\newtheorem{thm}{Theorem}[section]
\newtheorem{cor}[thm]{Corollary}
\newtheorem{lem}[thm]{Lemma}
\newtheorem{prop}[thm]{Proposition}
\theoremstyle{definition}
\newtheorem{defn}{Definition}[section]
\theoremstyle{remark}
\numberwithin{equation}{section}
\begin{document}

\title[$L_p$ John ellipsoids for log concave functions]{$L_p$ John ellipsoids for log-concave functions}

\author[ F. Chen,  J. Fang, M. Luo, C. Yang]{Fangwei Chen$^1$, Jianbo Fang$^1$, Miao Luo$^2$, Congli Yang$^2$}

\address{1. School of Mathematics and Statistics, Guizhou University of Finance and Economics,
Guiyang, Guizhou 550025, People's Republic of China.}
\email{cfw-yy@126.com}
\email{16995239@qq.com}

\address{2. School of Mathematical Sciences, Guizhou Normal
University, Guiyang, Guizhou 550025, People's Republic of China}
\email{lm975318@126.com}
\email{yangcongli@gznu.edu.cn}

\thanks{The  work is supported in part by CNSF (Grant No. 11561012, 11861004, 11861024), Guizhou  Foundation for Science and Technology (Grant
No. [2019] 1055, [2019]1228), Science and technology top talent support program of Guizhou Eduction Department (Grant No. [2017]069).}


\subjclass[2010]{52A20, 52A40, 52A38.}

\keywords{Log-concave functions;  Minkowski's first inequality; $L_p$ John ellipsoid; Ball's volume ratio}

\dedicatory{}



\begin{abstract}
The aim of this paper is to develop the $L_p$ John ellipsoid for the geometry of log-concave functions. Using the results of the $L_p$ Minkowski theory for log-concave function established in \cite{fan-xin-ye-geo2020}, we characterize the $L_p$ John ellipsoid for log-concave function, and establish some inequalities of the $L_p$ John  ellipsoid for log-concave function. Finally, the analog of Ball's volume ratio inequality for the $L_p$ John ellipsoid of log-concave function is established.
\end{abstract} \maketitle

\section{introduction}

Let $K$ be a convex body in $\mathbb R^n$, among all ellipsoids contained in $K$, there exists a unique ellipsoid $JK$
 with the maximum volume, this ellipsoid is called the John's ellipsoid of $K$. It plays an important role in convex geometry and Banach space geometry (see, e.g., \cite{bal-vol1991,bal-ell1992,gru-joh2011,kla-on2004,bal-vol1989,gia-mil-ext2000,lew-ell1979,pis-the1989}). One of the most important results concerning the John ellipsoid is the Ball's volume-ratio inequality, which states that: if $K$ is an origin symmetric convex body in $\mathbb R^n$, then
 \begin{align*}
  \frac{V(K)}{V(JK)}\leq \frac{2^n}{\omega_n},
\end{align*}
with equality if and only $K$ is a parallelotope. Here $V(K)$ denotes the $n$-dimensional volume and $\omega_n=\frac{\pi^\frac{n}{2}}{\Gamma(1+\frac{n}{2})}$ denotes the volume of a unit ball in $\mathbb R^n$.

In l990's, the $L_p$ Brunn-Minkowski theory was firstly initiated by Lutwak (see \cite{lut-the1993,lut-the1996}), during the last two decades, it has achieved great development and expanded rapidly (see, e.g., \cite{cho-wan-the2006,fle-gue-pao-a2007,hab-lut-yan-zha-the2010,hab-sch-gen2009,
lud-gen2010,lut-yan-zha-lp2000,wer-ren2012,wer-ye-new2008,sch-wer-sur2004,sta-the2002,sta-cen2012,pao-wer-rel2012}).
The $L_p$ extension of the John ellipsoid is given by Lutwak, Yang  and Zhang \cite{lut-yan-zha-lp2005}.

Given a smooth convex body $K\in \mathbb R^n$ that contains the origin in its interior. Let $f_p(K,\cdot)$  be the $L_p$ curvature function of $K$, $p>0$, find
\begin{align}
  \min_{\phi\in SL(n)}\int_{S^{n-1}}f_p(\phi K,\cdot)dS(u),
\end{align}
where $S^{n-1}$ denotes the unit sphere in $\mathbb R^n$. The minimum is actually attained at some $\phi_p\in SL(n)$, and defines an ellipsoid $E_pK$, which $\phi_p$ maps it into the unit ball $B$, that is, $\phi_p E_pK=B$. The ellipsoid is unique and is called the volume-normalized $L_p$ John ellipsoid of $K$. The equivalent ways to state the above problem is given by the following two Optimization Problems \cite{lut-yan-zha-lp2005}: Given a convex body $K$ in $\mathbb R^n$ that contains the origin in it interior, find an ellipsoid $E$, amongst all origin-centered ellipsoids, which solves the following constrained maximization problem:
\begin{align}
  \max\Big(\frac{V(E)}{\omega_n}\Big)^{\frac{1}{n}} \,\,\,\,\,\mbox{ subject to}\,\,\,\, \overline V_p(K,E)\leq 1.
\end{align}
A maximal ellipsoid will be called an $S_p$ solution for $K$. The dual problem is to find $E$ such that
\begin{align}
  \min\overline V_p(K,E) \,\,\,\,\,\mbox{ subject to}\,\,\,\, \Big(\frac{V(E)}{\omega_n}\Big)^{\frac{1}{n}}\geq 1.
\end{align}
A minimal ellipsoid will be called an $\overline S_p$ solution for $K$.
Where  $$\overline V_p(K,E)=\Big[\int_{S^{n-1}}\Big(\frac{h_E}{h_K}\Big)^pd\overline V_K\Big]^{\frac{1}{p}},\,\,\,\,\,\,\,\,\,p>0,$$
is the normalized $L_p$ mixed volume of $K$ and $E$. More details about the solution of the two problems $S_p$, $\overline S_p$ and related inequalities see \cite{lut-yan-zha-lp2005}. The Orlicz extension of the John ellipsoid is done by Zou and Xiong \cite{zou-xio-orl2014}.
Recently, the study of the geometry of log-concave functions in the field of convex geometry has emerged, with a quite natural idea is to replace the volume of a convex body by the integral of a log-concave function. To establish functional version of the problems from the convex geometric analysis of convex body has attracted a lot of authors interest (see, e.g., \cite{cag-fra-gue-leh-sch-wer-fun2016,cag-wer-div2014,cag-wer-mix2015,
cag-ye-aff2016,col-bru2005,col-fra-the2013,gut-a2019,art-flo-seg-fun2020,
alo-mer-jim-vil-joh2018,rot-sup2013,lin-aff2017,cal-ye-aff2016,alo-mer-jim-vil-rog2016,alo-ber-mer-an2020,
alo-a2019}).   Also an extension of the John ellipsoids to the case of log-concave functions has attracted a lot of authors interest, for example, in \cite{gut-mer-jim-vil-joh2018}, the authors extend the notion of John's ellipsoid to the setting of integrable log-concave functions and obtain integral ratio of a log-concave function and establish the reverse functional affine isoperimetric inequality. The extension of the LYZ ellipsoid to the log-concave functions is done by Fang and Zhou \cite{fan-zho-lyz2018}. The L\"{o}wner ellipsoid function for log-concave function is invested by Li, Sch\"utt and Werner \cite{li-sch-wer-the2019}. Extensive research has been devoted to extend the concepts and inequalities from convex bodies to the setting of log-concave functions (see, e.g., \cite{era-kla-mom2015,fra-mey-som2007}). In fact, it was observed that the Pr\'ekopa-Leindler inequality is the functional analog of the Brunn-Minkowski inequality (see e.g., \cite{gar-the2002,bra-lie-on1976,pre-new1975}) for convex bodies. Much progress has been made see \cite{avi-kla-mil-the2004,avi-kla-sch-wer-fun2012,avi-mil-the2009,
col-fun2006}.

Let $f$ be a log-concave functions of $\mathbb R^n$ such that
\begin{align*}
  f:\mathbb R^n\rightarrow \mathbb R, \,\,\,\,f=e^{-u},
\end{align*}
where $u: \mathbb R^n\rightarrow \mathbb R \cup\{+\infty\}$ is a convex function. We always consider in this paper that a log-concave function $f$ is integrable and such that $f$ is nondegenerate, i.e., the interior of the support of $f$ is non-empty, $int(suppf)\neq \emptyset$. This implies that $0< \int_{\mathbb R^n}fdx< \infty$.

Let $f=e^{-u}$, $g=e^{-v}$ be log-concave functions, for any real $\alpha,\,\,\beta>0$, the Asplund sum and scalar multiplication of two log-concave functions are defined as,
\begin{align}
  \alpha \cdot f \oplus \beta\cdot g:=e^{-w}, \,\,\,\,\,\,\,\,\mbox{ where}\,\,\, w^*=\alpha u^*+\beta v^*.
\end{align}
Here $w^*$ denotes as usual the Fenchel conjugate of the convex function $\omega$. Correspond to the volume $V(K)$ of a convex body $K$ in $\mathbb R^n$, the total mass $J(f)$ of a log-concave function $f$  in $\mathbb R^n$ is firstly considered in \cite{col-fra-the2013}. The functional counterpart of Minkowski's first inequality and related isoperimetric inequalities are established. The $L_p$ extension of the  Asplund sum and scalar multiplication of two log-concave functions are discussed in \cite{fan-xin-ye-geo2020}, the functional $L_p$ Minkowski's first inequality and the functional $L_p$ Minkowski problem also been discussed.

Our main goals in this paper are to discuss the functional $L_p$ John ellipsoid, based on the $L_p$ Asplund sum and $L_p$ scalar mutiplication of two log-concave functions. Owing to the functional $L_p$ Minkowski's first variation of $f$ and $g$, we focus on the following:

\textbf{Problem $S_p$.} Given a log-concave function $f\in\mathcal A_0,$ find a Gaussian function $\gamma_\phi $ which solves the following constrained maximization problem:
\begin{align}
  \max\Big(\frac{J(\gamma_\phi)}{c_n}\Big)\,\,\,\,\mbox{ subject to}\,\,\,\,\,\,   \overline\delta J_p\big(f,\gamma_\phi\big)\leq 1.
\end{align}
Where $c_n=(2\pi)^{\frac{n}{2}}$ and $\phi\in GL(n)$, $\gamma_\phi=e^{-\frac{\|\phi x\|^2}{2}}$ is the Gaussian function. $\overline\delta J_p\big(f,\gamma_\phi\big)$ is the normalized first variation of the total mass $J(f)$ with respect to the $L_p$ Asplund sum.

In section 3, we prove that there exists a unique Gaussian function which solves the Problem $S_p$. The unique Gaussian function which solves the problem $S_p$ is called the $L_p$ John ellipsoid for the log-concave function $f$, and denoted by $E_pf$. Moreover, we characterize a Gaussian function which is the solution of the problem $S_p$.

In section 4, we focus on the continuity of the $L_p$ John ellipsoid, we prove that the $L_p$ John ellipsoid $E_pf$ is continuous with respect to $f$ and $p$. By the $L_p$ Minkowski's first inequality for log-concave function, we prove that the total mass of the $L_p$ John ellipsoid $E_pf$ is no more than the total mass of $f$. In the end of this section,  we show the similar Ball's volume ration inequality is also holds for log-concave function.

\section{Preliminaries}

\subsection{Convex bodies}

In this paper, we work in $n$-dimensional Euclidean space, $\mathbb R^n$, endowed with the usual scalar product $\langle x, y\rangle$ and norm $\|x\|$. Let $B^n=\{x\in \mathbb R^n:\|x\|\leq 1\}$ denote the standard unit ball and $S^{n-1}=\{x\in \mathbb R^n:\|x\|= 1\}$ denote the unit sphere in $\mathbb R^n$.  Let $\mathcal K^n$ denote the class of convex bodies in $\mathbb R^n$, and $\mathcal K^n_o$  be the subclass of convex bodies $K$ whose relative interior $int(K)$ is nonempty.  For $i\leq n$, let $\mathcal H^i$ be the $i$-dimensional Hausdorff measure, we indicate by $V(K)=\mathcal H^n(K)$ the $n$-dimensional volume.

Let $h_K(\cdot):\mathbb R^{n}\rightarrow \mathbb R$ be the support function of $K$; i.e., for $x\in \mathbb R^{n}$,
$$h_K(x)=\max\big\{\langle x, y\rangle :y\in K\big\}.$$
 Let $n_K(x)$ be the unit outer normal at $x\in \partial K$, then $h_K(n_K(x))=\langle n_K(x), x\rangle.$
It is shown that the sublinear support function characterizes a convex body and, conversely, every sublinear function on $\mathbb R^n$ is the support function of a nonempty compact convex set. By the definition of the support function, if $\phi\in GL(n)$, then the support function of the image $\phi K:=\{\phi y:y\in K\}$ is given by
\begin{align*}
h_{\phi K}(x)=h_K(\phi^tx),
\end{align*}
where $\phi^t$ denotes the transpose of $\phi$. Let $K\in \mathcal K^n_o$ be a convex body that contains the origin in its interior, the polar body $K^\circ$ is defined by
\begin{align*}
  K^\circ=\big\{y\in \mathbb R^n:\langle y, x\rangle\leq 1, \,\,{\rm for \,\,\,all}\,\, x\in K\big\}.
\end{align*}
Obviously, for $\phi\in GL(n)$, then  $(\phi K)^\circ=\phi^{-t}K^\circ$.
The gauge function $\|\cdot\|_K$ is defined by
\begin{align*}
  \|x\|_K=\min\big\{a\geq 0: x\in \alpha K\big\}=\max_{y\in K^\circ}\langle x, y\rangle=h_{K^\circ}(x).
\end{align*}
It is clear that
\begin{align*}
  \|x\|_K=1 \,\,\,\,\,\,\,\, \mbox{whenever} \,\,\, \,\,x\in \partial K.
\end{align*}

Recall that the $L_p$ ($p\geq 1$) Minkowski combination of convex bodies $K$ and $L$ is defined as
\begin{align}
  h_{K+_p\epsilon \cdot L}(x)^p=h_K(x)^p+\epsilon h_L(x)^p.
\end{align}
One of the most important inequality related to  the $L_p$ Brunn-Minkowski combination of convex bodies $K$ and $L$ is
\begin{align*}
  V(K+_pL)^{\frac{p}{n}}\geq V(K)^{\frac{p}{n}}+V(L)^{\frac{p}{n}},
\end{align*}
with equality if and only if $K$ and $L$ are dilation of each other. The $L_p$ surface area  measure of $K$ is defined by
\begin{align}
  dS_p(K,\cdot)=h_K^{1-p}dS(K,\cdot),
\end{align}
where $dS(K,\cdot)$ is the classical surface area measure, which is given by
$$\lim_{\epsilon \rightarrow 0^+}\frac{V(K+\epsilon L)-V(K)}{\epsilon}=\int_{S^{n-1}}h_Q(u)dS(K,u).$$
It is easy to say that for $\lambda>0$, $S_p(\lambda K,\cdot)=\lambda^{n-p}S_p(K,\cdot)$. If $K\in \mathcal K^n_o$, then $K$ has a curvature function, then $f_p(K,\cdot):S^{n-1}\rightarrow \mathbb R$, the $L_p$-curvature function of $K$, is defined by
\begin{align*}
  f_p(K,\cdot)=h_K^{1-p}f(K,\cdot),
\end{align*}
where $f(K,\cdot)$ is the curvature function, $f(K,\cdot): S^{n-1}\rightarrow \mathbb R^n$ defined as the Radon-Nikodym derivative
$$f(K,\cdot)=\frac{dS(K,\cdot)}{dS},$$
and $dS$ is the standard Lebesgue measure on $S^{n-1}$.

For quick reference about the definition and notations in convex geometry, good references are Gardner \cite{gar-geo2006}, Gruber \cite{gru-con2007}, Schneider \cite{sch-con1993}.

\subsection{Functional setting}

In the following, we discuss in the functional setting in $\mathbb R^n$. Let $u:\mathbb R^n\rightarrow \mathbb R\cup\{+\infty\}$ be a convex function, that is $u\big((1-t)x+ty\big)\leq (1-t)u(x)+tu(y)$ for $t\in (0,1)$. We set
$dom(u)=\{x\in \mathbb R^n:u(x)\in \mathbb R\},$
by the convexity of $u$, $dom(u)$ is a convex set in $\mathbb R^n$. We say that $u$ is proper if $dom(u)\neq \emptyset$, and $u$ is of class $\mathcal C^2_+$ if it is twice differentiable on $int\big(dom(u)\big)$, with a positive definite Hessian matrix.
Recall that the Fenchel conjugate of $u$ is the convex function defined by
\begin{align}\label{leg-con}
  u^*(y)=\sup_{x\in\mathbb R^n}\big\{\langle x,y\rangle -u(x)\big\}.
\end{align}
It is obvious that $u(x)+u^*(y)\geq \langle x,y\rangle$ for $x, y\in \mathbb R^n$, there is an equality if and only if $x\in dom(u)$ and $y$ is in the subdifferential of $u$ at $x$, that means
\begin{align}\label{leg-equ}
  u^*(\nabla u(x))+u(x)=\langle x,\nabla u(x)\rangle.
\end{align}
The convex function $u:\mathbb R^n\rightarrow \mathbb R\cup\{+\infty\}$ is lower semi-continuous,  if the subset
$\{x\in \mathbb R^n: u(x)>t\}$ is an open set for any $t\in (-\infty,+\infty]$.
Moreover, if $u$ is a lower semi-continuous convex function, then also $u^*$ is a lower semi-continuous convex function, and $u^{**}=u$.


The infimal convolution of functions $u$ and $v$ from $\mathbb R^n$ to $\mathbb R\cup\{+\infty\}$ is defined by
\begin{align}\label{inf-con}
  u\Box v(x)=\inf_{y\in \mathbb R^n}\big\{u(x-y)+v(y)\big\}.
\end{align}
The right scalar multiplication by a nonnegative real number $\alpha$:
\begin{align}\label{rig-mul}
\big(u\alpha\big)(x):=\left\{
           \begin{array}{ll}
           \alpha u\big(\frac{x}{\alpha}\big), & \hbox{$if\,\, \alpha >0$;} \\
           I_{\{0\}}, & \hbox{$if\,\,\alpha=0$.}
           \end{array}
         \right.
\end{align}

The following results below gather some elementary properties of $u$, the Fenchel conjugate and the infimal convolution, which can be found in \cite{col-fra-the2013,roc-con1970}.
\begin{lem}\label{con-for-u}
  Let $u:\mathbb R^n\rightarrow \mathbb R\cup\{+\infty\}$, then there exist constants $a$ and $b$, with $a>0$, such that, for $\forall x\in \mathbb R^n$
\begin{align}
  u(x)\geq a\|x\|+b.
\end{align}
Moreover $u^*$ is proper, and satisfies $u^*(y)>-\infty$, $\forall y\in\mathbb R^n$.
\end{lem}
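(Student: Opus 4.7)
The plan is to read the lemma with the paper's standing hypotheses in force: $u$ is proper, lower semicontinuous, and convex, and $f = e^{-u}$ is an integrable nondegenerate log-concave function on $\mathbb R^n$. Under these assumptions the linear lower bound $u(x) \geq a\|x\|+b$ is the core integrability-implies-linear-growth statement, and properness of $u^*$ (together with $u^*(y) > -\infty$ everywhere) will follow at once from that bound.

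First I would show that $u$ is bounded below. An integrable log-concave function is automatically bounded: if $f(x_k) \to \infty$ along some sequence $x_k$, then log-concavity together with $0 \in \mathrm{int}(\mathrm{supp}\, f)$ (after a harmless translation) produces a ray along which $f$ stays bounded below by a positive constant out to infinity, contradicting $\int f < \infty$. Hence $f \leq M$, i.e., $u \geq -\log M$.

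Next I would control the sublevel sets of $u$. Chebyshev gives $|\{u \leq R\}| \leq e^R \int e^{-u} < \infty$; since each sublevel set is convex with finite volume, it is bounded. Pick $R_0$ large enough that $A_{R_0} := \{u \leq R_0\}$ contains a ball $\rho B^n$ around the origin (possible by nondegeneracy). For every direction $v \in S^{n-1}$, the convex function $\varphi_v(t) := u(tv)$ is then bounded on $[0,\rho]$ and, by Chebyshev applied at higher levels, must exit each sublevel set $A_R$ by a time that is uniformly bounded in $v$. Convexity of $\varphi_v$ will then force $\varphi_v(t) \geq at + b$ with constants $a>0$, $b\in\mathbb R$ independent of $v$; substituting $v = x/\|x\|$ and $t = \|x\|$ yields the claimed inequality for all $x$.

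Finally, from $u(x) \geq a\|x\| + b$, any $y$ with $\|y\| < a$ satisfies $\langle y,x\rangle - u(x) \leq (\|y\|-a)\|x\| - b$, which is bounded above in $x$, giving $u^*(y) < \infty$; and for arbitrary $y$, taking any $x_0 \in \mathrm{dom}(u)$ gives $u^*(y) \geq \langle y,x_0\rangle - u(x_0) > -\infty$. Hence $u^*$ is proper. The main obstacle lies in the third step, namely upgrading "bounded sublevel sets" to a single slope $a>0$ that works uniformly over all directions in $S^{n-1}$; I expect to dispatch this either via the Minkowski functionals of the family $\{A_R\}$ or by a compactness argument on $S^{n-1}$ exploiting upper semicontinuity of the exit time $t_1(v) := \inf\{t>0 : u(tv) > R_0+1\}$.
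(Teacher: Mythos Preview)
The paper does not supply its own proof of this lemma; it simply records the statement and defers to Colesanti--Fragal\`a and Rockafellar. In those sources the operative hypothesis is membership in the class $\mathcal L$ (defined a few lines later in the paper): $u$ is proper, convex, and \emph{coercive}, i.e.\ $u(x)\to+\infty$ as $\|x\|\to\infty$. With coercivity taken as an assumption, the argument is short: one nonempty sublevel set $\{u\le R_0\}$ is automatically bounded, say contained in $M B^n$; then $u(tv)>R_0$ for every unit vector $v$ and every $t>M$, and one-variable convexity along each ray through the origin gives $u(x)\ge \|x\|/M + b$ with constants independent of the direction.

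Your proposal is correct, but it takes a somewhat longer route because you start from integrability of $e^{-u}$ rather than from coercivity. Your Chebyshev step recovers precisely what the class $\mathcal L$ assumes outright---bounded sublevel sets---and from that point on your plan is the standard one. The ``main obstacle'' you flag, namely uniformity of the slope over $S^{n-1}$, dissolves once you observe that a \emph{single} bounded sublevel set $\{u\le R_0\}\subset M B^n$ already furnishes a direction-independent radius $M$ and hence a direction-independent slope $a=1/M$; there is no need for a separate compactness or semicontinuity argument on the sphere. Your final paragraph on $u^*$ is correct as written.
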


\begin{prop}
  Let $u:\mathbb R^n\rightarrow \mathbb R\cup\{+\infty\}$ be a convex function. Then:

\normalfont (1) $\big(u\Box v\big)^*=u^*+v^*$;

 \normalfont (2) $(u\alpha)^*(x)=\alpha u^*(\frac{x}{\alpha}), \,\,\,\,\,\alpha>0$;

\normalfont (3) $dom(u\Box v)=dom(u)+dom(v)$;

\normalfont (4) it holds $u^*(0)=-\inf(u)$, in particular if $u$ is proper, then $u^*(y)>-\infty$; $\inf(u)>-\infty$ implies $u^*$ is proper.
            \end{prop}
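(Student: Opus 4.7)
The plan is to verify each of the four items by direct manipulation of the definitions of the Fenchel conjugate \eqref{leg-con}, the infimal convolution \eqref{inf-con}, and the right scalar multiplication \eqref{rig-mul}. Each assertion is a classical identity of convex analysis and follows from careful bookkeeping with $\sup/\inf$ exchanges and linear substitutions; the only delicate point throughout is handling the extended-real values $\pm\infty$ consistently.

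For (1), I would begin from
\[
(u\Box v)^*(z) = \sup_x\Big\{\langle x,z\rangle - \inf_y[u(x-y)+v(y)]\Big\} = \sup_{x,y}\Big\{\langle x,z\rangle - u(x-y) - v(y)\Big\},
\]
using that a supremum of a quantity minus an infimum equals a joint supremum. The crucial step is the change of variable $w = x - y$, which separates the joint supremum into $u^*(z) + v^*(z)$. For (2), the same change-of-variable technique applies to $(u\alpha)^*(y) = \sup_x\{\langle x,y\rangle - \alpha u(x/\alpha)\}$; substituting $z = x/\alpha$ (legitimate since $\alpha > 0$) and pulling out the factor of $\alpha$ produces the stated formula in one line. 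For (3), the inclusion $dom(u) + dom(v) \subseteq dom(u\Box v)$ is immediate by plugging $y = b$ into the infimum when $x = a+b$ with $a \in dom(u)$ and $b \in dom(v)$; the reverse inclusion uses that a finite infimum forces the existence of some $y_0$ with $u(x-y_0) + v(y_0) < \infty$, hence both terms finite, giving the decomposition $x = (x-y_0) + y_0$.

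For (4), $u^*(0) = \sup_x\{-u(x)\} = -\inf u$ is immediate from the definition. The inequality $u^*(y) \geq \langle x_0,y\rangle - u(x_0)$, valid for any $x_0 \in dom(u)$, then shows $u^*(y) > -\infty$ everywhere when $u$ is proper; and coupling $u^*(0) = -\inf u < +\infty$ under the hypothesis $\inf u > -\infty$ with this bound yields that $u^*$ is proper. The principal obstacle throughout is conceptual rather than technical: one must keep the $\pm\infty$ conventions straight, particularly when justifying the $\sup/\inf$ exchange in (1) and when turning a finite infimum into a finite value at a specific $y_0$ in (3). I would dispose of these by systematically invoking the conventions $\sup\emptyset = -\infty$ and $\inf\emptyset = +\infty$, and by observing that whenever the infimum in the definition of $u\Box v$ equals $+\infty$, the corresponding $x$ lies outside $dom(u\Box v)$, so no ambiguity of the form $\infty - \infty$ ever arises.
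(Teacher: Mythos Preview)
The paper does not supply a proof of this proposition; the text preceding it simply states that these are elementary properties ``which can be found in \cite{col-fra-the2013,roc-con1970}.'' Your direct verification from the definitions of the Fenchel conjugate, infimal convolution, and right scalar multiplication is precisely the standard argument in those references and is correct as outlined.

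One point worth flagging: in item (2), your substitution $z=x/\alpha$ in
\[
(u\alpha)^*(y)=\sup_{x}\big\{\langle x,y\rangle-\alpha u(x/\alpha)\big\}
=\sup_{z}\big\{\alpha\langle z,y\rangle-\alpha u(z)\big\}
\]
gives $\alpha u^*(y)$, not $\alpha u^*(y/\alpha)$ as printed in the statement. The identity $(u\alpha)^*=\alpha u^*$ is indeed what appears in \cite{col-fra-the2013}, so the printed formula here is a typo; your computation recovers the correct result rather than the stated one.
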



A function $f: \mathbb R^n\rightarrow \mathbb R\cup\{+\infty\}$ is called log-concave if for $x, y\in \mathbb R^n$ and $0<t<1$, we have
\begin{align*}
  f\big((1-t)x+ty \big)\geq f^{1-t}(x)f^t(y).
\end{align*}
If $f$ is a strictly positive log-concave function on $\mathbb R^n$, then there exist a convex function $u:\mathbb R^n\rightarrow \mathbb R\cup\{+\infty\}$ such that $f=e^{-u}$.
Following the notations in paper \cite{{col-fra-the2013}}, let
$$\mathcal L=\big\{u:\mathbb R^n\rightarrow \mathbb R^n\cup\{+\infty\}| \mbox{ proper,  convex,} \lim\limits_{|x|\rightarrow +\infty}u(x)=+\infty\}$$
$$\mathcal A=\big\{f:\mathbb R^n\rightarrow \mathbb R|\,\,f=e^{-u}, u\in \mathcal L\big\}.$$

Let $f\in \mathcal A$ be a log-concave, according to a series of papers by Artstein-Avidan and Milman \cite{avi-mil-a2010}, Rotem \cite{rot-on2012},  the support function of $f=e^{-u}$ is defined as,
\begin{align}
  h_f(x)=(-\log f(x))^*=u^*(x).
\end{align}
Here the $u^*$ is the Fenchel conjugate of $u$. The definition of $h_f$ is a proper generalization of the support function $h_K$, in fact, one can easily checks $h_{\chi_K}=h_K$.
Obviously, the support function $h_f$ shares the most of the important properties of  $h_K$.

The polar function of $f=e^{-u}$ is defined by
$f^\circ=e^{-u^*}.$ Specifically,
$$f^\circ(y)=\inf_{x\in \mathbb R^n}\Big\{\frac{e^{-\langle x,y\rangle}}{f(x)}\Big\},$$
it follows that, $f^\circ$ is also a log-concave function.

Let $\phi\in GL(n)$,  we always write $f\circ \phi(x)=\phi f(x)=f(\phi x)$.  The following proposition shows that $h_f$ is $GL(n)$ covariant which is proved in \cite{fan-zho-lyz2018}.
\begin{prop}\label{sln-invar}
  Let $f\in \mathcal A$. For $\phi\in GL(n)$ and $x\in \mathbb R^n$, then
  \begin{align*}
    h_{\phi f}(x)=h_f(\phi^{-t}x).
\end{align*}
Moreover, for the polar  function of $f$,
\begin{align*}
  (\phi f)^\circ=\phi^{-t}f^\circ.
     \end{align*}
\end{prop}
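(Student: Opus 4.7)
The plan is to reduce both claims to a single computation: the Fenchel conjugate of a composition with a linear map. Write $f = e^{-u}$ with $u \in \mathcal{L}$, so that $\phi f = f \circ \phi = e^{-u \circ \phi}$. By the definition of the support function of a log-concave function, $h_{\phi f}(x) = (u \circ \phi)^*(x)$, and everything will follow if I can show $(u \circ \phi)^*(x) = u^*(\phi^{-t} x)$.

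For this identity, I would unfold the Fenchel conjugate directly using \eqref{leg-con}:
\begin{align*}
(u \circ \phi)^*(x) = \sup_{y \in \mathbb{R}^n} \bigl\{ \langle x, y \rangle - u(\phi y) \bigr\}.
\end{align*}
Then change variables via $z = \phi y$, which is legitimate since $\phi \in GL(n)$ is a bijection on $\mathbb{R}^n$, and use the elementary adjoint identity $\langle x, \phi^{-1} z \rangle = \langle \phi^{-t} x, z \rangle$. The supremum becomes $\sup_{z} \{\langle \phi^{-t} x, z \rangle - u(z)\} = u^*(\phi^{-t} x)$, which is $h_f(\phi^{-t} x)$ by definition. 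This gives the first assertion.

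For the polar function statement, I would simply exponentiate. Since $f^\circ = e^{-u^*}$ and $\phi f = e^{-(u \circ \phi)}$, the first part already gives
\begin{align*}
(\phi f)^\circ(x) = e^{-(u \circ \phi)^*(x)} = e^{-u^*(\phi^{-t} x)} = f^\circ(\phi^{-t} x),
\end{align*}
which is precisely the action of $\phi^{-t}$ on $f^\circ$ in the notation $\phi f(x) = f(\phi x)$. So the second identity is really just a restatement of the first in the language of polar functions.

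The only subtle point, and the one I would flag, is verifying that $u \circ \phi \in \mathcal{L}$ (so that its Fenchel conjugate behaves as expected and $\phi f \in \mathcal{A}$). This is immediate: $\phi$ is a linear isomorphism, hence composition with $\phi$ preserves convexity and properness of $u$, and also preserves the coercivity condition $\lim_{|x| \to \infty} u(x) = +\infty$ because $|\phi y| \to \infty$ iff $|y| \to \infty$. With that observation in place, the proof is genuinely just a one-line change-of-variables computation, with no real obstacle.
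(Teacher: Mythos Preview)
Your proof is correct and is exactly the standard argument: the Fenchel-conjugate change-of-variables $(u\circ\phi)^*(x)=u^*(\phi^{-t}x)$, followed by exponentiation for the polar statement. Note that the paper does not actually supply its own proof of this proposition; it simply attributes the result to \cite{fan-zho-lyz2018}. Your write-up is precisely the computation one would expect to find there, so there is nothing substantive to compare.
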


The class of log-concave functions $\mathcal A$ can be endowed with an algebraic structure which extends in a natural way as the usual of the Minkowski's structure on $\mathcal K^n$. For example, the Asplund sum of two log-concave functions is corresponded to the classical Minkowski sum of two convex bodies. See \cite{col-fra-the2013} for more about the Asplund sum and the related inequalities of the total mass of the log-concave function in $\mathcal A$ which correspond to the convex  bodies in $\mathcal K^n$. In very recently, the $L_p$ Asplund sum of log-concave functions are studied by author Fang, Xing and Ye \cite{fan-xin-ye-geo2020}. Let $$\mathcal A_0=\{e^{-u}:u\in\mathcal L_0\}\subset \mathcal A$$
 with
$$\mathcal L_0=\big\{u\in \mathcal L: u\geq 0, (u^*)^*=u  \mbox{ and }  u(o)=0\big \}.$$
Clearly, if $u\in\mathcal L_0$, then $u$ has its minimum attained at the origin $o$.
\begin{defn}[\cite{fan-xin-ye-geo2020}]\label{def-lp-asp-sum}
Let $f=e^{-u}$, $g=e^{-v}\in \mathcal A_0$, and $\alpha, \beta\geq 0$. The $L_p$ ($p\geq 1$) Asplund sum and multiplication of $f$ and $g$ is defined as
\begin{align}
  \alpha\cdot_p f\oplus_p \beta\cdot_p g=e^{-[(u\cdot_p\alpha)\Box_p(v\cdot_p\beta)]},
\end{align}
where
\begin{align*}
  (u\cdot_p\alpha)\Box_p(v\cdot_p\beta)=\Big[\big(\alpha (u^*)^p+\beta(v^*)^p\big)^{\frac{1}{p}}\Big]^*.
\end{align*}
\end{defn}
The $L_p$ Asplund sum is an extension of the Asplund sum on $\mathcal A_0$. Specially, when $p=1$, it reduces to the  Asplund sum of two functions on $\mathcal A_0$, that is
\begin{align}\label{f-plu-g}
\big(\alpha\cdot f\oplus \beta\cdot g\big)(x)=\sup_{y\in\mathbb R^n}f\Big(\frac{x-y}{\alpha}\Big)^\alpha g\Big(\frac{y}{\beta}\Big)^\beta.
\end{align}
Moreover, when $\alpha=0$ and $\beta>0$, we have $(\alpha\cdot_p f\oplus_p\beta\cdot_p g)(x)=g\big(\frac{x}{\beta^{\frac{1}{p}}}\big)^{\beta^{\frac{1}{p}}}$; when $\alpha >0$ and $\beta=0$, then $(\alpha\cdot_p f\oplus \beta\cdot_p g)(x)=f\big(\frac{x}{\alpha^{\frac{1}{p}}}\big)^{\alpha^{\frac{1}{p}}}$;  finally, when $\alpha=\beta=0$, we set  $\big(\alpha\cdot_p f\oplus_p \beta\cdot_p g\big)=I_{\{0\}}$. We say that the $L_p$ Asplund sum for log-concave functions is closely related to the $L_p$ Minkowski sum for convex bodies in $\mathbb R^n$. For examples, $K, \,\,L\in\mathcal K^n$, let
\begin{align}\label{chi-fun}
  \chi_K(x)=e^{-I_K(x)}=\left\{
                                 \begin{array}{ll}
                                   1, & \hbox{if \,\, $x\in K$;} \\
                                   0, & \hbox{if \,\, $x\notin K$,}
                                 \end{array}
                               \right.
\end{align}
where $I_K$ is the indicator function of $K$, and it is a lower semi-continuous convex function,
\begin{align}\label{ind-fun}
I_K(x)=\left\{
               \begin{array}{ll}
                 0, & \hbox{if \,\, $x\in K$;} \\
                 \infty, & \hbox{if \,\, $x\notin K$.}
               \end{array}
             \right.
\end{align}
The characteristic function $\chi_K$ is log-concave functions with $u=I_K$ belongs to $\mathcal L$, $u^*=h_K$ belongs to $\mathcal L$ if $0\in int(K)$, for  $p\in [1,+\infty)$, the function
\begin{align*}
  \big((I_K)\cdot_p\alpha\big)\Box_p\big((I_L)\cdot_p\beta\big)
  &=\Big[\big(\alpha(I_K^*)^p+\beta (I_L^*)^p\big)^{\frac{1}{p}}\Big]^*\\
  &=\Big[\big(\alpha h_K^p+\beta h_L^p\big)^{\frac{1}{p}}\Big]^*\\
  &=I_{\alpha\cdot_pK+_p\beta\cdot_p L}.
\end{align*}
Then  $\alpha\cdot \chi_K\oplus \beta\cdot \chi_L=e^{-[I_K\cdot_p\alpha\Box_p I_L\cdot_p\beta ]}=\chi_{\alpha\cdot_p K+_p\beta\cdot_pL}.$

The following Proposition assert that the $L_p$ Asplund sum of log-concave functions is closed in $\mathcal A_0$.

\begin{prop}[\cite{fan-xin-ye-geo2020}]\label{clo-uv}
Let $f$ and $g$ belong both to the same class $\mathcal A_0$, and $\alpha,\,\,\beta\geq 0$. Then $f\cdot_p\alpha \oplus_p \beta\cdot_p g$  belongs to  $\mathcal A_0$.
\end{prop}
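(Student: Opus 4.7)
The plan is to verify that the exponent
\[
w \;=\; \bigl[(\alpha(u^*)^p+\beta(v^*)^p)^{1/p}\bigr]^*
\]
lies in $\mathcal L_0$, i.e., that it is a proper, lower semi-continuous convex function, nonnegative, vanishing at the origin, coercive, and equal to its own biconjugate. I would first dispose of the degenerate cases $\alpha=0$ or $\beta=0$ by direct inspection of the recalled formulas for $\alpha \cdot_p f \oplus_p \beta \cdot_p g$ in those ranges (each reduces to a rescaling of $u$ or $v$, from which the five defining properties of $\mathcal L_0$ are inherited). Hence the interesting case is $\alpha,\beta>0$; set $F=(\alpha(u^*)^p+\beta(v^*)^p)^{1/p}$ so that $w=F^*$.

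Step one is to show that $F$ is a well-behaved convex function. Since $u,v\in\mathcal L_0$ satisfy $u,v\geq 0$ with minimum $0$ attained at the origin, duality yields $u^*,v^*\geq 0$ with $u^*(o)=v^*(o)=0$; hence $F\geq 0$ and $F(o)=0$. Convexity of $F$ follows from three ingredients: convexity of $u^*$ and $v^*$, monotonicity of $s\mapsto s^p$ on $[0,\infty)$ (using $p\geq 1$), and Minkowski's inequality for the $\ell^p$ norm on $\mathbb{R}^2$ applied to the vectors $(\alpha^{1/p}u^*(x),\beta^{1/p}v^*(x))$ and $(\alpha^{1/p}u^*(y),\beta^{1/p}v^*(y))$. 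Lemma~\ref{con-for-u} applied to $u$ and $v$ produces linear minorants $u(x)\geq a\|x\|+b$ (and similarly for $v$), which forces $u^*$ and $v^*$ to be finite on a small ball around the origin; therefore $F$ is finite on a neighborhood of $o$ and, being convex there, is locally bounded by some constant $M$ on a ball $B_r(o)$.

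Step two is to read off from $F$ the required properties of $w=F^*$. As a Fenchel conjugate, $w$ is automatically convex and lower semi-continuous. From $F\geq 0$ and $F(o)=0$ we get $w(o)=-\inf F=0$, and setting $y=o$ in the supremum defining $w(x)$ gives $w\geq 0$. Properness follows from $w(o)=0$ together with $w>-\infty$. Coercivity comes from the local bound on $F$: for every $x$,
\[
w(x) \;\geq\; \sup_{\|y\|\leq r}\bigl(\langle x,y\rangle - F(y)\bigr) \;\geq\; r\|x\| - M,
\]
so $w(x)\to+\infty$ as $\|x\|\to\infty$. Finally, the involution $(w^*)^*=w$ is the Fenchel--Moreau theorem applied to the proper lower semi-continuous convex function $w$. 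Assembling these pieces yields $w\in\mathcal L_0$, hence $\alpha\cdot_p f\oplus_p\beta\cdot_p g\in\mathcal A_0$.

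The only non-routine step is the convexity of $F$; the nonlinear operation $s\mapsto s^{1/p}$ generally destroys convexity, and what rescues it is Minkowski's inequality, which demands precisely the nonnegativity of $u^*$ and $v^*$ provided by the hypothesis $u,v\in\mathcal L_0$. The coercivity of $w$ is the second place where the structure of $\mathcal L_0$ is genuinely used, via Lemma~\ref{con-for-u}: without the linear minorant, $F$ could fail to be locally bounded near the origin and $w$ could fail to be coercive. Everything else is standard convex-analytic bookkeeping.
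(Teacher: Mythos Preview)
The paper does not supply a proof of this proposition at all; it is quoted verbatim from \cite{fan-xin-ye-geo2020} and stated without argument. There is therefore no in-paper proof to compare your proposal against.

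Your argument is a correct self-contained verification. The key points---nonnegativity and vanishing at the origin of $u^*,v^*$ (from $u,v\in\mathcal L_0$), convexity of $F$ via Minkowski's inequality for the $\ell^p$ norm on nonnegative pairs, local boundedness of $F$ near the origin from Lemma~\ref{con-for-u}, and then the standard Fenchel--Moreau bookkeeping for $w=F^*$---are all sound and cover exactly the five conditions defining $\mathcal L_0$. One small remark: when you invoke Lemma~\ref{con-for-u} to get $u^*$ finite near the origin, you are implicitly using that $u\in\mathcal L$ (the lemma as written in the paper is stated for such $u$); this is of course guaranteed since $\mathcal L_0\subset\mathcal L$, but it is worth making the dependence explicit.
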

The total mass function of $f$ is defined as
\begin{align}
  J(f)=\int_{\mathbb R^n}f(x)dx.
\end{align}
Clearly, when $f=\chi_K$, $J(f)=V(K)$.
Similar to the integral expression of mixed volume $V(K,L)$, for $f=e^{-u}$ and $g=e^{-v}$ in $\mathcal A_0$, the quantity $\delta J (f,g)$, which is called as the first variation of $J$ at $f$ along $g$ is defined by (see \cite{col-fra-the2013})
\begin{align*}
\delta J(f,g)=\lim_{t\rightarrow 0^+}\frac{J(f\oplus t\cdot g)-J(f)}{t}.
\end{align*}
It has been shown that $ \delta J(f,g)$ has the following integral expression,
\begin{align}\label{mix-que-exp}
\delta J(f,g)=\int_{\mathbb R^n}h_gd\mu(f,x),
\end{align}
where $\mu(f,x)$ is the surface area measure of $f$ on $\mathbb R^n$ and is given by
\begin{align}
  u(f,x)=(\nabla u(x))_{\sharp}f(\mathcal H^n),
\end{align}
here $\nabla u$ is the gradient of $u$ in $\mathbb R^n$, that means, for any Borel function $g\in \mathcal A$,
\begin{align}
  \int_{\mathbb R^n}g(x)d\mu(f,x)=\int_{\mathbb R^n}g(\nabla u(x))e^{-u(x)}dx.
\end{align}
Specially, if take $f=g$ in (\ref{mix-que-exp}), then
\begin{align}
  \delta J(f,f)=nJ(f)+\int_{\mathbb R^n}f\log fdx=J(nf+f\log f).
\end{align}
In the following sections we write $J(nf+f\log f)$ in terms of $J(f^\diamond)$ for simplicity.

The $L_p$ surface area measure of $f$, denoted by $\mu_p(f,\cdot)$ is given in \cite{fan-xin-ye-geo2020}.

\begin{defn}[\cite{fan-xin-ye-geo2020}]
Let $f=e^{-u}\in\mathcal A_0$ be a log-concave function, the $L_p$ surface area measure of $f$, denoted as $\mu_p(f,\cdot)$, is the Boreal measure on $\Omega$ such that
\begin{align}
  \int_{\Omega}g(y)d\mu_p(f,y)=\int_{\{x\in dom(u): \nabla u(x)\in\Omega\}}g(\nabla u(x))(h_f(\nabla u(x)))^{1-p}f(x)dx,
\end{align}
holds for every Borel function $g$ such that $g\in L^1\big(\mu_p(f,\cdot)\big)$.
\end{defn}
Similarly, the first variation of the total mass at $f$ along $g$ with respect to the $L_p$ Asplund sum is defined as,
\begin{defn}[\cite{fan-xin-ye-geo2020}]\label{def-p-mix}
  Let $f,\,g\in\mathcal A_0$. For $p\geq 1$, the first variation of the total mass of $f$ along $g$ with respect to the $L_p$ Asplund sum is defined by
  \begin{align}
    \delta J_p(f,g)=\lim_{t\rightarrow 0^+}\frac{J(f\oplus_p t\cdot_pg)-J(f)}{t},
  \end{align}
  whenever the limit exists.
\end{defn}
The following integral expression of $\delta J_p(f,g)$ is $L_p$ extension of (\ref{mix-que-exp}) which is  established in \cite{fan-xin-ye-geo2020}. Specially, if take $f=e^{-I_K(x)}$ and $g=e^{-I_L(x)}$, where $I_K$ and $I_L$ are the indicator function of $K$ and $L$. So $J(f)=V(K)$ and $J(g)=V(L)$, then we have $\delta J(f,g)=V_p(K,L)$.

\begin{thm}[\cite{fan-xin-ye-geo2020}]
  Let $f=e^{-u}\in \mathcal A_0$ and $g=e^{-v}\in\mathcal A_0$. For $p\geq 1$, assume that $g$ is an admissible $p$-perturbation for $f$. In addition, suppose that there exists a constant $k>0$ such that
  \begin{align*}
    det\big(\nabla^2(h_f)^p\big)\leq k(h_f)^{n(p-1)}det(\nabla^2h_f),
  \end{align*}
  holds for all $x\in\mathbb R^n\backslash\{o\}$. Then
  \begin{align}\label{fir-var-jf}
    \delta J_p(f,g)=\frac{1}{p}\int_{\mathbb R^n}(h_g)^pd\mu_p(f,x).
  \end{align}
\end{thm}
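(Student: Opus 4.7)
The plan is to work on the support-function side via Fenchel duality: represent $J(f\oplus_p t\cdot_p g)$ as an integral over $y\in\mathbb R^n$ parametrized by $t$, differentiate under the integral at $t=0$, and recognize the resulting expression as the surface-integral in (\ref{fir-var-jf}). From the definition of the $L_p$ Asplund sum, writing $f\oplus_p t\cdot_p g=e^{-w_t}$ gives $w_t^*=\tilde h_t:=(h_f^p+t\,h_g^p)^{1/p}$, so $\tilde h_0=h_f$ and $\phi(y):=\partial_t\tilde h_t(y)|_{t=0}=\frac{h_g(y)^p}{p\,h_f(y)^{p-1}}$. The change of variables $x=\nabla\tilde h_t(y)$ (Jacobian $\det(\nabla^2\tilde h_t(y))$) together with the Fenchel equality $w_t(x)=\langle y,\nabla\tilde h_t(y)\rangle-\tilde h_t(y)$ then recasts the total mass as
\begin{equation*}
J(f\oplus_p t\cdot_p g)=\int_{\mathbb R^n} e^{\tilde h_t(y)-\langle y,\nabla\tilde h_t(y)\rangle}\det(\nabla^2\tilde h_t(y))\,dy.
\end{equation*}

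Differentiating the integrand at $t=0$ gives two contributions,
\begin{equation*}
e^{h_f-\langle y,\nabla h_f\rangle}\bigl(\phi-\langle y,\nabla\phi\rangle\bigr)\det(\nabla^2 h_f)\;+\;e^{h_f-\langle y,\nabla h_f\rangle}\,\mathrm{tr}\bigl(\mathrm{cof}(\nabla^2 h_f)\,\nabla^2\phi\bigr).
\end{equation*}
The second term I would handle by integration by parts, invoking the Piola identity that the rows of the cofactor matrix of any Hessian are divergence-free; boundary terms vanish because $f\in\mathcal A_0$ forces superlinear decay of $e^{-u}$. After the IBP the cofactor contracts against $\nabla^2 h_f$ via the identity $(\nabla^2 h_f)_{ki}\,\mathrm{cof}(\nabla^2 h_f)_{ij}=\det(\nabla^2 h_f)\,\delta_{kj}$, producing exactly $\int e^{h_f-\langle y,\nabla h_f\rangle}\langle y,\nabla\phi\rangle\det(\nabla^2 h_f)\,dy$, which cancels the $-\langle y,\nabla\phi\rangle$ piece inside the first term.

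What survives is $\int\phi(y)\,e^{h_f(y)-\langle y,\nabla h_f(y)\rangle}\det(\nabla^2 h_f(y))\,dy$. Reversing the change of variables via $x=\nabla h_f(y)$ (so $y=\nabla u(x)$ and $h_f(y)-\langle y,\nabla h_f(y)\rangle=-u(x)$) rewrites this as $\int\phi(\nabla u(x))e^{-u(x)}\,dx=\frac{1}{p}\int h_g(\nabla u(x))^p\,h_f(\nabla u(x))^{1-p}e^{-u(x)}\,dx$, which by the defining formula of $\mu_p(f,\cdot)$ is $\frac{1}{p}\int h_g^p\,d\mu_p(f,\cdot)$. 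The step I expect to be the hardest is the exchange of limit and integral: the pointwise derivative is a direct calculation, but the dominated-convergence majorant that is uniform in small $t$ is precisely what the two standing hypotheses supply — the $p$-admissibility of $g$ controls the difference quotient of $\tilde h_t$ near $t=0$, while the assumption $\det(\nabla^2(h_f)^p)\leq k(h_f)^{n(p-1)}\det(\nabla^2 h_f)$ bounds $\det(\nabla^2\tilde h_t)$ by an integrable multiple of $\det(\nabla^2 h_f)$, legitimizing both the differentiation under the integral and the boundary-vanishing in the Piola integration by parts.
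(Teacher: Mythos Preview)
The paper does not prove this theorem. It is quoted verbatim from \cite{fan-xin-ye-geo2020} (Fang, Xing and Ye), with a citation in the theorem heading; no argument is given and the result is used as a black box. So there is nothing here to compare your proposal against.

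Your outline is nonetheless a sensible reconstruction and is essentially the approach one expects in the cited reference, extending the Colesanti--Fragal\`a argument for $p=1$ to general $p$: pass to the dual side where the $L_p$ Asplund sum linearizes, write $J$ as a Monge--Amp\`ere integral in $y$ via the change of variables $x=\nabla\tilde h_t(y)$, differentiate in $t$, and use the divergence-free property of cofactor rows to eliminate the $\nabla^2\phi$ contribution. Your identification of the surviving term with $\frac{1}{p}\int h_g^p\,d\mu_p(f,\cdot)$ is correct.

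Two small points you should tighten if you want this to stand on its own. First, the integration by parts is in the $y$-variable, not the $x$-variable; the vanishing of boundary terms is not ``superlinear decay of $e^{-u}$'' per se but decay of $e^{h_f(y)-\langle y,\nabla h_f(y)\rangle}\,\mathrm{cof}(\nabla^2 h_f)\,\nabla\phi$ at infinity in $y$, which needs the hypotheses more explicitly. Second, for $f\in\mathcal A_0$ one has $h_f(o)=0$, so $\phi(y)=h_g(y)^p/(p\,h_f(y)^{p-1})$ is singular at the origin when $p>1$; this is why the paper's definition of $\mu_p(f,\cdot)$ lives on $\Omega$ rather than all of $\mathbb R^n$, and your change of variables and differentiation should be carried out on $\mathbb R^n\setminus\{o\}$ with the origin handled separately.
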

Note that  the support function of  the log-concave function is nondecreasing, it's easy to get  that if $g_1\leq g_2$, then $$\delta J_p(f,g_1)\leq \delta J_p(f,g_2).$$

In the following, we normalize the  $\delta J_p(f,g)$. For $f=e^{-u},\,\,g=e^{-v}\in \mathcal A_0$, and $1\leq p<\infty$, we define
\begin{align}\label{nor-det}
  \overline\delta J_p(f,g)&=\Big(\frac{p\cdot\delta J_p(f,g)}{J(f^\diamond)}\Big)^{\frac{1}{p}}
 =\Big[\frac{1}{J(f^\diamond)}\int_{\mathbb R^n}\Big(\frac{h_g}{h_f}\Big)^ph_fd\mu(f,x)\Big]^{\frac{1}{p}},
\end{align}
Note that $\frac{h_fd\mu(f,x)}{J(f^\diamond)}$ is a probability measure on $\mathbb R^n$.
For $p=\infty$ define
\begin{align}\label{inf-delta-p}
  \overline\delta J_\infty(f,g)=\max\Big\{\frac{h_g(x)}{h_f(x)}: x\in\mathbb R^n\Big\}.
\end{align}
Unless $\frac{h_g(x)}{h_f(x)}$ is a constant on $\mathbb R^n$, by the Jensen's inequality,  it follows that $\overline\delta J_p(f,g)< \overline\delta J_q(f,g)$, for $1\leq p<q<\infty$. For $p=\infty$, we have $\lim_{p\rightarrow \infty}\overline\delta J_p(f,g)=\overline\delta J_\infty(f,g)$. Moreover, we have the following Lemma.
\begin{lem}
  Suppose $f=e^{-u},\,g=e^{-v}\in \mathcal A_0$, $1\leq p<q<\infty$. Then
\begin{align}
  \overline\delta J_1(f,g)\leq \overline\delta J_p(f,g)\leq \overline\delta J_q(f,g)\leq\overline\delta J_\infty(f,g).
\end{align}
\end{lem}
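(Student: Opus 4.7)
The plan is to recognize $\overline\delta J_p(f,g)$ as an $L^p$-norm with respect to a probability measure, and then invoke the standard monotonicity of such norms in $p$.

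First I would set $d\nu := \dfrac{h_f\, d\mu(f,x)}{J(f^\diamond)}$, which, as pointed out in the text immediately after \eqref{nor-det}, is a probability measure on $\mathbb R^n$. With this notation, the definition rewrites compactly as
\begin{equation*}
\overline\delta J_p(f,g) = \left[\int_{\mathbb R^n} \left(\frac{h_g}{h_f}\right)^p d\nu\right]^{1/p} = \left\|\frac{h_g}{h_f}\right\|_{L^p(\nu)}.
\end{equation*}
Thus the chain of inequalities we need reduces to the classical fact that the $L^p(\nu)$-norm of a fixed nonnegative function is nondecreasing in $p\in[1,\infty]$ whenever $\nu$ is a probability measure.

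Second, for $1\le p<q<\infty$, I would apply Jensen's inequality to the convex function $t\mapsto t^{q/p}$ on $[0,\infty)$, giving
\begin{equation*}
\left(\int\left(\tfrac{h_g}{h_f}\right)^p d\nu\right)^{q/p} \le \int\left(\tfrac{h_g}{h_f}\right)^q d\nu,
\end{equation*}
which, after taking a $q$-th root, is exactly $\overline\delta J_p(f,g)\le \overline\delta J_q(f,g)$. Specialising to $p=1$ gives the leftmost inequality.

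Third, for the upper bound involving $p=\infty$, I would use the pointwise estimate
\begin{equation*}
\frac{h_g(x)}{h_f(x)}\le \sup_{y\in\mathbb R^n}\frac{h_g(y)}{h_f(y)} = \overline\delta J_\infty(f,g)
\end{equation*}
valid on the support of $\nu$; raising to the $q$-th power, integrating against the probability measure $\nu$, and taking a $q$-th root yields $\overline\delta J_q(f,g)\le \overline\delta J_\infty(f,g)$. No serious obstacle arises here; the only point that deserves a line of care is that $h_f>0$ on the relevant domain (so that the ratio $h_g/h_f$ is well defined $\nu$-a.e.), which follows from $f\in\mathcal A_0$ together with the integral description of $\mu(f,\cdot)$ pushed forward by $\nabla u$. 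If $h_g/h_f$ is essentially constant with respect to $\nu$ then all the inequalities collapse to equalities, consistent with the strict inequality remark preceding the lemma.
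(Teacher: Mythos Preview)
Your proposal is correct and follows essentially the same approach as the paper: the paper does not give a separate proof of this lemma, but the sentence immediately preceding it already notes that Jensen's inequality (applied to the probability measure $\frac{h_f\,d\mu(f,x)}{J(f^\diamond)}$) yields $\overline\delta J_p(f,g)\le\overline\delta J_q(f,g)$ for $1\le p<q<\infty$, and that the $p=\infty$ case is obtained as the limit. Your argument spells this out in full, and your direct pointwise bound for the $q\le\infty$ step is a clean alternative to the limiting argument the paper alludes to.
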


In order to establish the continuity of the $L_p$ John ellipsoid for log-concave function in section 4, we give the following Lemma of the $\overline\delta J_p(f,g)$.
\begin{lem}
  Let $f=e^{-u},\,\,g=e^{-v},\,\,g_0=e^{-v_0}\in \mathcal A_0$, then
\begin{align}\label{lip-ineq}
  \big|\overline\delta J_p(f,g)-\overline\delta J_p(f,g_0)\big|\leq \frac{\|h_g-h_{g_0}\|_\infty}{\min \{ |h_f|:x\in\mathbb R^n\}},
\end{align}
for all $p\in [1,\infty]$, where  $\|\cdot\|_\infty$ denotes the $\infty$ norms.
\end{lem}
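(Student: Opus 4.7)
\medskip

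\noindent\textbf{Proof proposal.} The plan is to recognize $\overline\delta J_p(f,g)$ as an $L^p$-norm of the ratio $h_g/h_f$ against a probability measure on $\mathbb R^n$, and then obtain the bound by a single application of Minkowski's inequality followed by a pointwise estimate on the integrand. Concretely, for $1\le p<\infty$, set
\begin{equation*}
d\nu(x)=\frac{h_f(x)\,d\mu(f,x)}{J(f^{\diamond})},
\end{equation*}
which by (\ref{nor-det}) is a probability measure on $\mathbb R^n$. Then the defining formula (\ref{nor-det}) reads
\begin{equation*}
\overline\delta J_p(f,g)=\Big\|\frac{h_g}{h_f}\Big\|_{L^p(\nu)},
\qquad
\overline\delta J_p(f,g_0)=\Big\|\frac{h_{g_0}}{h_f}\Big\|_{L^p(\nu)}.
\end{equation*}

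First I would invoke the reverse triangle inequality for the $L^p(\nu)$-norm (i.e.\ Minkowski's inequality), which gives
\begin{equation*}
\bigl|\overline\delta J_p(f,g)-\overline\delta J_p(f,g_0)\bigr|
\le \Big\|\frac{h_g-h_{g_0}}{h_f}\Big\|_{L^p(\nu)}.
\end{equation*}
Next, the pointwise estimate
\begin{equation*}
\left|\frac{h_g(x)-h_{g_0}(x)}{h_f(x)}\right|
\le \frac{\|h_g-h_{g_0}\|_\infty}{\min\{|h_f(y)|:y\in\mathbb R^n\}}
\end{equation*}
holds for every $x$, and since the right-hand side is a constant and $\nu$ is a probability measure, its $L^p(\nu)$-norm equals itself. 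Combining the two displays yields the claimed bound for all finite $p\ge 1$.

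For the endpoint $p=\infty$, the definition (\ref{inf-delta-p}) gives $\overline\delta J_\infty(f,g)=\sup_x h_g(x)/h_f(x)$, and the elementary inequality $|\sup A-\sup B|\le\sup|a-b|$ combined with the same pointwise estimate delivers the result in that case as well. The step I expect to require the most care is the justification that $\nu$ really is a probability measure (so that passing from the pointwise bound to the $L^p$ bound is free of normalization constants); this is exactly the content of the observation in (\ref{nor-det}) that $h_f\,d\mu(f,\cdot)/J(f^{\diamond})$ integrates to $1$, which I would cite rather than reprove. Beyond that, the argument is a straightforward application of Minkowski's inequality, so no substantive obstacle is anticipated.
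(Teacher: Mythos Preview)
Your proposal is correct and follows essentially the same route as the paper: recognize $\overline\delta J_p(f,\cdot)$ as an $L^p$-norm against the probability measure $h_f\,d\mu(f,\cdot)/J(f^\diamond)$, apply the reverse triangle inequality, and then use the pointwise bound $|h_g-h_{g_0}|/|h_f|\le \|h_g-h_{g_0}\|_\infty/\min|h_f|$ together with the fact that the measure has total mass one. The only cosmetic difference is at $p=\infty$, where the paper passes to the limit from finite $p$ whereas you argue directly via $|\sup A-\sup B|\le\sup|A-B|$; both are fine.
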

\begin{proof}
  First suppose that $p<\infty$, by (\ref{nor-det}) and the triangle inequality for $L_p$ norms, we have
\begin{align*}
\big|\overline\delta J_p(f,g)-\overline\delta J_p(f,g_0)\big|&\leq\Big[\frac{1}{J(f^\diamond)}\int_{\mathbb R^n}\Big|\frac{h_g}{h_f}-\frac{h_{g_0}}{h_f}\Big|^ph_fd\mu(f,x)\Big]^{\frac{1}{p}}\\
&\leq\Big[\frac{1}{J(f^\diamond)}\int_{\mathbb R^n}\frac{1}{|h_f|^p}h_fd\mu(f,x)\Big]^{\frac{1}{p}}\|h_g-h_{g_0}\|_\infty\\
&\leq \frac{\|h_g-h_{g_0}\|_\infty}{\min\{|h_f(x)|:x\in \mathbb R^n\}}.
\end{align*}
The third inequality we use the fact that $\frac{h_fd\mu(f,x)}{J(f^\diamond)}$ is a probability measure on $\mathbb R^n$.
For $p\rightarrow\infty$, the continuous of the $L_p$ norm with $p$ shows that  (\ref{lip-ineq}) holds for $p=\infty$ as well.
\end{proof}

The following Lemma shows some Properties of $\delta J_p(f,g)$ and its normalizer.
\begin{lem}\label{inv-det-j}
  Suppose that $f=e^{-u},\,\, g=e^{-v}\in \mathcal A_0$, then

  \normalfont  (1) $\delta J_p(f,f)=\frac{1}{p}J(f^\diamond)$.

  \normalfont  (2) $\overline \delta J_p(f, f)=1$.

  \normalfont (3) $\delta J_p(f,\lambda \cdot_p g)=\lambda\delta J_p(f,g)$, for $\lambda>0$.

  \normalfont (4) $\overline\delta J_p(f,\lambda \cdot_p g)=\lambda^{\frac{1}{p}}\overline\delta J_p(f,g)$, for $\lambda>0$.

  \normalfont (5) $\delta J_p(\phi f,g)=|\det\phi|^{-1}\delta J_p(f,\phi^{-1}g)$, for all $\phi\in GL(n)$.

  \normalfont (6) $\overline\delta J_p(\phi f,g)=\overline\delta J_p(f,\phi^{-1}g)$, for all $\phi\in GL(n)$.
\end{lem}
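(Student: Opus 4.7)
The six items all follow by direct manipulation of the integral expression \eqref{fir-var-jf} together with the $GL(n)$-covariance properties of $h_f$ in Proposition \ref{sln-invar} and the definition of the $L_p$ surface area measure. My plan is to prove (1) and (5) first, since (2), (4) and (6) are then immediate normalizations, and (3) is a one-line scaling check.

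For (1), I would unfold \eqref{fir-var-jf} with $g=f$ and notice that the factor $(h_f(\nabla u(x)))^p$ coming from the integrand cancels against the $(h_f(\nabla u(x)))^{1-p}$ built into the definition of $\mu_p(f,\cdot)$, reducing $\int (h_f)^p\,d\mu_p(f,\cdot)$ to $\int h_f(\nabla u(x))\,f(x)\,dx$. This last integral equals $\delta J(f,f)=J(f^\diamond)$ by \eqref{mix-que-exp}: use the Fenchel identity \eqref{leg-equ} to write $h_f(\nabla u(x))=\langle x,\nabla u(x)\rangle - u(x)$, then apply the divergence-theorem identity $\int \langle x,\nabla u\rangle e^{-u}\,dx=n\int e^{-u}\,dx$ (justified by the super-linear growth of $u$ from Lemma \ref{con-for-u}, which guarantees sufficient decay of $f$) to obtain $\int(n-u)e^{-u}\,dx=J(nf+f\log f)=J(f^\diamond)$. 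This gives $\delta J_p(f,f)=\tfrac{1}{p}J(f^\diamond)$, and (2) follows by plugging into \eqref{nor-det}.

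For (3), the key point is the scalar behaviour of $h_{\lambda\cdot_p g}$. Using Definition \ref{def-lp-asp-sum} with the convention $0\cdot_p f\oplus_p\lambda\cdot_p g=g(\cdot/\lambda^{1/p})^{\lambda^{1/p}}$, one computes $-\log(\lambda\cdot_p g)(x)=\lambda^{1/p}v(x/\lambda^{1/p})$, and a change of variables in the Fenchel conjugate immediately gives $h_{\lambda\cdot_p g}=\lambda^{1/p}h_g$. Substituting into \eqref{fir-var-jf} pulls out a factor of $\lambda$, yielding (3); (4) is then a one-line consequence via \eqref{nor-det}.

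For (5), apply the change of variables $z=\phi x$ to the integral defining $\mu_p(\phi f,\cdot)$. Writing $\phi f=e^{-u\circ\phi}$ and using $\nabla(u\circ\phi)(x)=\phi^{t}\nabla u(\phi x)$ together with Proposition \ref{sln-invar} to rewrite $h_{\phi f}(\nabla(u\circ\phi)(x))=h_f(\nabla u(z))$, I obtain the push-forward relation $\int g(y)\,d\mu_p(\phi f,y)=|\det\phi|^{-1}\int g(\phi^{t}y)\,d\mu_p(f,y)$. Applying this to $g=(h_g)^{p}$ and using $h_g(\phi^{t}y)=h_{\phi^{-1}g}(y)$ (again Proposition \ref{sln-invar}) yields (5). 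Finally, (6) follows by specializing (5) with $g=\phi f$ to get $J((\phi f)^\diamond)=|\det\phi|^{-1}J(f^\diamond)$, so the $|\det\phi|^{-1}$ factors cancel in the ratio defining $\overline{\delta J_p}$. The only subtle technical point is the $GL(n)$-change-of-variables bookkeeping in (5); everything else is pure substitution.
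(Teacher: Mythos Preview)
Your proposal is correct and follows essentially the same route as the paper's proof: both rely on the integral formula \eqref{fir-var-jf}, the identity $h_{\lambda\cdot_p g}=\lambda^{1/p}h_g$ from Definition~\ref{def-lp-asp-sum}, the chain rule $\nabla(u\circ\phi)(x)=\phi^{t}(\nabla u)(\phi x)$, and Proposition~\ref{sln-invar}. Your argument is in fact more detailed than the paper's --- for (1) the paper simply says ``immediately'' from \eqref{fir-var-jf} and Definition~\ref{def-p-mix}, whereas you spell out the cancellation and the Fenchel/divergence identity behind $J(f^\diamond)$; and for (6) you derive $J((\phi f)^\diamond)=|\det\phi|^{-1}J(f^\diamond)$ from (1) and (5), which the paper just asserts.
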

\begin{proof}
  By formula (\ref{fir-var-jf}) and Definition \ref{def-p-mix}, it immediately gives (1) and (2).

  In order to prove (3), by Definition \ref{def-lp-asp-sum}, we have
 $h_{\lambda\cdot_p f}=\lambda^{\frac{1}{p}} h_f.$
  So we have $ \delta J_p(f,\lambda\cdot_p g)=\lambda\delta J_p(f,g).$
 By (3), it yields (4) directly.

 By the integral formula of the first variation (\ref{fir-var-jf}), and note that $\nabla_x(\phi u )=\phi^{t}\nabla_{\phi x}u $, we have
  \begin{align*}
    \delta J_p(\phi f, g)&=\frac{1}{p}\int_{\mathbb R^n}h^p_g(x)d\mu_p(\phi f,x)\\
    &=\frac{1}{p}\int_{\mathbb R^n}h_g^p\big(\nabla_x(\phi u)\big)h^{1-p}_{\phi f}\big(\nabla_x(\phi u)\big)e^{- \phi u}dx\\
    &=\frac{1}{p}\int_{\mathbb R^n}h^p_{\phi^{-1}g }\big(\nabla_{\phi x}u \big)h^{1-p}_f\big(\nabla_{\phi x}u \big)e^{- u(\phi x)}dx\\
    &=|\det\phi^{-1}|\frac{1}{p}\int_{\mathbb R^n}h^p_{\phi^{-1}g}\big(\nabla u \big)h^{1-p}_f\big(\nabla u \big)e^{- u }dx\\
    &=|\det\phi|^{-1}\delta J_p(f, \phi^{-1}g).
  \end{align*}

On other hand, note that $J\big((\phi f)^\diamond\big)=|\det \phi|^{-1}J(f^\diamond)$ and together with (5), it follows that  $\overline\delta J_p(\phi f,g)=\overline\delta J_p(f,\phi^{-1}g)$. So we complete the proof.
\end{proof}

The following result will been used in the next section (see \cite{rot-sup2013}).
\begin{prop}[\cite{rot-sup2013}]\label{con-the}
  Let $D$ be a relatively open convex sets, and $f_1, f_2,\cdots, $ be a sequence of finite convex functions on $D$. Suppose that the real number $f_1(x), f_2(x),\cdots, $ is bounded for each $x\in D$. It is then possible to selected a subsequence of $f_1,f_2,\cdots$, which converges uniformly on closed bounded subset of $D$ to some finite convex function $f$.
\end{prop}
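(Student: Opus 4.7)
The plan is to obtain the result by the standard compactness argument for convex functions: upgrade the pointwise boundedness hypothesis to local uniform boundedness on compact subsets of $D$, then to local uniform Lipschitz estimates (hence equicontinuity), then apply Arzel\`a--Ascoli combined with a diagonal extraction, and finally observe that convexity is preserved under the uniform limit.

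First I would upgrade the pointwise bound to a \emph{local uniform} bound. Fixing $x_0\in D$, pick a simplex $\Delta\subset D$ with affinely independent vertices $v_0,\dots,v_n$ and $x_0$ in its relative interior. For any $x=\sum_i \lambda_i v_i\in\Delta$, convexity of $f_k$ gives $f_k(x)\le \max_i f_k(v_i)$, and the right-hand side is bounded uniformly in $k$ by hypothesis. To obtain a uniform \emph{lower} bound on a smaller neighborhood of $x_0$, write $x_0=\tfrac12 x+\tfrac12(2x_0-x)$ for $x$ close to $x_0$, so $f_k(x)\ge 2f_k(x_0)-f_k(2x_0-x)$; this is controlled from below by the pointwise bound at $x_0$ together with the upper bound just derived at $2x_0-x$.

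Second, I would convert local uniform boundedness into a local uniform Lipschitz estimate. This rests on the classical fact that a convex function bounded by $M$ on $B(x_0,2r)\subset D$ is Lipschitz on $B(x_0,r)$ with constant of order $M/r$: for distinct $x,y\in B(x_0,r)$, extend the segment $[x,y]$ beyond $y$ to a point $z\in\partial B(x_0,2r)$ and apply the convexity inequality to the collinear triple $(x,y,z)$; symmetric reasoning gives the two-sided bound. Consequently, $\{f_k\}$ is equicontinuous on a neighborhood of every point of $D$, uniformly in $k$. Third, exhaust $D$ by an increasing sequence of compact sets $K_1\subset K_2\subset\cdots$ with $\bigcup_j K_j=D$ (available because $D$, being relatively open in $\mathbb{R}^n$, is $\sigma$-compact). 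On each $K_j$ the family $\{f_k\}$ is uniformly bounded and equicontinuous, so Arzel\`a--Ascoli produces a subsequence converging uniformly on $K_j$; diagonalizing across $j$ yields a single subsequence that converges uniformly on every compact subset of $D$. The limit $f$ is finite by the uniform local bounds and convex because the inequality $f_k((1-t)x+ty)\le(1-t)f_k(x)+tf_k(y)$ passes to the uniform (in particular pointwise) limit.

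The main obstacle is the Lipschitz estimate in the second step: expressing a Lipschitz bound for a convex function in terms of a mere sup-bound on a slightly larger ball requires a careful geometric application of the convexity inequality along collinear triples reaching to the boundary of the larger ball. Once this estimate is in hand, the upgrading in the first step reduces to straightforward convex combinations, and the diagonal extraction plus the stability of convexity under limits are routine.
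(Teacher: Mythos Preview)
Your proof is correct and follows the standard compactness argument for convex functions (this is essentially Theorem~10.9 in Rockafellar's \emph{Convex Analysis}). Note, however, that the paper does not supply its own proof of this proposition: it is stated as a quoted result from the literature (attributed to \cite{rot-sup2013}, though the result is classical), so there is no ``paper's proof'' to compare against. Your argument would serve perfectly well as a self-contained justification.
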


\section{$L_p$ John ellipsoid for log-concave functions}

Let $\gamma=e^{-\frac{\|x\|^2}{2}}$ be the standard Gaussian function. In the following, we set
$$\gamma_\phi(x)=e^{-\frac{\|\phi x\|^2}{2}},$$
where $\phi\in GL(n)$.
It is worth noting that the Gaussian function $\gamma_\phi$ plays an important role in the study of the extremal problem of log-concave functions as the ellipsoids do for the study of the extremal problems of convex bodies. In fact, it is the unique function of  $\mathcal A$ which is self-dual, that is
$$f=e^{-\frac{\|x\|^2}{2}}\Longleftrightarrow f^o=f.$$

Now, Let us consider the following optimization problem.


The $L_p$ optimization problem $S_{p}$ ($p\geq 1$) for log-concave functions $f$: Given a log-concave function $f\in\mathcal A_0,$ find a Gaussian function $\gamma_\phi $ which solves the following constrained maximization problem:
\begin{align}
  \max\Big(\frac{J(\gamma_\phi)}{c_n}\Big)\,\,\,\,\mbox{ subject to}\,\,\,\,\,\,   \overline\delta J_p\big(f,\gamma_\phi\big)\leq 1.
\end{align}

The dual $L_p$ optimization problem $\overline S_p$ ($p\geq 1$) for log-concave functions $f$: Given a log-concave function $f\in\mathcal A_0$, find a Gaussian function $\gamma_{\overline\phi}$ which solves the following constrained minimization problem:
\begin{align}
  \min\overline \delta J_p\big(f,\gamma_{\overline\phi}\big)\,\,\,\,\mbox{ subject to}\,\,\,\,\,\,\frac{ J(\gamma_{\overline\phi})}{c_n}\geq 1.
\end{align}

\begin{lem}\label{solut-for S}
These above optimization problems for log-cocave functional are equivalent to:
{\normalfont

(1)} The problem $S_p$ is equivalent to:
$$\max\Big(\frac{J(\gamma_\phi)}{c_n}\Big)\,\,\,\,\,\,\,\, \mbox{ subject to} \,\,\,\,\,\,\,\overline \delta J_p(f, \gamma_\phi)=1.$$

{\normalfont (2)} The dual problem $\overline S_p$ is equivalent to:
$$\min\overline \delta J_p(f,\gamma_{\overline\phi})\,\,\,\,\,\,\,\, \mbox{ subject to} \,\,\,\,\,\,\, \frac{J(\gamma_{\overline\phi})}{c_n}=1.$$
\end{lem}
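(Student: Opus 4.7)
My plan is a one-parameter rescaling argument on the Gaussian parameter $\phi\in GL(n)$. I would first record the behaviour of the two quantities appearing in $S_p$ and $\overline S_p$ under the substitution $\phi\mapsto c\phi$ for $c>0$: a change of variables in the Gaussian integral gives $J(\gamma_{c\phi})=c^{-n}J(\gamma_\phi)$; since $\gamma_\phi=e^{-u}$ with $u(x)=\|\phi x\|^2/2$, a short Fenchel-conjugate calculation yields $h_{\gamma_\phi}(y)=\|\phi^{-t}y\|^2/2$ and hence $h_{\gamma_{c\phi}}=c^{-2}h_{\gamma_\phi}$; feeding the latter into formula (\ref{nor-det}) the factor $c^{-2}$ pulls out of the $L_p$ norm to give $\overline\delta J_p(f,\gamma_{c\phi})=c^{-2}\overline\delta J_p(f,\gamma_\phi)$. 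These three identities let me slide along the one-parameter family $\{\gamma_{c\phi}:c>0\}$ and trade the two quantities against each other.

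For part (1), the containment of the equality-feasible set inside the inequality-feasible set immediately gives $\sup_{\{\overline\delta J_p=1\}}J(\gamma_\phi)/c_n\leq\sup_{\{\overline\delta J_p\leq 1\}}J(\gamma_\phi)/c_n$. For the reverse inequality I would take any feasible $\gamma_\phi$ for $S_p$ with $\alpha:=\overline\delta J_p(f,\gamma_\phi)\leq 1$ (note that $\alpha>0$, since $\phi\in GL(n)$ makes $h_{\gamma_\phi}$ strictly positive off the origin) and rescale by $c=\sqrt{\alpha}\leq 1$, which yields $\overline\delta J_p(f,\gamma_{c\phi})=c^{-2}\alpha=1$ and $J(\gamma_{c\phi})/c_n=\alpha^{-n/2}J(\gamma_\phi)/c_n\geq J(\gamma_\phi)/c_n$. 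Thus every feasible point of $S_p$ has an equality-feasible competitor with at least as large an objective value, so the two suprema coincide; moreover, any maximizer of $S_p$ must in fact satisfy $\overline\delta J_p(f,\gamma_\phi)=1$, since otherwise the strict increase of $J$ above would contradict optimality.

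Part (2) is entirely symmetric. Given any feasible $\gamma_{\overline\phi}$ for $\overline S_p$ with $\beta:=J(\gamma_{\overline\phi})/c_n\geq 1$, I would rescale by $c=\beta^{1/n}\geq 1$ to obtain $J(\gamma_{c\overline\phi})/c_n=c^{-n}\beta=1$ together with $\overline\delta J_p(f,\gamma_{c\overline\phi})=\beta^{-2/n}\overline\delta J_p(f,\gamma_{\overline\phi})\leq\overline\delta J_p(f,\gamma_{\overline\phi})$, forcing the two infima to agree and forcing every minimizer of $\overline S_p$ to satisfy $J(\gamma_{\overline\phi})/c_n=1$. I do not anticipate any substantive obstacle here; the argument is a pure normalization that uses only the Gaussianity of $\gamma_\phi$ and the multiplicative homogeneity of (\ref{nor-det}) in the variable $h_g$, and the only small point to check is the Fenchel-conjugate identity $h_{\gamma_{c\phi}}=c^{-2}h_{\gamma_\phi}$ for a positive-definite quadratic form.
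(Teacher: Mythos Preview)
Your proposal is correct and follows essentially the same rescaling argument as the paper: the paper phrases the normalization via the $L_p$ scalar multiplication $\lambda\cdot_p\gamma_\phi$ (using Lemma~\ref{inv-det-j} and formula~(\ref{mult-int-equ})), while you parametrize the same one-parameter family of Gaussians directly by $\phi\mapsto c\phi$ and compute the Fenchel conjugate explicitly; for Gaussians these two operations coincide (indeed $\lambda^p\cdot_p\gamma_\phi=\gamma_{\phi/\sqrt\lambda}$), so the two arguments are the same up to packaging.
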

\begin{proof}
(1) Assume that  $\gamma_\phi=e^{-\frac{\|\phi x\|^2}{2}}$ is the solution of the problem $S_p$, if  $\frac{J(\gamma_\phi)}{c_n}$ obtains the maximum, but $\overline \delta J_p(f,\gamma_\phi)\neq 1$, assume that $\overline \delta J_p(f,\gamma_\phi)< 1$, then let $$\overline\gamma_\phi=\frac{1}{\overline \delta J_p(f,\gamma_\phi)^p}\cdot_p\gamma_{\phi}.$$
Then $h_{\overline\gamma_\phi}=\frac{1}{\overline \delta J_p(f,\gamma_\phi)}h_{\gamma_{\phi}}$. Moreover, we have
\begin{align*}
\overline \delta J_p(f, \overline\gamma_\phi)&=\Big[\frac{1}{J(f^\diamond)}\int_{\mathbb R^n}h_{\overline\gamma_\phi}^p d\mu_p(f,x)\Big]^{\frac{1}{p}}\\
&=\frac{1}{\overline \delta J_p(f,\gamma_\phi)}\Big[\frac{1}{J(f^\diamond)}\int_{\mathbb R^n}h_{\gamma_\phi}^pd\mu_p(f,x)\Big]^{\frac{1}{p}}\\
&=1.
\end{align*}
On the other hand, since $\gamma_\phi=e^{-\frac{\|\phi x\|^2}{2}}$, by the Definition \ref{def-lp-asp-sum} of the $L_p$ multiplication, a simple computation shows that, for $\lambda\in \mathbb R$,
\begin{align}\label{mult-int-equ}
  J\Big(\lambda^p\cdot_p\gamma_\phi\Big)=\int_{\mathbb R^n}e^{-\lambda \frac{\|\phi (x/\lambda)\|^2}{2}}dx=
\lambda^{\frac{n}{2}}J(\gamma_\phi).
\end{align}
Note that $\overline\delta J_p(f,\gamma_\phi)^{-\frac{n}{2}}>1$, then we have
\begin{align*}
  J\Big(\frac{1}{\overline \delta J_p(f,\gamma_\phi)^p}\cdot_p\gamma_\phi\Big)=\overline \delta J_p(f,\gamma_\phi)^{-\frac{n}{2}}J(\gamma_\phi)\geq J(\gamma_\phi).
\end{align*}
This means that $\gamma_\phi$ is not a solution to the problem $S_p$, which is contract with our assumption, so  we complete the proof of the first one.
 The similar with the proof of Problem $\overline {S_p}$, so we complete the proof.
\end{proof}

\begin{lem}\label{ove-S to S}
  Suppose $f\in \mathcal A_0$. If $\gamma_\phi$ is a Gaussian function that is an solution for the problem $S_p$ of $f$, then
  $$\Big(\frac{c_n}{J(\gamma_\phi)}\Big)^{\frac{2p}{n}}\cdot_p\gamma_\phi$$ is an solution  for problem $\overline S_p$  of $f$. Conversely, if $\gamma_{\overline\phi}$ is a Gaussian function that is an solution for the $\overline S_p$ problem of $f$, then
  $$\frac{1}{\overline\delta J_p(f,\gamma_{\overline\phi})^p}\cdot_p\gamma_{\overline\phi}$$
  is an solution  for problem $S_p$ of $f$.
\end{lem}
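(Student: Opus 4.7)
The plan is to exploit the precise way $L_p$ scaling of a Gaussian affects both the total mass and the normalized first variation, and to observe that the two problems are linked by a single scale-invariant quantity. By the scaling identity \eqref{mult-int-equ} for $J$ on Gaussian functions together with Lemma \ref{inv-det-j}(4), a scaling $\lambda^p \cdot_p \gamma_\phi$ satisfies
\begin{align*}
J(\lambda^p\cdot_p\gamma_\phi)=\lambda^{n/2}J(\gamma_\phi),\qquad \overline\delta J_p(f,\lambda^p\cdot_p\gamma_\phi)=\lambda\,\overline\delta J_p(f,\gamma_\phi).
\end{align*}
Hence the ratio $J(\gamma_\phi)\big/\overline\delta J_p(f,\gamma_\phi)^{n/2}$ is invariant along the one-parameter $L_p$ scaling orbit of $\gamma_\phi$. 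In view of Lemma \ref{solut-for S}, problem $S_p$ maximizes $J(\gamma_\phi)$ subject to $\overline\delta J_p(f,\gamma_\phi)=1$, while problem $\overline S_p$ minimizes $\overline\delta J_p(f,\gamma_{\overline\phi})$ subject to $J(\gamma_{\overline\phi})=c_n$; both objectives agree, up to a positive monotone transformation, with maximizing the scale-invariant ratio above over all Gaussians, and the particular $L_p$ rescalings written in the lemma are precisely those that move a given Gaussian from one constraint surface onto the other.

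For the forward direction, I would first verify by direct substitution into \eqref{mult-int-equ} that $\gamma':=\bigl(c_n/J(\gamma_\phi)\bigr)^{2p/n}\cdot_p\gamma_\phi$ satisfies $J(\gamma')=c_n$, so that $\gamma'$ is admissible for $\overline S_p$. Next, using Lemma \ref{solut-for S}(1) to obtain $\overline\delta J_p(f,\gamma_\phi)=1$ and then applying Lemma \ref{inv-det-j}(4), I would compute $\overline\delta J_p(f,\gamma')=(c_n/J(\gamma_\phi))^{2/n}$. To show that this value is minimal, I would argue by contradiction: any competing Gaussian $\gamma_\psi$ with $J(\gamma_\psi)=c_n$ and strictly smaller $\overline\delta J_p(f,\gamma_\psi)$ would, after the reverse rescaling $\overline\delta J_p(f,\gamma_\psi)^{-p}\cdot_p\gamma_\psi$, become admissible for $S_p$, and a short calculation via \eqref{mult-int-equ} would then give this rescaled function a total mass strictly greater than $J(\gamma_\phi)$, contradicting the assumed $S_p$-optimality of $\gamma_\phi$.

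The reverse direction proceeds by a symmetric argument: starting from a solution $\gamma_{\overline\phi}$ of $\overline S_p$ with $J(\gamma_{\overline\phi})=c_n$ by Lemma \ref{solut-for S}(2), Lemma \ref{inv-det-j}(4) yields $\overline\delta J_p(f,\gamma'')=1$ for $\gamma'':=\overline\delta J_p(f,\gamma_{\overline\phi})^{-p}\cdot_p\gamma_{\overline\phi}$, and a mirror contradiction argument, rescaling any $S_p$-competitor into one admissible for $\overline S_p$, establishes that $J(\gamma'')$ attains the $S_p$-maximum. I do not anticipate a substantial obstacle; the only point requiring genuine care is the bookkeeping of the exponents $n/2$, $1/p$, and $2p/n$ under $L_p$ multiplication, together with the verification that the particular scaling factors stated in the lemma are exactly those needed to place the rescaled Gaussian on the prescribed constraint surface.
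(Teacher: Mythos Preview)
Your proposal is correct and follows essentially the same route as the paper: both proofs hinge on the two scaling identities $J(\lambda^p\cdot_p\gamma_\phi)=\lambda^{n/2}J(\gamma_\phi)$ and $\overline\delta J_p(f,\lambda^p\cdot_p\gamma_\phi)=\lambda\,\overline\delta J_p(f,\gamma_\phi)$, and both establish optimality by rescaling an arbitrary competitor for one problem into an admissible candidate for the other. The only cosmetic difference is that you frame the argument around the scale-invariant ratio $J(\gamma_\phi)/\overline\delta J_p(f,\gamma_\phi)^{n/2}$ and proceed by contradiction, whereas the paper takes a generic competitor $\gamma_T$ (resp.\ $\gamma_{\overline T}$) and derives the required inequality directly; the underlying computation is identical.
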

\begin{proof}
Let $\gamma_T=e^{-\frac{\|T x\|}{2}}$, where $T\in GL(n)$, and satisfies $J(\gamma_T)\geq c_n$.  By Lemma \ref{inv-det-j}, it obviously has
$\overline\delta J_p\Big(f, \frac{1}{\overline \delta J_p(f,\gamma_T)^p}\cdot_p\gamma_T\Big)=1.$
Since $\gamma_\phi$ is an $S_p$ solution for $f$, then
$$J(\gamma_\phi)\geq J\Big(\frac{1}{\overline \delta J_p(f,\gamma_T)^p}\cdot_p\gamma_T\Big).$$
By (\ref{mult-int-equ}) it  shows
\begin{align}\label{p-lin-int-log}
J\Big(\frac{1}{\overline \delta J_p(f,\gamma_T)^p}\cdot_p\gamma_T\Big)=\overline \delta J_p(f,\gamma_T)^{-\frac{n}{2}}J(\gamma_T).
\end{align}
According to Lemma \ref{solut-for S}, we have $\overline \delta J_p(f,\gamma_\phi)=1$. Then
$$\overline \delta J_p(f,\gamma_T)\geq \Big(\frac{J(\gamma_T)}{J(\gamma_\phi)}\Big)^{\frac{2}{n}}\geq \Big(\frac{c_n}{J(\gamma_\phi)}\Big)^{\frac{2}{n}}=\overline \delta J_p\Big(f,\big(\frac{c_n}{J(\gamma_\phi)}\big)^{\frac{2p}{n}}\cdot_p\gamma_\phi\Big).$$
On the other hand, since
$J\Big(\big(\frac{c_n}{J(\gamma_\phi)}\big)^{\frac{2p}{n}}\cdot_p\gamma_\phi\Big)=c_n,$
it implies that the ellipsoid $\big(\frac{c_n}{J(\gamma_\phi)}\big)^{\frac{2p}{n}}\cdot_p\gamma_\phi$ is a solution to Problem $\overline S_p$. We complete the first assertion's proof.

Let $\gamma_{\overline T}=e^{-\frac{\|\overline Tx\|^2}{2}}$, $\overline T\in GL(n)$, such that $\overline \delta J_p(f,\gamma_{\overline T})\leq 1$. Then we have
$$J\Big(\big(\frac{c_n}{J(\gamma_{\overline T})}\big)^{\frac{2p}{n}}\cdot_p\gamma_{\overline T}\Big)\geq c_n.$$
Since $\gamma_{\overline \phi}$ is solution of $\overline S_p$, we have
$$\Big(\frac{c_n}{J(\gamma_{\overline T})}\Big)^{\frac{2}{n}}\overline \delta J_p(f,\gamma_{\overline T})=\overline \delta J_p\Big(f,\big(\frac{c_n}{J(\gamma_{\overline T})}\big)^{\frac{2p}{n}}\cdot_p\gamma_{\overline T}\Big)\geq \overline \delta J_p(f,\gamma_{\overline \phi}).$$
By Lemma \ref{solut-for S} we have $J(\gamma_{\overline \phi})=c_n$. Hence the above inequality can be rewritten as
$$\frac{J(\gamma_{\overline\phi})}{\overline\delta J_p(f,\gamma_{\overline\phi})^{\frac{n}{2}}}\geq \frac{J(\gamma_{\overline T})}{\overline\delta J_p(f,\gamma_{\overline T})^{\frac{n}{2}}}.$$
By formula (\ref{p-lin-int-log}) again, it means
\begin{align*}
  J\Big(\frac{1}{\overline\delta J_p(f,\gamma_{\overline\phi})^p}\cdot_p\gamma_{\overline\phi}\Big)\geq  J\Big(\frac{1}{\overline\delta J_p(f,\gamma_{\overline T})^p}\cdot_p\gamma_{\overline T}\Big)\geq J(\gamma_{\overline T}).
\end{align*}
On the other hand, since
\begin{align*}
  \overline \delta J_p\Big(f,\frac{1}{\overline\delta J_p(f,\gamma_{\overline\phi})^p}\cdot_p\gamma_{\overline\phi}\Big)=1.
\end{align*}
This completes the proof.
\end{proof}

\begin{thm}\label{exi-sol-sp}
Suppose that $f\in\mathcal A_0$. Then there exist a solution, $\gamma_{\overline\phi}$, satisfies the Optimization Problem $\overline S_p$.
  \end{thm}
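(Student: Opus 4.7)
The plan is to execute the standard direct-method compactness-plus-continuity argument for the minimization problem $\overline{S}_p$. First I would use Lemma~\ref{solut-for S}(2) to replace the constraint $J(\gamma_{\overline\phi})/c_n \geq 1$ by the equality $J(\gamma_{\overline\phi}) = c_n$. Since $J(\gamma_\phi) = \int_{\mathbb{R}^n} e^{-\|\phi x\|^2/2}\,dx = c_n |\det\phi|^{-1}$, this constraint is the same as $|\det\phi| = 1$. Because $\gamma_\phi$ depends only on the positive-definite matrix $A = \phi^t\phi$, the problem is really to minimize the continuous functional $\Phi(A) = \overline{\delta} J_p(f,\gamma_\phi)$ over $A$ in the set $\mathcal{D} = \{A \succ 0 : \det A = 1\}$. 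Set $m = \inf_{A\in \mathcal{D}} \Phi(A) \geq 0$ and take a minimizing sequence $\{A_k\}$.

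The heart of the argument is coercivity of $\Phi$ on $\mathcal{D}$: I would show that if $\{A_k\} \subset \mathcal{D}$ has no subsequence converging in $\mathcal{D}$, then $\Phi(A_k)\to\infty$. Using $h_{\gamma_\phi}(x)=\tfrac12\|\phi^{-t}x\|^2 = \tfrac12\langle A^{-1}x,x\rangle$, a divergent sequence in $\mathcal{D}$ must have eigenvalues of $A_k^{-1}$ tending to infinity along some eigen-direction (the determinant constraint forcing that as one eigenvalue of $A_k$ blows up, another goes to zero). After extracting a subsequence so that the associated eigenvectors converge, $h_{\gamma_{\phi_k}}(x)\to\infty$ pointwise for every $x$ outside a proper linear subspace. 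Because $f\in \mathcal{A}_0$ is nondegenerate, the measure $\mu_p(f,\cdot)$, being the push-forward under $\nabla u$ of a density that is positive on a set of non-empty interior, cannot be concentrated on any proper subspace, so Fatou's lemma applied to $\int h_{\gamma_{\phi_k}}^p\,d\mu_p(f,\cdot)$ forces $\Phi(A_k)\to\infty$. This contradicts the fact that $\Phi(A_k)\to m<\infty$, so the minimizing sequence is precompact in $\mathcal{D}$.

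Extracting a convergent subsequence $A_k\to A_0$ with $\det A_0 = 1$ (so $A_0\in\mathcal{D}$), the corresponding support functions $h_{\gamma_{\phi_k}}$ are convex and pointwise bounded on bounded sets, so by Proposition~\ref{con-the} they converge uniformly on compact subsets to $h_{\gamma_{\phi_0}}$. To conclude $\Phi(A_0)=m$ I would pass to the limit in
\begin{equation*}
\Phi(A_k)^p = \frac{1}{J(f^\diamond)}\int_{\mathbb{R}^n}h_{\gamma_{\phi_k}}^p\,d\mu_p(f,\cdot)
\end{equation*}
by dominated convergence: since $\|\phi_k^{-t}\|$ is uniformly bounded by the compactness, one has a dominating bound $h_{\gamma_{\phi_k}}(x)^p\leq C\|x\|^{2p}$, and the finiteness of $\int \|x\|^{2p}\,d\mu_p(f,\cdot)$ is again a consequence of the rapid decay of $f\in\mathcal{A}_0$ together with Lemma~\ref{con-for-u}. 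Hence $\Phi(A_0) = m$ and $\gamma_{\phi_0}$ solves Problem $\overline{S}_p$.

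The step I expect to be the main obstacle is the coercivity in paragraph two: quantifying the non-concentration of $\mu_p(f,\cdot)$ on proper subspaces and turning that into the divergence $\int h_{\gamma_{\phi_k}}^p\,d\mu_p(f,\cdot)\to\infty$. Both the coercivity and the dominated-convergence step rely on mild moment/tail control of $\mu_p(f,\cdot)$ that must be extracted from the hypothesis $f\in\mathcal{A}_0$ and the growth estimate of Lemma~\ref{con-for-u}; once these are in hand, the rest of the argument is the routine compactness-plus-lower-semicontinuity scheme.
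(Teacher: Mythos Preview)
Your approach is a genuine direct-method argument and differs substantially from what the paper does. The paper's proof of Theorem~\ref{exi-sol-sp} reduces the constraint to $\phi\in SL(n)$, writes down the one-parameter variation $\phi_\epsilon=(I+\epsilon\phi)\det(I+\epsilon\phi)^{-1/n}$, observes that a minimizer must satisfy the first-order condition
\[
\int_{\mathbb R^n}\frac{d}{d\epsilon}\Big|_{\epsilon=0}\Big(\frac{\|\phi_\epsilon^{-t}x\|^2}{2}\Big)^{p}d\mu_p(f,x)=0,
\]
and then simply asserts that ``since the norm of the vector and $t^p$ are continuous functions'' a solution exists. That is, the paper essentially identifies the Euler--Lagrange condition and appeals to continuity, without addressing the non-compactness of $SL(n)$ or explaining why a minimizing sequence cannot escape to the boundary.

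Your plan confronts exactly this point: you parametrize by $A=\phi^t\phi$ on $\{A\succ0:\det A=1\}$, argue coercivity of $A\mapsto\overline{\delta}J_p(f,\gamma_\phi)$ via the fact that $\mu_p(f,\cdot)$ is not supported on a proper subspace, and then close with dominated convergence along a convergent minimizing subsequence. This is the standard and robust route, and it supplies precisely the compactness ingredient that the paper's argument leaves implicit. The trade-off is that your proof requires checking the two technical points you flag (non-concentration of $\mu_p(f,\cdot)$ and the moment bound $\int\|x\|^{2p}\,d\mu_p(f,\cdot)<\infty$), whereas the paper sidesteps these at the cost of rigor. If you can verify those two estimates from $f\in\mathcal A_0$ and Lemma~\ref{con-for-u}, your argument is the more complete one.
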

\begin{proof}
  By the definition of the optimization problem $\overline S_p$, let $\phi\in GL(n)$, if $\gamma_\phi$ is Gaussian function subject to  $J(\gamma_\phi)=c_n$, then we have $\phi\in SL(n)$. The existence of the solution of the Problem $\overline S_p$ is equivalent to find the $\phi_0\in SL(n)$ which solves the following. Let $\phi\in SL(n)$, choose $\epsilon_0>0$ sufficiently small so that for all $\epsilon\in (-\epsilon_0, \epsilon_0)$, $I+\epsilon \phi$ is invertible. For $\epsilon\in (-\epsilon_0, \epsilon_0)$ define $\phi_\epsilon\in SL(n)$ by
\begin{align*}
  \phi_\epsilon=\frac{I+\epsilon \phi}{det(I+\epsilon \phi)^{\frac{1}{n}}},
\end{align*}
here $I$ is the identity matrix and $|det\phi_\epsilon|=1$. Then find $\phi_0$ such that
\begin{align}
  \frac{d}{d\epsilon}\Big|_{\epsilon=0}\delta J_p\big(f,\gamma_{\phi_\epsilon} \big)=0.
\end{align}
It is equivalent to
\begin{align*}
\int_{\mathbb R^n}\frac{d}{d\epsilon}\big|_{\epsilon=0}\Big(\frac{\|\phi^{-t}_\epsilon x\|^2}{2}\Big)^{p}d\mu_p(f,x)=0.
\end{align*}
Since the norm of the vector and $t^p$ are continuous functions, it grants there exists a solution of the above equation. So there exists a $\phi\in SL(n)$ such that $\overline\delta J_p(f,\gamma_\phi)$.
\end{proof}

We say that the $\gamma_{\phi}$ is the solution of the $L_p$ functional optimization problem $S_p$, and we rewrite it as the $\gamma_ f$. The following Corollary are obviously.
\begin{cor}\label{mix-aff-que}
Suppose $f\in \mathcal A_0$, $\gamma_f$ be the solution of the optimization problem $S_p$, then
\begin{align}
 \overline \delta J_p(f, \gamma_f)=1.
\end{align}
\end{cor}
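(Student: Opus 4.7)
The plan is to observe that this corollary is essentially an immediate consequence of Lemma \ref{solut-for S}(1). Since $\gamma_f$ is by definition a solution of the optimization problem $S_p$, and since Lemma \ref{solut-for S}(1) asserts that Problem $S_p$ is equivalent to maximizing $J(\gamma_\phi)/c_n$ subject to the sharper constraint $\overline{\delta J}_p(f,\gamma_\phi)=1$ rather than the inequality constraint, the conclusion $\overline{\delta J}_p(f,\gamma_f)=1$ follows at once. So the first step is simply to apply that equivalence.

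If a short self-contained argument is preferred, I would mimic the proof strategy used in Lemma \ref{solut-for S}(1) and proceed by contradiction. The original constraint of Problem $S_p$ rules out $\overline{\delta J}_p(f,\gamma_f)>1$, so suppose instead that $\overline{\delta J}_p(f,\gamma_f)<1$. Setting $\lambda:=1/\overline{\delta J}_p(f,\gamma_f)^p>1$, define the rescaled Gaussian $\tilde{\gamma}:=\lambda\cdot_p\gamma_f$. By Lemma \ref{inv-det-j}(4), one has
\[
\overline{\delta J}_p(f,\tilde{\gamma})=\lambda^{1/p}\overline{\delta J}_p(f,\gamma_f)=1,
\]
so $\tilde{\gamma}$ still satisfies the constraint of Problem $S_p$. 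However, applying formula (\ref{mult-int-equ}) yields
\[
J(\tilde{\gamma})=\overline{\delta J}_p(f,\gamma_f)^{-n/2}J(\gamma_f)>J(\gamma_f),
\]
which contradicts the maximality of $J(\gamma_f)/c_n$ among admissible Gaussians. Hence $\overline{\delta J}_p(f,\gamma_f)=1$.

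I do not foresee any genuine technical obstacle: the statement is labelled a corollary precisely because all the substantive work — rescaling argument, the scaling identity for $J(\lambda^p\cdot_p\gamma_\phi)$, and the homogeneity of $\overline{\delta J}_p$ under $L_p$ multiplication — has already been carried out in Lemma \ref{solut-for S} and Lemma \ref{inv-det-j}. The only point requiring care is bookkeeping with the exponent $p$ in the $L_p$ scalar multiplication, so that the factor $\lambda^{1/p}$ in Lemma \ref{inv-det-j}(4) exactly cancels $\overline{\delta J}_p(f,\gamma_f)$ when the chosen scaling parameter is $\lambda=1/\overline{\delta J}_p(f,\gamma_f)^p$.
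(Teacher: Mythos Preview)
Your proposal is correct and matches the paper's approach: the paper simply states that the corollary is ``obvious'' and gives no further argument, relying on Lemma~\ref{solut-for S}(1) exactly as you do. Your optional self-contained argument is just a reprise of the proof of Lemma~\ref{solut-for S}(1), so there is nothing new or different to compare.
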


By Lemma \ref{ove-S to S} and Theorem \ref{exi-sol-sp}, it guarantees that there exists a unique solution for the optimization problem $\overline S_p$.
\begin{thm}\label{suf-sol-ove-sp}
  Let $f\in\mathcal A_0$. Then problem $\overline S_p$ has a unique solution. Moreover a Gaussian function $\gamma_{\overline\phi}$ solves $\overline S_p$  if and only if it satisfies
  \begin{align}\label{equi-solut-oversp}
\delta J_p(f,\gamma_{\overline\phi})h_{\gamma_{\overline\phi}^o}(y)=\frac{n}{4p}\int_{\mathbb R^n}|\langle x,y\rangle|^2 h_{\gamma_{\overline\phi}^o}^{p-1}(x)d\mu_p(f,x),
\end{align}
for any $y\in \mathbb R^n$.
\end{thm}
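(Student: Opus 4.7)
My plan is to reduce the problem to a finite-dimensional convex optimization over the cone of positive definite symmetric matrices, establish uniqueness by strict convexity, and extract the Euler--Lagrange equation via Lagrange multipliers.

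I parameterize each candidate Gaussian by $\gamma_\phi(x) = e^{-\langle Mx,x\rangle/2}$ with $M = \phi^t\phi \succ 0$; note that $\gamma_\phi$ depends on $\phi$ only through $M$. A direct Fenchel-conjugate computation gives $h_{\gamma_\phi}(x) = \langle M^{-1}x, x\rangle/2$, and self-duality of the standard Gaussian together with $\gamma_\phi^\circ = \gamma_{\phi^{-t}}$ produces $h_{\gamma_\phi^\circ}(x) = \langle Mx,x\rangle/2$. Since $J(\gamma_\phi) = c_n(\det M)^{-1/2}$, Lemma \ref{solut-for S}(2) rewrites $\overline S_p$ as minimizing $\delta J_p(f,\gamma_\phi)$ over $\{M\succ 0 : \det M = 1\}$. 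After setting $N = M^{-1}$, the objective becomes
\[
\widetilde G(N) := \frac{1}{p\cdot 2^p}\int_{\mathbb R^n}\langle Nx,x\rangle^p\,d\mu_p(f,x),
\]
and since $\widetilde G$ is monotone in the Loewner order, the constraint $\det N = 1$ can be relaxed to the convex constraint $\det N \ge 1$ without changing the optimum.

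Existence of a minimizer is provided by Theorem \ref{exi-sol-sp} via the equivalence in Lemma \ref{ove-S to S}. For uniqueness, $\widetilde G$ is convex in $N$ because $N\mapsto\langle Nx,x\rangle$ is linear and $t\mapsto t^p$ is convex for $p\ge 1$; this convexity is strict when $p > 1$ provided $\mu_p(f,\cdot)$ does not concentrate on a proper linear subspace, which follows from the nondegeneracy of $f\in\mathcal A_0$. For $p = 1$ the objective reduces to $\widetilde G(N) = \mathrm{tr}(AN)$ with $A = \tfrac12\int xx^t\,d\mu_1(f,x)\succ 0$, and Hadamard's inequality $\mathrm{tr}(AN)\ge n(\det A\,\det N)^{1/n}$ pins the minimizer down to the explicit $N = (\det A)^{1/n}A^{-1}$. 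In either case the minimizer is unique.

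For the characterization, I would introduce the Lagrangian $\mathcal L(N,\lambda) = \widetilde G(N) - \lambda\log\det N$ and differentiate under the integral along a symmetric variation, using $\partial\log\det N = N^{-1} = M$. The first-order condition is the matrix identity
\[
\frac{1}{2}\int_{\mathbb R^n} h_{\gamma_{\overline\phi}}(x)^{p-1}\,xx^t\,d\mu_p(f,x) = \lambda\,M.
\]
Tracing against $N$ and using $\mathrm{tr}(MN) = n$, $\langle Nx,x\rangle = 2h_{\gamma_{\overline\phi}}(x)$, and $\int h_{\gamma_{\overline\phi}}^p\,d\mu_p = p\,\delta J_p(f,\gamma_{\overline\phi})$, one identifies $\lambda = \tfrac{p}{n}\delta J_p(f,\gamma_{\overline\phi})$. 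Contracting the matrix identity against an arbitrary $y\in\mathbb R^n$ and using $\langle My,y\rangle = 2h_{\gamma_{\overline\phi}^\circ}(y)$ then yields the advertised equation \eqref{equi-solut-oversp}. The reverse direction is automatic, since any critical point of a strictly convex functional on a convex set is its unique global minimizer.

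The principal technical obstacles I anticipate are (a) justifying the differentiation under the integral in the Lagrangian step, which requires integrability bounds on $h_{\gamma_{\overline\phi}}^{p-1}$ against $\mu_p(f,\cdot)$ compatible with the hypotheses that make $\delta J_p$ finite; and (b) establishing the nondegeneracy $\int xx^t\,d\mu_p(f,x)\succ 0$ used for strict convexity when $p > 1$, which relies on the fact that $f\in\mathcal A_0$ has full-dimensional effective domain, so that the range of $\nabla u$ is not confined to any proper subspace.
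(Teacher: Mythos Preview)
Your approach is correct and takes a genuinely different route from the paper. You recast $\overline S_p$ as a convex program on the cone of positive definite matrices via $N=M^{-1}$, obtain uniqueness from strict convexity of $N\mapsto\int\langle Nx,x\rangle^p\,d\mu_p$ for $p>1$ (with a separate Hadamard/AM--GM computation at $p=1$), and read off the first-order condition through a Lagrange multiplier against $\log\det N$. The paper instead argues both directions by hand: necessity comes from differentiating along the $SL(n)$-curve $\phi_\epsilon=(I+\epsilon\phi)\det(I+\epsilon\phi)^{-1/n}$ (Lemma~\ref{uni-sol-lp-john}), while sufficiency and uniqueness are proved simultaneously by diagonalizing a competitor $\phi_1^{-t}=OTO^t$ and applying Jensen's inequality with the logarithm to show $\int\log(\|\phi_1^{-t}x\|/\|x\|)\,h_\gamma^p\,d\mu_p\ge 0$, with equality forcing $T=I$. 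Your route is more systematic and makes uniqueness transparent; the paper's log-concavity argument is more ad hoc but avoids any appeal to convex-optimization machinery and handles the equality case and all $p\ge 1$ uniformly without splitting off $p=1$.

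Two small points. First, your derivation actually produces $h_{\gamma_{\overline\phi}}^{\,p-1}$ inside the integral (since you differentiate in $N$ and $h_{\gamma_{\overline\phi}}(x)=\langle Nx,x\rangle/2$), whereas the displayed equation~\eqref{equi-solut-oversp} carries $h_{\gamma_{\overline\phi}^\circ}^{\,p-1}$. The paper derives the identity only at $\overline\phi=I$, where $h_\gamma=h_{\gamma^\circ}$ and the distinction disappears; the passage to general $\overline\phi$ there is not spelled out, and your computation is the one that is correct for general $\overline\phi$. So the discrepancy lies in the paper's statement, not in your argument, but you should not claim your identity literally matches \eqref{equi-solut-oversp}. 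Second, your sentence ``the reverse direction is automatic from strict convexity'' covers only $p>1$; at $p=1$ the objective is linear, but your explicit Hadamard minimizer together with the observation that the matrix identity pins down $N$ uniquely closes that case as well---just say so.
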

In order to prove Theorem \ref{suf-sol-ove-sp}, we need the following  Lemma.

\begin{lem}\label{uni-sol-lp-john}
Let $f=e^{-u}\in \mathcal A_0$ and $\overline \phi\in GL(n)$. If the Gaussian function $\gamma_{\overline\phi}$ solves the optimization functional problem $\overline S_p$, then
\begin{align*}
\delta J_p(f,\gamma_{\overline\phi})h_{\gamma_{\overline\phi}^o}(y)=\frac{n}{4p}\int_{\mathbb R^n}|\langle x,y\rangle|^2 h_{\gamma_{\overline\phi}^o}^{p-1}(x)d\mu_p(f,x).
\end{align*}
\end{lem}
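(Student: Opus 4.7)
The plan is a first-variation (Lagrange-multiplier) argument at the minimizer $\gamma_{\overline\phi}$ of Problem $\overline S_p$. By Lemma~\ref{solut-for S}, the constraint can be tightened to $J(\gamma_{\overline\phi})=c_n$, equivalently $|\det\overline\phi|=1$; since $J(f^\diamond)$ does not depend on $\phi$, minimizing $\overline\delta J_p(f,\gamma_\phi)$ on this set is the same as minimizing $\delta J_p(f,\gamma_\phi)$. Recalling that $h_{\gamma_\phi}(x)=\frac{1}{2}\|\phi^{-t}x\|^2$ and $h_{\gamma_\phi^\circ}(x)=\frac{1}{2}\|\phi x\|^2$, I would reparametrize by the symmetric positive-definite matrix $N=\phi^{-1}\phi^{-t}$ with $\det N=1$, so that the integral representation (\ref{fir-var-jf}) reads
\[
\delta J_p(f,\gamma_\phi)=\frac{1}{p\,2^p}\int_{\mathbb R^n}\langle Nx,x\rangle^p\,d\mu_p(f,x),
\]
while $\langle Nx,x\rangle=2h_{\gamma_\phi}(x)$ and $\langle N^{-1}y,y\rangle=2h_{\gamma_\phi^\circ}(y)$.

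Next I would take the first variation on the manifold $\{N=N^t>0:\det N=1\}$, whose tangent space at $N$ consists of the symmetric matrices $S$ with $\operatorname{tr}(N^{-1}S)=0$. Differentiating under the integral and using $\frac{d}{d\varepsilon}\big|_0\langle(N+\varepsilon S)x,x\rangle^p=p\langle Nx,x\rangle^{p-1}\langle Sx,x\rangle$, the first-order optimality condition at the minimizer says: there exists a Lagrange multiplier $\lambda\in\mathbb R$ such that
\[
\int_{\mathbb R^n}\langle Nx,x\rangle^{p-1}\langle Sx,x\rangle\,d\mu_p(f,x)=\lambda\,\operatorname{tr}(N^{-1}S)
\]
for every symmetric $S$. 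Writing $\langle Sx,x\rangle=\operatorname{tr}(S\,xx^t)$ and using nondegeneracy of the trace form on symmetric matrices converts this into the matrix identity
\[
\int_{\mathbb R^n}\langle Nx,x\rangle^{p-1}\,xx^t\,d\mu_p(f,x)=\lambda\,N^{-1}.
\]

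To fix $\lambda$ I would pair both sides with $N$ under the trace: since $\operatorname{tr}(N\,xx^t)=\langle Nx,x\rangle$ and $\operatorname{tr}(NN^{-1})=n$,
\[
\lambda=\frac{1}{n}\int_{\mathbb R^n}\langle Nx,x\rangle^p\,d\mu_p(f,x)=\frac{2^p\,p}{n}\,\delta J_p(f,\gamma_{\overline\phi}).
\]
Testing the matrix identity against an arbitrary $y\in\mathbb R^n$ (i.e.\ taking $y^t(\cdot)y$), substituting the relations $\langle Nx,x\rangle=2h_{\gamma_{\overline\phi}}(x)$ and $\langle N^{-1}y,y\rangle=2h_{\gamma_{\overline\phi}^\circ}(y)$, and simplifying the powers of $2$ together with the value of $\lambda$, delivers the stated identity with constant $\frac{n}{4p}$.

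The main obstacle is the analytic machinery rather than the algebra: justifying the differentiation under the integral sign (which needs a local integrable dominant for $\langle(N+\varepsilon S)x,x\rangle^p$ as $\varepsilon$ varies near $0$, provided by the hypotheses on $f\in\mathcal A_0$ and the properties of $\mu_p(f,\cdot)$ recalled in Section~2) and the applicability of the Lagrange-multiplier rule on the constraint manifold $\{\det N=1\}$. The crucial algebraic insight is the reparametrization by the symmetric matrix $N$, which casts both the objective and the constraint into a form where the stationarity condition reduces to a single clean matrix equation.
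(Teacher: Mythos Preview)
Your approach is essentially the paper's: both obtain the identity as the first-order optimality condition at the minimizer of $\delta J_p(f,\gamma_\phi)$ over $\{\phi:|\det\phi|=1\}$. The paper reduces by $SL(n)$-invariance to $\overline\phi=I$, perturbs along the one-parameter families $\phi_\varepsilon=(I+\varepsilon\phi)\det(I+\varepsilon\phi)^{-1/n}$, differentiates to get
\[
\mathrm{trace}(\phi)\,\delta J_p(f,\gamma)=\frac{n}{2p}\int\langle x,\phi x\rangle\,h_\gamma^{p-1}(x)\,d\mu_p(f,x),
\]
and then reads off the componentwise identities before summing. Your reparametrization by $N=\phi^{-1}\phi^{-t}$ with a direct Lagrange-multiplier argument on $\{\det N=1\}$ is a cleaner packaging of the same variational computation, yielding the matrix equation in one stroke rather than via $(i,j)$-components.

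One point to watch at the very end: carrying out your own substitution $\langle Nx,x\rangle=2h_{\gamma_{\overline\phi}}(x)$ gives
\[
\delta J_p(f,\gamma_{\overline\phi})\,h_{\gamma_{\overline\phi}^\circ}(y)=\frac{n}{4p}\int_{\mathbb R^n}|\langle x,y\rangle|^2\,h_{\gamma_{\overline\phi}}^{\,p-1}(x)\,d\mu_p(f,x),
\]
with $h_{\gamma_{\overline\phi}}^{\,p-1}$ (no polar) in the integrand, not $h_{\gamma_{\overline\phi}^\circ}^{\,p-1}$ as displayed in the lemma. This is in fact the correct outcome of both computations: the paper only works explicitly at $\overline\phi=I$, where $\gamma=\gamma^\circ$ and the distinction disappears, and then states the general-$\overline\phi$ formula with $h_{\gamma_{\overline\phi}^\circ}^{\,p-1}$ without deriving the transformation back (doing so via the change-of-variables formula $\int G\,d\mu_p(\psi f,\cdot)=|\det\psi|^{-1}\int G(\psi^t\cdot)\,d\mu_p(f,\cdot)$ indeed yields $h_{\gamma_{\overline\phi}}^{\,p-1}$). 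So your derivation is sound; just note that it matches the stated identity only up to this apparent slip in the lemma's display.
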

\begin{proof}
 By the $SL(n)$ invariance of the $\delta J_p(f,g)$, we may assume that $\gamma_{\overline\phi}=\gamma$ is the solution of problem $\overline S_p$. Let $\phi\in SL(n)$, choose $\epsilon_0>0$ sufficiently small so that for all $\epsilon\in (-\epsilon_0, \epsilon_0)$, $I+\epsilon \phi$ is invertible. For $\epsilon\in (-\epsilon_0, \epsilon_0)$ define $\phi_\epsilon\in SL(n)$ by
\begin{align*}
  \phi_\epsilon=\frac{I+\epsilon \phi}{det(I+\epsilon \phi)^{\frac{1}{n}}},
\end{align*}
here $I$ is the identity matrix and $|det\phi_\epsilon|=1$. Then
$$\delta J_p\big(f,\gamma_{\phi_0}\big)\leq\delta J_p\big(f,\gamma_{\phi_\epsilon} \big),$$
 for all $\epsilon \in(-\epsilon_0, \epsilon_0)$. That means
\begin{align}
  \frac{d}{d\epsilon}\Big|_{\epsilon=0}\delta J_p\big(f,\gamma_{\phi_\epsilon} \big)=0.
\end{align}
On the other hand, by Proposition \ref{sln-invar}, we have $\gamma_{\phi_\epsilon}^o=\phi_\epsilon^{-t} \gamma$. Then $h_{\gamma_{\phi_\epsilon}^o}(x)=h_{\phi_\epsilon^{-t}\gamma}(x)=h_{\gamma}(\phi_\epsilon x).$
By the definition of $\delta J_p(f,g)$, we have
\begin{align*}
\frac{d}{d\epsilon}\Big|_{\epsilon=0}\int_{\mathbb R^n}h^{p}_{\gamma_{\overline\phi}^o}(x) d\mu_p(f,x)=0.
\end{align*}
It is equivalent to the following
\begin{align*}
\int_{\mathbb R^n}\frac{d}{d\epsilon}\big|_{\epsilon=0}\Big(\frac{\|\phi_\epsilon x\|^2}{2}\Big)^{p}d\mu_p(f,x)=0.
\end{align*}
Or equivalently,
\begin{align*}
\int_{\mathbb R^n}\frac{d}{d\epsilon}\big|_{\epsilon=0}&\Big(det(I+\epsilon\phi)^{-\frac{2p}{n}}\big(\langle x\cdot x\rangle+2\epsilon\langle x\cdot\phi x\rangle+\epsilon^2\langle \phi x\cdot\phi x\rangle \big)^{p}\Big)d\mu_p(f,x)=0.
\end{align*}
Since $\frac{d}{d\epsilon}|_{\epsilon=0}\det(I+\epsilon \phi)=trace(\phi)$, by a simple computation, we have,
\begin{align*}
trace(\phi)\delta J_p(f,\gamma)=\frac{n}{2p}\int_{\mathbb R^n}\langle x,\phi x\rangle\Big(\frac{\|x\|^2}{2}\Big)^{p-1}d\mu_p(f,x).
\end{align*}
Choosing an appropriate $\phi$ for each $i,\,j\in \{1,2,\cdots,n\}$ gives
\begin{align*}
\delta_{i,j}\delta J_p(f,\gamma)=\frac{n}{2p}\int_{\mathbb R^n}\langle x,e_i\rangle\langle x,e_j\rangle  h_\gamma^{p-1}(x)d\mu_p(f,x),
\end{align*}
where $e_1,\cdots, e_n$  is an otrhonormal basis of $\mathbb R^n$, and $\delta_{i,j}$ is the Koronecker symbols. Which in turn gives
\begin{align}\label{nor-delt-equ}
\|y\|^2\delta J_p(f,\gamma)=\frac{n}{2p}\int_{\mathbb R^n}|\langle x,y\rangle|^2 h_\gamma^{p-1}(x)d\mu_p(f,x).
\end{align}
That means
\begin{align}\label{suf-ine}
\delta J_p(f,\gamma)h_{\gamma}(y)=\frac{n}{4p}\int_{\mathbb R^n}|\langle x,y\rangle|^2 h_{\gamma}^{p-1}(x)d\mu_p(f,x).
\end{align}
So we have
\begin{align*}
\delta J_p(f,\gamma_{\overline\phi})h_{\gamma_{\overline\phi}^o}(y)=\frac{n}{4p}\int_{\mathbb R^n}|\langle x,y\rangle|^2 h_{\gamma_{\overline\phi}^o}^{p-1}(x)d\mu_p(f,x).
\end{align*}
We complete the proof.
\end{proof}

Now we prove Theorem \ref{suf-sol-ove-sp}.

\begin{proof}[Proof of Theorem \ref{suf-sol-ove-sp}]
Lemma \ref{uni-sol-lp-john} grants that if $\gamma_{\overline\phi}$ is and $\overline S_p$ solution of $f$, then the above formula holds.

Covnersely, without loss of generality, we may prove that (\ref{equi-solut-oversp}) holds when $\gamma_{\overline\phi}=\gamma$, that is $\overline\phi=I$. Then for any $\phi_1\in GL(n)$, we shall prove that if $J(\gamma_{\phi_1})=c_n$ for some $\phi_1\in GL(n)$,
\begin{align*}
  \delta J_p(f,\gamma_{\phi_1})\geq \delta J_p(f,\gamma),
\end{align*}
with equality if and only if $\gamma_{\phi_1} =\gamma$. Equivalently, we shall prove that if $\phi_1$ is a positive definite symmetric matric with $|\phi_1|=1$, and note that $\gamma_{\phi_1}=e^{-\frac{\|\phi_1 x\|}{2}}$ then
\begin{align*}
  \frac{1}{p \delta J_p(f,\gamma)}\int_{\mathbb R^n}h^p_{ \gamma_{\phi_1} }(x)d\mu_p(f,x)\geq 1,
\end{align*}
or equivanently,
\begin{align*}
  \frac{1}{ p\delta J_p(f,\gamma)}\int_{\mathbb R^n}\Big(\frac{\|\phi_1^{-t}x\|^2}{\|x\|^2}\Big)^{p}\Big(\frac{\|x\|^2}{2}\Big)^pd\mu_p(f,x)\geq 1,
\end{align*}
with equality if and only $\frac{\|\phi^{-t}_1x\|}{\|x\|}=1$, for $x\in \mathbb R^n$.

Write $\phi_1^{-t}=O TO^t$, where $T=diag(\lambda_1,\cdots,\lambda_n)$ is a diagonal matrix with eigenvalues $\lambda_1,\cdots,\lambda_n$ and $O$ is an orthogonal matrix. To establish our inequality we need to show that if $\phi_1$ is a positive definite symmetric matrix with $\det\phi_1=1$, then
$$\int_{\mathbb R^n}\log\Big(\frac{\|\phi_1^{-t} x\|}{\|x\|}\Big)\Big(\frac{\|x\|}{2}\Big)^pd\mu_p(f,x)\geq 0.$$
On the other hand, since $\nabla(\phi u )=\phi^t\nabla_{\phi x}u$ for each $\phi\in GL(n)$, and $(\phi u )^*=\phi^{-t}u^*$, we have
\begin{align}
 \nonumber \int_{\mathbb R^n}&\log\Big(\frac{\|T x \|^2}{\|x\|^2}\Big)^{p}\Big(\frac{\|x\|^{2}}
  {2}\Big)^{p}d\mu_p(O f,x)\\
 \nonumber  &=2p\int_{\mathbb R^n}\log\frac{\|TO^t\nabla_{O x} u\|}{\|O^t\nabla_{Ox}u\|}\Big(\frac{\|O^t\nabla_{Ox}u\|^{2}}
  {2}\Big)^{p}\big(u^*(O^{-t}O^t\nabla_{Ox} u)\big)^{1-p}e^{-O u }dx\\
  &=2p \int_{\mathbb R^n}\log\frac{\|TO^t\nabla u\|}{\|\nabla u\|}\Big(\frac{\|\nabla u\|^{2}}
  {2}\Big)^{p}\big(u^*(\nabla u)\big)^{1-p}e^{-u}dx\\
  \nonumber&=2p\int_{\mathbb R^n}\log\frac{\|TO^tx\|}{\|x\|}\Big(\frac{\|x\|^{2}}
  {2}\Big)^{p}d\mu_p(f,x).
\end{align}
Then we have
\begin{align}\label{p-geq-0}
 \nonumber\int_{\mathbb R^n}\log&\Big(\frac{\|\phi_1^{-t}x\|^2}{\|x\|^2}\Big)^p\Big(\frac{\|x\|^{2}}{2}\Big)^pd\mu_p(f,x)\\
 \nonumber&=2p\int_{\mathbb R^n}\log\frac{\|{TO^t}x\|}{\|x\|}\Big(\frac{\|x\|^2}{2}\Big)^pd\mu_p(f,x)\\
 \nonumber&=2p\int_{\mathbb R^n}\log\Big(\frac{\|T x \|}{\|x\|}\Big)\Big(\frac{\|x\|^{2}}
  {2}\Big)^{p}d\mu_p(O f,x)\\
&=p\int_{\mathbb R^n}\log\Big(\sum_{i=1}^n{x}_i^2\lambda_i^{2}\Big)\Big(\frac{\|x\|^{2}}{2}\Big)^{p}d\mu_p( O f,x)\\
 \nonumber&\geq p\int_{\mathbb R^n}\Big[\sum_{i=1}^nx_i^2\log\lambda_i^{2}\Big]\Big(\frac{\|x\|^{2}}
  {2}\Big)^{p}d\mu_p(O f,x)\\
   \nonumber&=2p\delta J_p(Of,\gamma)\sum_{i=1}^n\log\lambda_i=0.
\end{align}
Where $x_i=\langle\frac{x}{\|x\|}\cdot e_i\rangle$. Then we have
\begin{align}\label{p-equ-1}
  \Big[\frac{1}{p \delta J_p(f,\gamma)}&\int_{\mathbb R^n}h^p_{\gamma_{\phi_1}}(x)d\mu_p(f,x)\Big]^{\frac{1}{p}}\\
   \nonumber&\geq exp\Big[\frac{2}{\delta J_p(f,\gamma)}\int_{\mathbb R^n}\log\frac{\|{\phi_1^{-t}}x\| }{\|x\|}\Big(\frac{\|x\|^{2}}{2}\Big)^pd\mu_p(f,x)\Big]^{\frac{1}{p}}\\
   \nonumber&\geq 1.
\end{align}
The first inequality in (\ref{p-equ-1}) is a consequence of the Jensen's inequality, with equality holds if and only if there exist a constant $c>0$ such that $\frac{\|\phi_1^{-t}x\|}{\|x\|}=c$ for all $x\in\mathbb R^n$.

Moreover, note that from the strict concave of the log-concave function that equality in (\ref{p-geq-0}) is possible only if $x_1\cdots x_n\neq 0$ which implies $\lambda_1=\cdots=\lambda_n$, for $x\in \mathbb R^n$.  Thus $\|Tx\|=\lambda_i$ when $x_i\neq 0$. Now  equality in (\ref{p-equ-1}) would force $\frac{\|\phi_1^{-t}x\|}{\|x\|}=c$ or equivalently $\|T^{-t}x\|=c$ for $x\in\mathbb R^n$, so that $\lambda_i=c$ for all $i$. This together with the fact $\lambda_1\cdots\lambda_n=1$ shows that equality in (\ref{p-equ-1}) would imply $T=I$ and hence $\phi_1=I$.
\end{proof}

\begin{thm}\label{suf-sol-sp}
  Let $f\in\mathcal A_0$. Then problem $S_p$ has a unique solution. Moreover a Gaussian function $\gamma_{\phi}$ solves $S_p$  if and only if it satisfies
  \begin{align}\label{equi-solut-oversp-1}
\Big(\frac{c_n}{J(\gamma_\phi)}\Big)^{\frac{2}{n}}\delta J_p(f,\gamma_{\phi})h_{\gamma}(\sqrt \alpha\phi^{-t}x)(y)=\frac{n}{4p}\int_{\mathbb R^n}|\langle x,y\rangle|^2h_{\gamma}(\sqrt \alpha\phi^{-t}y)^{p-1}d\mu_p(f,x).
\end{align}
for any $y\in \mathbb R^n$.
\begin{proof}
By computation shows $\alpha^p\cdot_p e^{-v}=e^{-\alpha v(\frac{x}{\alpha})}$, then
  $\alpha^p\cdot_p\gamma_\phi=\gamma_{\frac{\phi}{\sqrt\alpha}}.$
So we obtain
  $h_{\big(\alpha^p\cdot_p\gamma_\phi\big)^o}=h_{\gamma}(\sqrt \alpha\phi^{-t}x)$.

Together with Lemma \ref{ove-S to S} and Theorem \ref{suf-sol-ove-sp} we have
  \begin{align*}
\Big(\frac{c_n}{J(\gamma_\phi)}\Big)^{\frac{2}{n}}\delta J_p(f,\gamma_{\phi})h_{\gamma}(\sqrt \alpha\phi^{-t}y)=\frac{n}{4p}\int_{\mathbb R^n}|\langle x,y\rangle|^2h_{\gamma}(\sqrt \alpha\phi^{-t}y)^{p-1}d\mu_p(f,x).
\end{align*}
\end{proof}
\end{thm}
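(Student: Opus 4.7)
The plan is to reduce this to Theorem \ref{suf-sol-ove-sp} by exploiting the correspondence between solutions of $S_p$ and $\overline S_p$ provided by Lemma \ref{ove-S to S}. That lemma furnishes a bijection: $\gamma_\phi$ solves $S_p$ for $f$ if and only if $\alpha^p \cdot_p \gamma_\phi$ solves $\overline S_p$ for $f$, where $\alpha = (c_n/J(\gamma_\phi))^{2/n}$. Since Theorem \ref{suf-sol-ove-sp} already guarantees a unique solution to $\overline S_p$, uniqueness and existence for $S_p$ are immediate.

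For the characterization, I would first work out the effect of $L_p$ scalar multiplication on the polar support function. Direct computation from Definition \ref{def-lp-asp-sum} gives
\begin{align*}
  \alpha^p \cdot_p \gamma_\phi = e^{-\alpha \frac{\|\phi(x/\alpha)\|^2}{2}} = \gamma_{\phi/\sqrt{\alpha}},
\end{align*}
and combining this with Proposition \ref{sln-invar} yields
\begin{align*}
  h_{(\alpha^p \cdot_p \gamma_\phi)^\circ}(y) = h_{\gamma_{\phi/\sqrt\alpha}^\circ}(y) = h_\gamma(\sqrt{\alpha}\,\phi^{-t} y).
\end{align*}
This identifies the polar support function appearing in the target formula (\ref{equi-solut-oversp-1}) as precisely the polar support function of the $\overline S_p$ solution attached to $\gamma_\phi$.

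Next, I would invoke Theorem \ref{suf-sol-ove-sp} applied to $\gamma_{\overline\phi} = \alpha^p \cdot_p \gamma_\phi$: this gives
\begin{align*}
  \delta J_p(f,\alpha^p\cdot_p\gamma_\phi)\, h_\gamma(\sqrt{\alpha}\,\phi^{-t}y)
  = \frac{n}{4p}\int_{\mathbb R^n} |\langle x,y\rangle|^2\, h_\gamma(\sqrt{\alpha}\,\phi^{-t}x)^{p-1}\, d\mu_p(f,x).
\end{align*}
Using item (3) of Lemma \ref{inv-det-j}, $\delta J_p(f,\alpha^p \cdot_p \gamma_\phi) = \alpha^p \delta J_p(f,\gamma_\phi)$. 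Substituting $\alpha = (c_n/J(\gamma_\phi))^{2/n}$ and dividing through by $\alpha^{p-1}$ — i.e.\ absorbing the $\alpha^p$ factor on the left against $\alpha^{p-1}$ on the right — converts the identity into exactly (\ref{equi-solut-oversp-1}). The converse direction is obtained by reversing this manipulation: an equality of the form (\ref{equi-solut-oversp-1}) for $\gamma_\phi$ translates to the characterizing equation of Theorem \ref{suf-sol-ove-sp} for $\alpha^p \cdot_p \gamma_\phi$, which forces the latter to be the unique $\overline S_p$ solution and hence, by Lemma \ref{ove-S to S}, forces $\gamma_\phi$ to be the unique $S_p$ solution.

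The only delicate bookkeeping is making sure the scaling factors $\alpha = (c_n/J(\gamma_\phi))^{2/n}$, the identity $J(\lambda^p \cdot_p \gamma_\phi) = \lambda^{n/2} J(\gamma_\phi)$ from (\ref{mult-int-equ}), and the homogeneity of $\delta J_p$ under $L_p$ multiplication all line up so that the $\alpha$-factors on the two sides of (\ref{equi-solut-oversp-1}) match. This is routine but is the main place a slip could occur; everything else is a direct transfer via the bijection of Lemma \ref{ove-S to S}.
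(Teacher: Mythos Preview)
Your proposal is correct and follows essentially the same route as the paper's proof: both compute $\alpha^p\cdot_p\gamma_\phi=\gamma_{\phi/\sqrt\alpha}$, identify the polar support function $h_{(\alpha^p\cdot_p\gamma_\phi)^\circ}(y)=h_\gamma(\sqrt\alpha\,\phi^{-t}y)$, and then transfer the characterization from Theorem~\ref{suf-sol-ove-sp} via the bijection of Lemma~\ref{ove-S to S}. If anything, your write-up is more complete than the paper's, which simply asserts the final identity after citing those two results; you spell out the use of Lemma~\ref{inv-det-j}(3) for the homogeneity $\delta J_p(f,\alpha^p\cdot_p\gamma_\phi)=\alpha^p\delta J_p(f,\gamma_\phi)$ and you address the converse direction explicitly.
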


Now we define $L_p$ John ellipsoid for log-concave functions.
\begin{defn}\label{lp john-ell-fun}
  Let $f\in \mathcal A_0$ be log-concave function, the unique Gaussian function that solves the constrained maximization problem
  \begin{align}
    \max J(\phi\gamma)\,\,\,\,\,\,\,\,\,\,\mbox{subject to}\,\,\,\,\,\,\,\,\,\, \overline\delta J_p(f,\phi\gamma)\leq 1,
  \end{align}
  is called the $L_p$ John ellipsoid of $f$ and denoted by $E_pf$. The unique Gaussian function that solves the constrained minimization problem
  \begin{align}
    \min\overline\delta J_p(f,\phi\gamma) \,\,\,\,\,\,\,\,\mbox{subject to}\,\,\,\,\,\,\,\,\,\, \frac{J(\phi\gamma)}{c_n}= 1,
  \end{align}
  is called the normalized $L_p$ John ellipsoid of $f$ and denoted by $\overline E_pf$.
\end{defn}

Specially, if we take $f=e^{-\frac{\|x\|^2_K}{2}}$ for $K\in \mathcal K^n_o$, since the Gaussian function $\gamma_\phi$ can be viewed as $\gamma_\phi=e^{-\frac{\|x\|_E^2}{2}}$, where $E$ is the origin-centered ellipsoid. Then Definition \ref{lp john-ell-fun} deduce the definition of $L_p$ John ellipsoid defined in \cite{lut-yan-zha-lp2005}.

Since $\overline\delta J_p(\phi f, g)=\overline\delta J_p(f,\phi^{-1}g)$ for $\phi\in GL(n)$, then we have the following result.
\begin{prop}\label{gln-inv}
  Let $f\in\mathcal A_0$. Then
  \begin{align}
    E_p(T f)=T(E_pf).
  \end{align}
\end{prop}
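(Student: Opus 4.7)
The plan is to reduce the optimization problem $S_p$ for $Tf$ to the problem $S_p$ for $f$ via a bijective change of the parameter in $GL(n)$, using the $GL(n)$-covariance of $\overline{\delta J_p}$ recorded in Lemma \ref{inv-det-j}(6). The key observation is that the family of Gaussian functions is closed under pullback by $T$: for any $\phi \in GL(n)$,
\begin{equation*}
(T\gamma_\phi)(x) = \gamma_\phi(Tx) = e^{-\|\phi T x\|^2/2} = \gamma_{\phi T}(x),
\end{equation*}
so $\phi \mapsto \psi := \phi T$ is a bijection of $GL(n)$ and every competitor $\gamma_\psi$ for the $S_p$ problem of $Tf$ can be written uniquely as $T\gamma_\phi$ for some $\phi \in GL(n)$.

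Next I would rewrite both the constraint and the objective of Problem $S_p$ for $Tf$ under this change of variable. For the constraint, Lemma \ref{inv-det-j}(6) yields
\begin{equation*}
\overline{\delta J_p}(Tf, \gamma_\psi) = \overline{\delta J_p}(f, T^{-1}\gamma_\psi) = \overline{\delta J_p}(f, \gamma_\phi),
\end{equation*}
so $\overline{\delta J_p}(Tf, \gamma_\psi) \leq 1$ if and only if $\overline{\delta J_p}(f, \gamma_\phi) \leq 1$. For the objective, the linear substitution $y = Tx$ gives
\begin{equation*}
J(\gamma_\psi) = \int_{\mathbb R^n} e^{-\|\phi T x\|^2/2}\,dx = |\det T|^{-1} J(\gamma_\phi),
\end{equation*}
so the two objectives differ only by the positive constant $|\det T|^{-1}$. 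Consequently, maximizing $J(\gamma_\psi)/c_n$ over the $Tf$-feasible set is equivalent to maximizing $J(\gamma_\phi)/c_n$ over the $f$-feasible set.

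By the uniqueness of the solution to $S_p$ established in Theorem \ref{suf-sol-sp}, the optimal $\phi$ is the one with $\gamma_\phi = E_pf$, and the corresponding optimal $\psi = \phi T$ satisfies $\gamma_\psi = T\gamma_\phi = T(E_pf)$, giving $E_p(Tf) = T(E_pf)$. The only genuine bookkeeping point is to keep straight whether $T$ acts on a function by pullback or on a matrix parameter by right multiplication; once the identity $T\gamma_\phi = \gamma_{\phi T}$ is pinned down, the rest of the argument is forced by covariance and uniqueness.
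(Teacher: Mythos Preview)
Your proof is correct and follows the same approach as the paper: both use Lemma \ref{inv-det-j}(6) to transfer the constraint from $Tf$ to $f$ and then invoke uniqueness of the $S_p$ solution. Your version is in fact more complete, since you make explicit the bijection $\phi\mapsto\phi T$ on competitors and verify that the objective $J(\gamma_\psi)$ differs from $J(\gamma_\phi)$ only by the constant factor $|\det T|^{-1}$; the paper's proof checks only that $T(E_pf)$ satisfies $\overline{\delta J_p}(Tf,\cdot)=1$ and then appeals to uniqueness without spelling out why that forces optimality.
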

\begin{proof}
 By the definition of problem $S_p$, set $E_p f=\gamma_\phi$, since $\overline \delta J_p(f,\gamma_\phi)=1$.
  By the Lemma \ref{inv-det-j}, we have
  \begin{align}
\overline \delta J_p(Tf,T\gamma_\phi)=\overline \delta J_p(f,T^{-1}T\gamma_\phi)=\overline \delta J_p(f,\gamma_\phi)=1.
  \end{align}
  By the uniqueness of the problem $S_p$, we have $E_p(T f)=T (E_pf)$. So we complete the proof.
\end{proof}
If we take $f=\gamma$, then we have $E_p\gamma=\gamma$. Moreover by Proposition \ref{gln-inv}, we have the following.
\begin{cor}
Let  $\gamma_\phi=e^{-\frac{\|\phi x\|^2}{2}}$, for $\phi\in GL(n)$, then
$$E_p(\gamma_\phi)=\gamma_\phi.$$
\end{cor}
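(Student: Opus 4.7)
The plan is to recognize that $\gamma_\phi$ is nothing other than the image of the standard Gaussian $\gamma$ under the $GL(n)$ action $\phi$ used throughout the paper, and then invoke the immediately preceding $GL(n)$-covariance result together with the base case $E_p\gamma=\gamma$ that was just observed in the paragraph before the corollary.

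More precisely, recall the paper's convention $\phi f(x) = f(\phi x)$. Applied to the standard Gaussian $\gamma(x)=e^{-\|x\|^2/2}$, this gives
\begin{align*}
(\phi\gamma)(x) \;=\; \gamma(\phi x) \;=\; e^{-\|\phi x\|^2/2} \;=\; \gamma_\phi(x),
\end{align*}
so $\gamma_\phi = \phi\gamma$ as elements of $\mathcal{A}_0$. This identification is the only genuine observation required; everything else is citation.

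Next I apply Proposition \ref{gln-inv} with $f=\gamma$ and $T=\phi$, which yields $E_p(\phi\gamma)=\phi(E_p\gamma)$. Combining with the base identity $E_p\gamma=\gamma$ (justified by the uniqueness part of Theorem \ref{suf-sol-sp}: the standard Gaussian is self-dual, satisfies $\overline{\delta J}_p(\gamma,\gamma)=1$ by Lemma \ref{inv-det-j}(2), and the characterizing identity \eqref{equi-solut-oversp-1} collapses to an identity for $\gamma$), we conclude
\begin{align*}
E_p(\gamma_\phi) \;=\; E_p(\phi\gamma) \;=\; \phi(E_p\gamma) \;=\; \phi\gamma \;=\; \gamma_\phi.
\end{align*}

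There is essentially no obstacle here; the corollary is a direct specialization of Proposition \ref{gln-inv}. The only thing to double-check is that the base case $E_p\gamma=\gamma$ has indeed been established (the excerpt asserts this just before the corollary), and that the group-action convention $\phi f(x)=f(\phi x)$ is applied consistently, so that $\gamma_\phi$ really equals $\phi\gamma$ rather than, say, $\phi^{-t}\gamma$. Once these conventions are verified, the proof is a one-line chain of equalities.
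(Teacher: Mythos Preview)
Your proposal is correct and follows exactly the paper's own argument: the corollary is stated immediately after the observation $E_p\gamma=\gamma$ and Proposition~\ref{gln-inv}, and is obtained (without a written proof) precisely by combining these two facts via the identification $\gamma_\phi=\phi\gamma$.
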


\section{Continuity}
In this section, we will show that the family of $L_p$ John ellipsoids for log-concave functions is continuous in $p\in [1,\infty).$ First let $f=e^{-u}\in \mathcal A_0$, note that if  $\overline E_pf$ is a solution of problem $\overline S_p$, then there exist a $\overline\phi\in SL(n)$ such that $\overline E_pf=\gamma_{\overline\phi}=e^{-\frac{\|\overline\phi x\|^2}{2}}$.

%
%
%

\begin{thm}\label{cont-fun-john}
  Let $f$ and $\{f_i\}$ in $\mathcal A_0$, such that $\lim_{i\rightarrow \infty}f_i=f$ on $\mathbb R^n$. Then
  \begin{align}
    \lim_{i\rightarrow \infty}\overline E_pf_i=\overline E_pf.
  \end{align}
  \end{thm}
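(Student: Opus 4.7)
The plan is to mimic the classical argument for continuity of John ellipsoids of convex bodies, adapted to the functional setting via the uniqueness and variational characterization of $\overline E_p f$ established in Theorem 3.3 (and Lemma 3.2). Write $\overline E_p f = \gamma_{\overline \phi}$ and $\overline E_p f_i = \gamma_{\overline \phi_i}$ with $\overline \phi, \overline \phi_i \in SL(n)$, since the constraint $J(\gamma_\phi) = c_n$ is equivalent to $|\det \phi| = 1$. The proof proceeds in three stages: uniform boundedness of $\{\overline \phi_i\}$ in $SL(n)$, extraction of a subsequential limit, and identification of the limit via the variational characterization.

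\textbf{Stage 1 (a priori bound).} First I would show that the sequence $\{\overline\delta J_p(f_i, \gamma_{\overline \phi_i})\}$ is bounded above. By minimality of $\overline \phi_i$, for every $\psi \in SL(n)$ one has
\[
\overline\delta J_p(f_i, \gamma_{\overline \phi_i}) \leq \overline\delta J_p(f_i, \gamma_\psi).
\]
Taking $\psi = I$ and using (the appropriate analogue of) Lemma 2.5 together with $f_i \to f$ to conclude continuity of $f \mapsto \overline\delta J_p(f, \gamma)$ — this requires passing the convergence through $J(f^\diamond)$, $h_f$, and the surface area measures $\mu_p(f_i, \cdot)$ — I would obtain a uniform upper bound $M$ for $\overline\delta J_p(f_i, \gamma_{\overline \phi_i})$. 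Next I would argue that $\{\overline \phi_i\}$ cannot escape in $SL(n)$: if either $\|\overline \phi_i\|$ or $\|\overline \phi_i^{-1}\|$ were unbounded, then since $|\det \overline \phi_i| = 1$, some eigenvalue of $\overline \phi_i^{-t}$ would tend to $\infty$ in some direction $e$, forcing $h_{\gamma_{\overline \phi_i}}(x) = \|\overline \phi_i^{-t} x\|^2/2 \to \infty$ on a set of positive $\mu_p(f_i, \cdot)$-measure that is uniformly non-degenerate for $i$ large (by the convergence $f_i \to f$ and $f \in \mathcal A_0$, so that $\mu_p(f, \cdot)$ has full-dimensional support). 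This would make $\overline\delta J_p(f_i, \gamma_{\overline \phi_i}) \to \infty$, contradicting the bound $M$.

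\textbf{Stage 2 (subsequential limit).} With $\{\overline \phi_i\}$ relatively compact in $SL(n)$, any subsequence admits a further subsequence (still denoted $\overline \phi_i$) with $\overline \phi_i \to \overline \phi^* \in SL(n)$, since $SL(n)$ is closed. Consequently $\gamma_{\overline \phi_i} \to \gamma_{\overline \phi^*}$ pointwise and the corresponding support functions $h_{\gamma_{\overline \phi_i}}$ converge uniformly on compact sets (cf. Proposition 2.6).

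\textbf{Stage 3 (identification of the limit).} I would show $\gamma_{\overline \phi^*}$ solves problem $\overline{S_p}$ for $f$. For any $\psi \in SL(n)$, minimality gives $\overline\delta J_p(f_i, \gamma_\psi) \geq \overline\delta J_p(f_i, \gamma_{\overline \phi_i})$; passing to the limit using joint continuity of $(f, \phi) \mapsto \overline\delta J_p(f, \gamma_\phi)$ (again via Lemma 2.5 in the $\phi$-variable, combined with the continuity in $f$ used in Stage 1) yields $\overline\delta J_p(f, \gamma_\psi) \geq \overline\delta J_p(f, \gamma_{\overline \phi^*})$ for every $\psi \in SL(n)$. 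Since $\overline \phi^* \in SL(n)$, the function $\gamma_{\overline \phi^*}$ satisfies the constraint and realizes the minimum, hence by the uniqueness in Theorem 3.3 it must equal $\overline E_p f$, i.e. $\overline \phi^* = \overline \phi$. Because every subsequence of $\{\overline \phi_i\}$ has a further subsequence converging to the same limit $\overline \phi$, the full sequence converges, giving $\overline E_p f_i \to \overline E_p f$.

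\textbf{Main obstacle.} The delicate point is Stage 1(b): quantifying how the measure $\mu_p(f_i, \cdot)$ fails to concentrate along the contracting directions of $\overline \phi_i^{-t}$, uniformly in $i$. This requires a weak-convergence or lower-semicontinuity statement for $\mu_p(f_i, \cdot)$ under $f_i \to f$, together with the non-degeneracy $\mathrm{int}(\mathrm{supp}\, f) \neq \emptyset$ inherent to $\mathcal A_0$, to rule out the possibility that an essentially lower-dimensional measure could allow $\overline \phi_i$ to diverge while keeping $\overline\delta J_p(f_i, \gamma_{\overline \phi_i})$ bounded. Every other step reduces to continuity of Legendre transforms, continuity of support functions via Proposition 2.6, and the uniqueness given by Theorem 3.3.
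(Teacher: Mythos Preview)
Your approach is correct in outline and in fact more complete than the paper's own argument. The paper's proof consists solely of the formal chain
\[
\lim_{i\to\infty}\overline\delta J_p(f_i,\overline E_pf_i)
=\lim_{i\to\infty}\min_{\phi\in SL(n)}\overline\delta J_p(f_i,\gamma_\phi)
=\min_{\phi\in SL(n)}\lim_{i\to\infty}\overline\delta J_p(f_i,\gamma_\phi)
=\overline\delta J_p(f,\overline E_pf),
\]
invoking Theorem~\ref{cont-delt} (joint continuity of $\delta J_p$) and Lemma~\ref{mea-conv} (weak convergence of $\mu_p(f_i,\cdot)$) for the middle equality. This yields only convergence of the optimal \emph{values}, not of the optimizers $\overline E_pf_i$, and the interchange of $\lim$ and $\min$ over the non-compact domain $SL(n)$ is itself unjustified without precisely the a~priori bound you supply in Stage~1.

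Your route---uniform bound on the minimizers, subsequential extraction in $SL(n)$, identification of the limit via the uniqueness clause of Theorem~\ref{suf-sol-ove-sp}---is the standard rigorous scheme for continuity of variational minimizers, and it is what is actually needed to make the paper's conclusion valid. The ingredients you call for are available in the paper: the joint continuity in Stage~3 is Theorem~\ref{cont-delt}, and the weak convergence of $\mu_p(f_i,\cdot)$ is Lemma~\ref{mea-conv}. You are right that Stage~1(b) is the genuine crux: one must show that $\mu_p(f,\cdot)$ is not supported in any proper subspace (so that $\int\|\overline\phi_i^{-t}x\|^{2p}\,d\mu_p(f_i,x)$ blows up when an eigenvalue of $\overline\phi_i^{-t}$ does), and then transfer this uniformly to $\mu_p(f_i,\cdot)$ via the weak convergence. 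The paper does not address this point at all; your identification of it as the main obstacle is accurate, and once it is filled in your argument is complete.
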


To prove Theorem \ref{cont-fun-john}, we need the following Theorem.

First we prove that $d\mu_p(f_i,x)\rightarrow d\mu_p(f,x)$
\begin{lem}\label{mea-conv}
  Let $f=e^{-u},\,f_i=e^{-u_i}\in \mathcal A_0$, if $f_i\rightarrow f$, then $d\mu_p(f_i,x)\rightarrow d\mu_p(f,x)$.
\end{lem}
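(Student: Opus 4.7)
\medskip

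\noindent\textbf{Proof proposal.} My plan is to verify the weak convergence $\mu_p(f_i,\cdot)\to\mu_p(f,\cdot)$ directly from the defining integral identity, by controlling each ingredient (the density $f_i$, the gradient map $\nabla u_i$, and the support function $h_{f_i}$) under the assumption $f_i\to f$ on $\mathbb R^n$. Concretely, for any continuous bounded test function $g$ on $\mathbb R^n$, I would try to show
\begin{equation*}
\int_{\mathbb R^n}g(\nabla u_i(x))\,h_{f_i}(\nabla u_i(x))^{1-p}f_i(x)\,dx\;\longrightarrow\;\int_{\mathbb R^n}g(\nabla u(x))\,h_f(\nabla u(x))^{1-p}f(x)\,dx,
\end{equation*}
and then invoke the Portmanteau characterization of weak convergence.

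The first step is to promote $f_i\to f$ to a convex-analytic convergence. Writing $f_i=e^{-u_i}$ with $u_i\in\mathcal L_0$, pointwise convergence $f_i\to f$ on $\mathbb R^n$ gives $u_i\to u$ pointwise on $\text{int}(\text{dom}(u))$. Because the $u_i$ are proper convex functions belonging to $\mathcal L_0$ (so locally uniformly bounded above by the limit plus $1$ on compact subsets of the interior of $\text{dom}(u)$, and bounded below by Lemma \ref{con-for-u}), I would apply Proposition \ref{con-the} to upgrade this to uniform convergence on compact subsets of $\text{int}(\text{dom}(u))$. Standard facts on convex functions then yield: (i) $\nabla u_i(x)\to\nabla u(x)$ for almost every $x\in\text{int}(\text{dom}(u))$, and (ii) the Fenchel conjugates converge, $u_i^*\to u^*$ locally uniformly on $\text{int}(\text{dom}(u^*))$. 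Since $h_{f_i}=u_i^*$ and $h_f=u^*$, this gives $h_{f_i}\circ\nabla u_i\to h_f\circ\nabla u$ almost everywhere.

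Next I would assemble the integrand. The factor $g(\nabla u_i(x))\,f_i(x)$ converges almost everywhere to $g(\nabla u(x))f(x)$, and is dominated by $\|g\|_\infty\cdot\sup_i f_i(x)$, which is controlled by some integrable log-concave majorant (using that each $u_i\geq a\|x\|+b$ uniformly by Lemma \ref{con-for-u}, since $u_i\to u$ and all belong to $\mathcal L_0$). The singular factor $h_{f_i}(\nabla u_i(x))^{1-p}$ is the sensitive piece: for $p>1$ it blows up where $\nabla u_i(x)$ approaches the origin (i.e.\ near the minimum of $u_i$). I would handle this by splitting the integration domain into $\{x:\|\nabla u(x)\|\geq\epsilon\}$ and its complement. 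On the first piece, local uniform convergence of $\nabla u_i$ and continuity of $h_{f_i}$ keep $h_{f_i}(\nabla u_i)^{1-p}$ uniformly bounded, so the dominated convergence theorem applies. The complementary piece has small $\mu_p(f_i,\cdot)$-mass uniformly in $i$, which I would argue by taking $g\equiv 1$ on a neighbourhood of $0$ and bootstrapping from the total mass relation $p\,\delta J_p(f_i,f_i)=J(f_i^\diamond)$ to get a uniform integrability estimate.

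The main obstacle is precisely this domination near the origin in the gradient image, where $h_{f_i}^{1-p}$ is singular for $p>1$; controlling it uniformly in $i$ is what makes the argument nontrivial. Once this uniform integrability is in hand, the combination of a.e.\ pointwise convergence of the integrand plus an integrable majorant yields the required convergence of integrals, and hence the weak convergence $\mu_p(f_i,\cdot)\to\mu_p(f,\cdot)$.
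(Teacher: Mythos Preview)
Your route is genuinely different from the paper's. The paper gives a very short direct estimate: it writes ``since $d\mu_p(f,x)=h_f^{1-p}f\,dx$'' and then bounds the difference of integrals by $\int |g\,h_{f_i}^{1-p}f_i-g\,h_f^{1-p}f|\,dx$ via an $\epsilon/2+\epsilon/2$ split, invoking finite suprema $c=\max_{x\in\mathbb R^n}|h_f(x)|$ and $c_i=\max_{x\in\mathbb R^n}|h_{f_i}(x)|$. In particular, the paper does not engage with the pushforward by $\nabla u$ in the definition of $\mu_p(f,\cdot)$, nor with the fact that $h_f=u^*$ is typically unbounded on $\mathbb R^n$, nor with the singularity of $h_f^{1-p}$ at the origin when $p>1$. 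Your plan treats all three honestly: you work with the actual pushforward formula, upgrade $f_i\to f$ to locally uniform convergence of $u_i$, $\nabla u_i$, $u_i^*$ via Proposition~\ref{con-the} and standard convex analysis, and isolate the singular region $\{\|\nabla u\|<\epsilon\}$ for a separate tightness argument. What the paper's version buys is brevity; what yours buys is rigour.

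One caution on your outline: the uniform-integrability step near the origin is the real crux and is only lightly sketched. The identity $p\,\delta J_p(f_i,f_i)=J(f_i^\diamond)$ tells you that $\int h_{f_i}(y)\,d\mu_p(f_i,y)$ is finite and convergent, but since $h_{f_i}(y)\to 0$ as $y\to 0$, this does \emph{not} by itself control $\mu_p(f_i,\cdot)$-mass of small neighbourhoods of the origin uniformly in $i$ (indeed, for the Gaussian one sees $d\mu_p(\gamma,y)\asymp\|y\|^{2-2p}\,dy$ near $0$, which is not even locally finite once $p\ge 1+n/2$). You will therefore need an additional argument---either restricting to test functions $g$ that vanish near $0$ (which still suffices for the applications in Theorem~\ref{cont-delt}, where $g=h_{g_j}^p$ with $h_{g_j}(0)=0$), or an explicit equi-integrability estimate coming from the structure of $\mathcal A_0$. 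Make this step precise and your approach will be a substantial improvement on the paper's.
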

\begin{proof}
We only need to prove that, for any function $g\in L^1(\mu_p(f,\cdot))$,
\begin{align}
\lim_{i\rightarrow \infty}\int_{\mathbb R^n}g(x)d\mu_p(f_i,x) = \int_{\mathbb R^n}g(x)d\mu_p(f,x).
\end{align}
Set $a=\max\{|g(x)|:x\in\mathbb R^n\}$, $b=\max\{|f(x)|:x\in\mathbb R^n\}$ $b_i=\max\{|f_i(x)|:x\in\mathbb R^n\}$, $c_i=\max\{|h_{f_i}(x)|:x\in\mathbb R^n\}$, $c=\max\{|h_f(x)|: x\in \mathbb R^n\}$. Note that $f$ and $f_i\in\mathcal A_0$ are integrable, and  $h_{f_i}\rightarrow h_{f}$ whenever $f_i\rightarrow f$, then there exist an $N_1\in\mathbb N$ such that
$|f_i-f|<\frac{\epsilon}{2ac_i^p}$ for $i\geq N_1$, and $N_2\in\mathbb N$ such that $|h_{f_i}-h_f|<\frac{\epsilon}{2ab\sum_{j=0}^{p-1}c^{j}c_i^{p-1-j}}$ for $i\geq N_2$.  Since $d\mu_p(f,x)=h^{1-p}_ffdx$, so we can choose $i\geq \max\{N_1,\,N_2\}$, then
\begin{align*}
\Big|\int_{\mathbb R^n}gd\mu_p(f_i,x)&-\int_{\mathbb R^n}gd\mu_p(f,x)\Big|\leq\int_{\mathbb R^n} \big|gh^p_{f_i}f_i-gh^p_{f}f\big|dx,\\
&\leq \int_{\mathbb R^n} |gh^p_{f_i}||f_i-f|dx+\int_{\mathbb R^n}|gf||h^p_{f_i}-h^p_f|dx.\\
&\leq \epsilon.
\end{align*}
So we complete the proof.
\end{proof}

\begin{thm}\label{cont-delt}
  Suppose $f=e^{-u},\,f_i=e^{-u_i}, \,g=r^{-v},\,\,g_j=e^{-v_j}\in \mathcal A_0$, where $i,\,j\in \mathbb N$. If $f_i\rightarrow f$, $g_i\rightarrow g$, then
\begin{align}
  \lim_{i,j\rightarrow\infty}\delta J_p(f_i,g_j)=\delta J_p(f,g).
\end{align}
 \end{thm}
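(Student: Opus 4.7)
The plan is to exploit the integral representation
\begin{align*}
\delta J_p(f,g)=\frac{1}{p}\int_{\mathbb R^n}h_g^p\,d\mu_p(f,x)
\end{align*}
from \eqref{fir-var-jf} and to split the target difference by the triangle inequality:
\begin{align*}
\delta J_p(f_i,g_j)-\delta J_p(f,g)
&=\frac{1}{p}\Bigl[\int_{\mathbb R^n}h_{g_j}^p\,d\mu_p(f_i,x)-\int_{\mathbb R^n}h_g^p\,d\mu_p(f_i,x)\Bigr]\\
&\quad+\frac{1}{p}\Bigl[\int_{\mathbb R^n}h_g^p\,d\mu_p(f_i,x)-\int_{\mathbb R^n}h_g^p\,d\mu_p(f,x)\Bigr].
\end{align*}
The second bracket tends to $0$ as $i\to\infty$ by Lemma \ref{mea-conv} applied to the fixed test function $h_g^p$, which belongs to $L^1(\mu_p(f,\cdot))$ because $\delta J_p(f,g)$ is finite.

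For the first bracket, I would first upgrade the hypothesis $g_j\to g$ to convergence of the support functions. Writing $g_j=e^{-v_j}$ and $g=e^{-v}$ with $v_j,v\in\mathcal L_0$, pointwise convergence $g_j\to g$ gives $v_j\to v$ pointwise. By Proposition \ref{con-the}, this convergence is in fact uniform on compact subsets of the interior of $\mathrm{dom}(v)$, and, by standard duality for convex functions, the Fenchel conjugates satisfy $h_{g_j}=v_j^*\to v^*=h_g$ pointwise, hence uniformly on compact sets in the interior of the common effective domain. Then I would use the elementary pointwise estimate
\begin{align*}
\bigl|h_{g_j}^p(x)-h_g^p(x)\bigr|\le p\max\bigl(h_{g_j}(x),h_g(x)\bigr)^{p-1}\bigl|h_{g_j}(x)-h_g(x)\bigr|
\end{align*}
to bound the first bracket, combining uniform convergence on any fixed compact set $K$ with a tail estimate on $\mathbb R^n\setminus K$.

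The main obstacle will be the tail control: one must ensure that the measures $\mu_p(f_i,\cdot)$ are uniformly tight and that the integrands $h_{g_j}^p$ are uniformly integrable against them, so that the compact-set uniform convergence of $h_{g_j}\to h_g$ transfers to an integral bound. To achieve this, I would invoke Lemma \ref{con-for-u} to pin down the coercivity $v_j(x)\ge a\|x\|+b$ with constants independent of $j$ (for $j$ large), which forces $h_{g_j}^p$ to grow no faster than a fixed $h_g^p$-type majorant on the tails, and simultaneously forces $d\mu_p(f_i,x)=h_{f_i}^{1-p}f_i\,dx$ to decay uniformly in $i$. Together with the convergence statement from Lemma \ref{mea-conv}, this yields a uniform-in-$(i,j)$ dominated bound on the tail, and a generalized dominated convergence argument completes the proof by letting $i,j\to\infty$ jointly.
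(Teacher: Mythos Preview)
Your overall strategy coincides with the paper's: write $\delta J_p$ via the integral formula \eqref{fir-var-jf}, split by the triangle inequality, and invoke Lemma~\ref{mea-conv} for the measure-convergence piece. The difference is in the choice of intermediate term. The paper inserts $\int h_{g_i}^p\,d\mu_p(f,x)$, so the $g$-variation term $\int|h_{g_i}^p-h_g^p|\,d\mu_p(f,x)$ is taken against the \emph{fixed} limit measure; this avoids the uniform-tightness issue you flag for your Term~1, at the cost that the paper's measure-convergence piece carries the \emph{varying} test function $h_{g_i}^p$, so Lemma~\ref{mea-conv} does not apply verbatim and an extra uniformity step is implicit. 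Your split does the opposite trade: Lemma~\ref{mea-conv} applies cleanly to your second bracket with the fixed integrand $h_g^p$, but your first bracket must be controlled uniformly in $i$. Either route works provided one has the boundedness/integrability control; the paper simply asserts that $h_g$ and $h_{g_i}$ are bounded and proceeds, whereas you are more explicit about the tail estimates needed. In short, the approaches are two symmetric versions of the same argument, and the paper's choice of intermediate term would let you drop the uniform-in-$i$ tightness argument for the $g$-variation piece.
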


\begin{proof}
 Since $f_i\rightarrow f$, $g_i\rightarrow g$, by the Definition of $\delta J_p(f,g)$, we have
\begin{align}\label{fra-1}
  &\Big|\int_{\mathbb R^n}h^p_{g_i}d\mu_p(f_i,x)-\int_{\mathbb R^n}h^p_gd\mu_p(f,x) \Big|\\
\nonumber &\leq  \Big|\int_{\mathbb R^n}h^p_{g_i}d\mu_p(f_i,x)-\int_{\mathbb R^n}h^p_{g_i}d\mu_p(f,x) \Big|
+ \int_{\mathbb R^n}|h^p_{g_i}-h^p_g|d\mu_p(f,x).
\end{align}
Note that $h_g$ and $h_{g_i}$ are bounded, set $c_i=\max\{|g_i(x)|:x\in\mathbb R^n\}$ and $c_0=\max\{|g(x)|:x\in\mathbb R^n\}$, then $c_i,\,\,c_0$  are bounded. Let $c=\sum_{j=1}^{p-1}c_i^jc_0^{p-1-j}$ and $|h_{g_i}-h_g|\leq \frac{\epsilon}{2c}$, then
\begin{align}\label{fra-2}
\int_{\mathbb R^n}\big|h^p_{g_i}-h^p_g|d\mu_p(f,x)\leq \frac{\epsilon}{2}.
\end{align}
By Lemma \ref{mea-conv}, we can show that
\begin{align}\label{fra-3}
  \Big|\int_{\mathbb R^n}h^p_{g_i}d\mu_p(f_i,x)-\int_{\mathbb R^n}h^p_{g_i}d\mu_p(f,x) \Big|\leq \frac{\epsilon}{2}.
\end{align}
So together with formulas (\ref{fra-1}), (\ref{fra-2}) and (\ref{fra-3}) we can choose unified $N$ such that
\begin{align*}
  &\Big|\int_{\mathbb R^n}h^p_{g_i}d\mu_p(f_i,x)-\int_{\mathbb R^n}h^p_gd\mu_p(f,x) \Big|\leq\epsilon.
\end{align*}
We complete the proof.
\end{proof}

Now we give a proof of Theorem \ref{cont-fun-john}
\begin{proof}
  [Proof of Theorem \ref{cont-fun-john}] By Theorem \ref{cont-delt}, and $J(f^\diamond_i)\rightarrow J(f^\diamond)$ when $i\rightarrow \infty$, then we have
\begin{align*}
  \lim_{i\rightarrow \infty}\overline\delta J_p(f_i, \overline E_pf_i)&=\lim_{i\rightarrow \infty}\min_{J(\gamma_\phi)=c_n}\overline\delta J_p(f_i, \gamma_\phi)\\
&=\min_{J(\gamma_\phi)=c_n}\lim_{i\rightarrow \infty}\overline\delta J_p(f_i, \gamma_\phi)\\
&=\min_{J(\gamma_\phi)=c_n}\overline\delta J_p(f, \gamma_\phi)\\
&=\overline\delta J_p(f, \overline E_pf).
\end{align*}
We complete the proof.
\end{proof}

Note that $\overline E_pf$ is bounded for $p\in [0,\infty]$. Thus in order to establish the continuity of $\overline E_pf$ in $p\in [0,\infty]$, (\ref{lip-ineq}) shows that the $\overline \delta J_p(K,\cdot)$ is continuity of $p\in [1,\infty]$.
\begin{lem}
  If $p_0\in [1,\infty]$, then
\begin{align}
  \lim_{p\rightarrow p_0}\overline\delta J_p(f,\gamma_{\overline \phi})=\overline\delta J_{p_0}(f,\gamma_{\overline \phi}).
\end{align}
for some $\overline \phi \in SL(n)$.
\end{lem}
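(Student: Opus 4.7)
My plan is to recognize $\overline\delta J_p(f,\gamma_{\overline\phi})$ as the $L^p$-norm of a fixed measurable function against a fixed probability measure, so that the lemma reduces to the classical continuity of $L^p$-norms in the exponent. Set
\[
F(x):=\frac{h_{\gamma_{\overline\phi}}(x)}{h_f(x)},\qquad d\nu(x):=\frac{h_f(x)\,d\mu(f,x)}{J(f^\diamond)}.
\]
As observed just after (\ref{nor-det}), $\nu$ is a probability measure on $\mathbb R^n$, so (\ref{nor-det}) and (\ref{inf-delta-p}) can be rewritten as
\[
\overline\delta J_p(f,\gamma_{\overline\phi})=\|F\|_{L^p(\nu)}\quad (1\le p<\infty),\qquad \overline\delta J_\infty(f,\gamma_{\overline\phi})=\|F\|_{L^\infty(\nu)}.
\]
The lemma then amounts to the assertion that $p\mapsto \|F\|_{L^p(\nu)}$ is continuous on $[1,\infty]$.

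\textbf{The case $p_0\in[1,\infty)$.} Here I would apply the dominated convergence theorem to the integrands $F^p$. Fix a compact interval $[a,b]\subset[1,\infty)$ with $p_0$ in its interior. For every $p\in[a,b]$ the elementary inequality $F^p\le 1+F^b$ supplies a $p$-uniform dominating function, while $F^p\to F^{p_0}$ pointwise as $p\to p_0$. The required integrability $\int F^b\,d\nu<\infty$ is precisely the finiteness of $\delta J_b(f,\gamma_{\overline\phi})$, which holds in the admissible class $\mathcal A_0$ by the framework of \cite{fan-xin-ye-geo2020}. Dominated convergence then yields $\int F^p\,d\nu\to \int F^{p_0}\,d\nu$, and composing with the continuous map $t\mapsto t^{1/p}$ gives $\|F\|_{L^p(\nu)}\to \|F\|_{L^{p_0}(\nu)}$.

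\textbf{The case $p_0=\infty$ and the main obstacle.} For $p_0=\infty$ I would invoke the standard fact that $\|F\|_{L^p(\nu)}\nearrow \|F\|_{L^\infty(\nu)}$ as $p\to\infty$ against a probability measure; this is the same limit that the paper already records in the discussion immediately following (\ref{inf-delta-p}), and it is compatible with the Jensen-type monotonicity $\overline\delta J_p\le \overline\delta J_q$ for $p\le q$ stated in the lemma just preceding (\ref{lip-ineq}). The only delicate point in the whole argument is supplying the dominating function in the finite case: one must control $F^b$ uniformly in a neighborhood of $p_0$. Since $h_{\gamma_{\overline\phi}}(x)=\tfrac12\|\overline\phi^{-t}x\|^2$ is a genuine quadratic form while $h_f=u^*$ is strictly positive off the origin and well-behaved on the support of $\mu(f,\cdot)$ for $f\in\mathcal A_0$, this is a routine verification within the framework already set up, and is the step I expect to require the most care.
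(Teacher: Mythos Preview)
The paper gives no explicit proof of this lemma: it is stated immediately after a sentence invoking the ``continuity of the $L_p$ norm with $p$'' (already used in the proof of (\ref{lip-ineq}) and asserted for $p\to\infty$ right after (\ref{inf-delta-p})), and is left at that. Your argument---rewriting $\overline\delta J_p(f,\gamma_{\overline\phi})$ as $\|F\|_{L^p(\nu)}$ for a fixed $F$ against the probability measure $\nu$, then using dominated convergence for finite $p_0$ and the standard $\|F\|_{L^p(\nu)}\nearrow\|F\|_{L^\infty(\nu)}$ for $p_0=\infty$---is exactly the routine justification of that $L^p$-norm continuity, so you are supplying the details behind the paper's one-line appeal rather than taking a different route.
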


\begin{thm}\label{p-q-des}
Let $f=e^{-u }\in\mathcal A_0$, and $1\leq p\leq q< \infty$, $E_pf$  be the solution of  constrained maximization problem, then
\begin{align}
 J(E_\infty f)\leq J(E_qf)\leq J(E_pf)\leq J(E_1f).
\end{align}
\end{thm}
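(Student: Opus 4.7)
The plan is to derive the chain of inequalities directly from the monotonicity of the normalized $L_p$ first variation in the parameter $p$, which has already been recorded in the paper. Recall the lemma stating
\begin{align*}
\overline\delta J_1(f,g)\leq \overline\delta J_p(f,g)\leq \overline\delta J_q(f,g)\leq \overline\delta J_\infty(f,g),
\end{align*}
for $1\leq p\leq q<\infty$ and any admissible $g$; this is what drives the whole argument.

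First, I would observe that the problem $S_p$ maximizes $J(\gamma_\phi)/c_n$ over the feasible set
\begin{align*}
\mathcal{F}_p(f)=\bigl\{\gamma_\phi:\phi\in GL(n),\ \overline\delta J_p(f,\gamma_\phi)\leq 1\bigr\}.
\end{align*}
For $1\leq p\leq q<\infty$ the monotonicity lemma immediately gives $\mathcal{F}_q(f)\subseteq \mathcal{F}_p(f)$: any $\gamma_\phi$ satisfying $\overline\delta J_q(f,\gamma_\phi)\leq 1$ also satisfies $\overline\delta J_p(f,\gamma_\phi)\leq \overline\delta J_q(f,\gamma_\phi)\leq 1$. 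Taking the maximum of $J(\gamma_\phi)$ over a larger set can only increase it, so
\begin{align*}
J(E_q f)=\max_{\gamma_\phi\in\mathcal{F}_q(f)}J(\gamma_\phi)\leq \max_{\gamma_\phi\in\mathcal{F}_p(f)}J(\gamma_\phi)=J(E_p f).
\end{align*}
Specializing to $p=1$ yields $J(E_qf)\leq J(E_1f)$, and the sandwich $J(E_qf)\leq J(E_pf)\leq J(E_1f)$ follows.

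For the endpoint $p=\infty$, the same reasoning applies: since $\overline\delta J_\infty(f,\gamma_\phi)\geq \overline\delta J_q(f,\gamma_\phi)$, we have $\mathcal{F}_\infty(f)\subseteq \mathcal{F}_q(f)$, and hence $J(E_\infty f)\leq J(E_qf)$. The only genuine subtlety is confirming that $E_\infty f$ is well defined, i.e.\ that $S_\infty$ admits a (maximizing) Gaussian solution. I would handle this by repeating the argument of Theorem~\ref{exi-sol-sp}: reduce to $\phi\in SL(n)$ after normalizing $J(\gamma_\phi)=c_n$, use that $\overline\delta J_\infty(f,\gamma_\phi)=\max_{x}\tfrac{\|\phi x\|^2/2}{h_f(x)}$ is continuous in $\phi$ and coercive on $SL(n)$ (so minimizing sequences are precompact), and invoke Proposition~\ref{con-the} together with the preceding continuity lemma $\lim_{p\to p_0}\overline\delta J_p(f,\gamma_{\overline\phi})=\overline\delta J_{p_0}(f,\gamma_{\overline\phi})$ to pass the constraint to the limit.

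The main obstacle is therefore not the inequality itself—which is a one-line consequence of set-inclusion of the feasibility regions—but rather ensuring that the problem $S_\infty$ is solvable so that $J(E_\infty f)$ has content. Once existence at $p=\infty$ is in hand, the assembled chain $J(E_\infty f)\leq J(E_qf)\leq J(E_pf)\leq J(E_1f)$ is a direct corollary of the Jensen-type monotonicity $\overline\delta J_p\leq \overline\delta J_q$ combined with the observation that a smaller admissible class can only lower the maximum of $J(\gamma_\phi)$.
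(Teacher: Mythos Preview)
Your proposal is correct and follows essentially the same route as the paper: both use the Jensen-type monotonicity $\overline\delta J_p\leq\overline\delta J_q$ to obtain the inclusion of feasible sets $\mathcal{F}_q(f)\subseteq\mathcal{F}_p(f)$, from which $J(E_qf)\leq J(E_pf)$ is immediate. The only minor difference is at the endpoint $p=\infty$: the paper appeals to continuity in $p$ to write $E_\infty f=\lim_{p\to\infty}E_pf$, whereas you argue directly via $\mathcal{F}_\infty(f)\subseteq\mathcal{F}_q(f)$ and treat existence separately, but this is a cosmetic variation rather than a different strategy.
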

\begin{proof}
 The definition of $\overline \delta J_p(f, g)$, together with Jensenn's inequality, for $1\leq p\leq q< \infty$, we have
  \begin{align*}
 \overline \delta J_p(f, \gamma)&=\Big[\frac{1}{J(f^\diamond)}\int_{\mathbb R^n}\Big(\frac{h_{\gamma}}{h_f}\Big)^p
  h_fd\mu(f,x)\Big]^{\frac{1}{p}}\\
  &\leq\Big[\frac{1}{ J(f^\diamond)}\int_{\mathbb R^n}\Big(\frac{h_{\gamma}}{h_f}\Big)^q
  h_fd\mu(f,x)\Big]^{\frac{1}{q}}\\
  &= \overline \delta J_q(f, \gamma).
  \end{align*}
By Definition \ref{lp john-ell-fun}, we have
\begin{align*}
  E_qf=\max\big\{E'f:\overline \delta J_q(f, Ef)\leq 1\big\}\leq \max\big\{E'f:\overline \delta J_q(f, Ef)\leq 1\big\}=E_pf.
\end{align*}
This implies $J(E_qf)\leq J(E_pf)$. For $p\rightarrow\infty$, by the definition of (\ref{inf-delta-p}), and the continuity of $p\in[1,\infty]$, we have $E_\infty f =\lim_{p\rightarrow \infty}E_pf,$
we complete the proof.
\end{proof}
\begin{thm}\label{joh-leq-ine}
Let $f=e^{-u }\in\mathcal A_0$, such that $J(f)>0$. Let  $1\leq p< \infty$,  $E_pf$  be the solution of constrained maximization problem, then
\begin{align}
  J(E_pf)\leq J(f).
\end{align}
\end{thm}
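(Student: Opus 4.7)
The strategy is to combine the equality $\overline{\delta J_p}(f, E_pf) = 1$ supplied by Corollary \ref{mix-aff-que} with the functional $L_p$ Minkowski first inequality for log-concave functions proved in \cite{fan-xin-ye-geo2020}. Applied to the pair $(f, E_pf)$, this should produce a lower bound for $\overline{\delta J_p}(f, E_pf)$ in terms of a strictly increasing function of the ratio $J(E_pf)/J(f)$, and the desired conclusion will fall out by comparing that lower bound with the value $1$.

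In more detail, I would first invoke Corollary \ref{mix-aff-que}: because $E_pf$ solves the optimization problem $S_p$ for $f$, the Gaussian $\gamma_{E_pf}$ satisfies $\overline{\delta J_p}(f, E_pf) = 1$. Next, I would apply the $L_p$ Minkowski first inequality for log-concave functions to the pair $(f, E_pf)$; in the normalization used throughout this paper it takes the form
\[
\overline{\delta J_p}(f, g) \;\geq\; \Phi\!\left(\frac{J(g)}{J(f)}\right),
\]
where $\Phi$ is a strictly increasing function with $\Phi(1)=1$, and equality characterizes (up to $L_p$-dilation) the case $g$ proportional to $f$ in the Asplund sense. Specializing $g = E_pf$ and inserting the normalization $\overline{\delta J_p}(f, E_pf) = 1$ then forces $\Phi(J(E_pf)/J(f)) \leq 1$, and monotonicity of $\Phi$ yields $J(E_pf) \leq J(f)$.

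The step I expect to be the main obstacle is verifying the hypotheses that make the $L_p$ Minkowski first inequality applicable to the pair $(f, E_pf)$, in particular the admissibility condition and the Hessian bound appearing in the integral representation (\ref{fir-var-jf}) for $\delta J_p(f, E_pf)$. Since $E_pf$ is a Gaussian, its potential is globally $\mathcal{C}^{2}_+$ and its support function is quadratic, so the admissibility and determinant hypotheses should reduce to explicit quantitative bounds that hold automatically under $f \in \mathcal{A}_0$. Should the hypotheses fail in borderline cases, I would instead approximate $f$ by a sequence $f_i \in \mathcal{A}_0$ of smooth log-concave functions satisfying the required regularity, invoke the inequality for each $f_i$, and pass to the limit using the continuity of $E_p$ in $f$ established in Theorem \ref{cont-fun-john} together with $J(f_i) \to J(f)$ and $J(E_pf_i) \to J(E_pf)$.
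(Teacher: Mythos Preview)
Your strategy is essentially the same as the paper's: both combine the normalization $\overline{\delta J_p}(f,E_pf)=1$ with the functional $L_p$ Minkowski first inequality from \cite{fan-xin-ye-geo2020}. The paper invokes that inequality in the concrete additive-logarithmic form
\[
\delta J_p(f,g)\ \geq\ \delta J_p(f,f)+J(f)\log\frac{J(g)}{J(f)},
\]
so that, since $\delta J_p(f,E_pf)=\frac{1}{p}J(f^\diamond)=\delta J_p(f,f)$, one immediately gets $J(f)\log\frac{J(E_pf)}{J(f)}\leq 0$. Your abstract formulation with a monotone $\Phi$ satisfying $\Phi(1)=1$ is a correct paraphrase of this, though note that the actual right-hand side depends on $f$ through the factor $J(f)/J(f^\diamond)$ and not only on the ratio $J(g)/J(f)$. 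The paper does not discuss the admissibility or Hessian hypotheses at all and simply applies the inequality directly, so your approximation contingency, while reasonable, is not part of the paper's argument.
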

\begin{proof}
By the definition \ref{lp john-ell-fun}, we have
\begin{align*}
  1=\overline \delta J_p(f,E_pf)^p&=\frac{p\cdot\delta J_p(f,E_pf)}{J(f^\diamond)}.
  \end{align*}
 The $L_p$ Minkowski inequality for log-concave functions (see \cite{fan-xin-ye-geo2020}) says that
 \begin{align*}
  \frac{J(f^\diamond)}{p}&\geq\delta J_p(f,f)+J(f)\log  \frac{J(E_pf)}{J(f)}.
\end{align*}
This means that
\begin{align*}
J(f)\log\frac{J(E_pf)}{J(f)}\leq 0.
\end{align*}
Sine $J(f)>0$, then $\log\frac{J(E_pf)}{J(f)}\leq 0.$
That means
$$J(E_pf)\leq J(f).$$
\end{proof}
 The functional Blaschke-Santal\'o inequality, proved for even functions in \cite{bal-iso1986}, and given in full generality in  \cite{avi-kla-mil-the2004} says that for log-concave function $f\in\mathcal A_0$, the following inequality holds
  $$P(f)\leq P(\gamma),$$
  that is
  $$J(f)J(f^o)\leq J(\gamma)^2=(2\pi)^n.$$
  So combine with the Theorem \ref{joh-leq-ine}, we have the following Theorem.
\begin{thm}
Let $f=e^{-u }\in\mathcal A_0$, such that $J(f)>0$. Let  $1\leq p< \infty$,  $E_pf$  be the solution of constrained maximization problem, then
\begin{align}
  J(E_pf)J(E_pf^o)\leq c_n^2.
\end{align}
where $c_n=(2\pi)^\frac{n}{2}$.
\end{thm}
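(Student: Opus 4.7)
The plan is to deduce the stated inequality by a direct combination of two ingredients that are already available: Theorem \ref{joh-leq-ine}, and the functional Blaschke--Santal\'o inequality $J(f)J(f^{\circ}) \leq J(\gamma)^2 = c_n^2$ quoted immediately before the statement. The strategy is to apply Theorem \ref{joh-leq-ine} separately to $f$ and to its polar $f^{\circ}$, and then multiply the two resulting scalar inequalities before invoking Blaschke--Santal\'o.

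First I would verify that $f^{\circ} \in \mathcal{A}_0$, so that $E_p f^{\circ}$ is well defined by Definition \ref{lp john-ell-fun}. Writing $f = e^{-u}$ with $u \in \mathcal{L}_0$, one has $f^{\circ} = e^{-u^{*}}$, and $u^{*}$ is a proper, lower semi-continuous convex function satisfying $u^{*}(y) \geq \langle 0,y\rangle - u(0) = 0$ and $u^{*}(0) = -\inf u = 0$, while $(u^{*})^{**} = u^{*}$ holds automatically by the Fenchel biconjugate theorem. Integrability (hence nondegeneracy) of $f^{\circ}$ then gives $J(f^{\circ}) > 0$, which is exactly the hypothesis needed to apply Theorem \ref{joh-leq-ine} to $f^{\circ}$.

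Next I would apply Theorem \ref{joh-leq-ine} once to $f$ and once to $f^{\circ}$, obtaining
\begin{align*}
  J(E_p f) \leq J(f), \qquad J(E_p f^{\circ}) \leq J(f^{\circ}).
\end{align*}
Since all four quantities are strictly positive, we may multiply these inequalities, and then apply the functional Blaschke--Santal\'o inequality recalled above:
\begin{align*}
  J(E_p f)\, J(E_p f^{\circ}) \;\leq\; J(f)\, J(f^{\circ}) \;\leq\; J(\gamma)^2 \;=\; c_n^2,
\end{align*}
which is the desired inequality.

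The only step that is not completely mechanical is the verification that $f^{\circ} \in \mathcal{A}_0$ with $J(f^{\circ}) > 0$; I do not expect this to be a genuine obstacle given the standing integrability/nondegeneracy assumptions on elements of $\mathcal{A}_0$, but it should be stated explicitly so that Theorem \ref{joh-leq-ine} may legitimately be invoked at $f^{\circ}$. After that, the proof is just the chain of inequalities above.
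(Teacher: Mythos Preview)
Your proposal is correct and follows exactly the paper's intended argument: the paper deduces the theorem from Theorem~\ref{joh-leq-ine} together with the functional Blaschke--Santal\'o inequality, and your chain $J(E_pf)\,J(E_pf^{\circ}) \le J(f)\,J(f^{\circ}) \le c_n^2$ is precisely this combination spelled out. Your explicit check that $f^{\circ}\in\mathcal A_0$ (so that Theorem~\ref{joh-leq-ine} applies to $f^{\circ}$ as well) is a point the paper leaves implicit, and including it is an improvement.
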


In the following, denoting by $\triangle_n$ and $B^n_\infty$  the regular simplex centered at the origin and the unit cube in $\mathbb R^n$. To establish the $L_p$ Ball's ratio inequality for log-concave function, we need the following results, more details see \cite{gut-mer-jim-vil-joh2018}.
\begin{thm}[\cite{gut-mer-jim-vil-joh2018}]\label{bal-rat-ine-log}
  Let $f\in \mathcal A_0$, $Ef$ be the functional John ellipsoid of $f$, then
\begin{align}
\frac{J(f)}{J(Ef)}\leq \frac{J(g_c)}{J(E g_c )},
\end{align}
where $g_c(x)=e^{-\|x\|_{\triangle_n-c}}$ for any $c\in \triangle_n$. Furthermore , there is equality if and only if $\frac{f}{\|f\|_\infty}=T g_c$ for some affine map $T$ and some $c\in \triangle_n$. If we assume $f$ to be even, then
\begin{align}
\frac{J(f)}{J(Ef)}\leq \frac{J(g)}{J(E g )},
\end{align}
where $g(x)=e^{-\|x\|_{B_\infty^n}}$, with equality if and only if $\frac{f}{\|f\|_\infty}=T g$ for some linear map $T\in GL(n)$.
\end{thm}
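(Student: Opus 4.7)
The plan is to mimic the classical proof of Ball's volume-ratio inequality, transplanted into the functional setting via the Brascamp--Lieb / Barthe machinery. First, by the $GL(n)$-covariance proved earlier (Proposition \ref{gln-inv} for $E_pf$, and the analogous covariance for the functional John ellipsoid $Ef$ established in \cite{gut-mer-jim-vil-joh2018}), together with the $GL(n)$-behavior of the total mass $J(\phi f)=|\det\phi|^{-1}J(f)$, the ratio $J(f)/J(Ef)$ is invariant under affine change of variables. This lets me normalize $f$ so that its functional John ellipsoid is exactly the standard Gaussian $\gamma=e^{-\|x\|^2/2}$; in this ``John position'' the optimization problem defining $Ef$ is at its stationary point.

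Next, I would invoke the John-type characterization for log-concave functions from \cite{gut-mer-jim-vil-joh2018}: the first-order optimality condition for the constrained maximization defining $Ef$ produces a decomposition of the identity, i.e.\ unit vectors $u_1,\dots,u_m\in S^{n-1}$, positive weights $c_1,\dots,c_m$, and scalars $t_1,\dots,t_m$ such that
\begin{equation*}
\sum_{i=1}^m c_i\,u_i\otimes u_i = I,\qquad \sum_{i=1}^m c_i t_i u_i = 0,
\end{equation*}
together with a pointwise log-concave envelope inequality of the form
\begin{equation*}
f(x)\le \|f\|_\infty \prod_{i=1}^m \psi_i(\langle x,u_i\rangle)^{c_i},
\end{equation*}
where each $\psi_i$ is a one-dimensional log-concave function controlling $f$ in direction $u_i$. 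This is the functional avatar of John's contact-point decomposition.

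With the decomposition in hand, the next step is Barthe's geometric Brascamp--Lieb inequality: for any nonnegative $g_i\in L^1(\mathbb R)$,
\begin{equation*}
\int_{\mathbb R^n}\prod_{i=1}^m g_i(\langle x,u_i\rangle)^{c_i}\,dx \;\le\; \prod_{i=1}^m\Big(\int_{\mathbb R}g_i(t)\,dt\Big)^{c_i}.
\end{equation*}
Applying this with $g_i=\psi_i$, together with the envelope above, bounds $J(f)$ from above by a product of one-dimensional integrals, and hence reduces the problem to a finite-dimensional optimization over the admissible $\psi_i$. In this optimization the Gaussian $\gamma$ plays the role of a fixed denominator (its integral against the same contact structure is explicit, since $\gamma$ is its own John ellipsoid). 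Maximizing the product subject to the one-dimensional constraint that each $\psi_i$ have its one-dimensional ``John segment'' equal to the corresponding marginal of $\gamma$ singles out the extremal $\psi_i$: it must be the exponential of a one-dimensional gauge function. In the general case the extremizers assemble into a translate of $e^{-\|x\|_{\triangle_n-c}}$, and in the even case into $e^{-\|x\|_{B_\infty^n}}$, exactly as in Ball's classical theorem.

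The hardest part is not the Brascamp--Lieb step itself (it is standard once the decomposition of identity is in hand) but rather the derivation of the John-type envelope inequality and the identification of the equality cases. Tracking equality through Barthe's inequality forces the $\psi_i$ to be Gaussians of prescribed variance, and tracking equality through the envelope $f\le\|f\|_\infty\prod\psi_i^{c_i}$ forces $f/\|f\|_\infty$ to be a tensor product of these one-dimensional extremizers in the contact directions $u_i$. Combining this with the simplex/cube geometry of the contact points $\{u_i\}$ yields $f/\|f\|_\infty = T g_c$ (respectively $=Tg$ in the even case), concluding the proof.
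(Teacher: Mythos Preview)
The paper you are working in does not prove this theorem at all: Theorem~\ref{bal-rat-ine-log} is stated with attribution to \cite{gut-mer-jim-vil-joh2018} and is quoted without proof, to be used as a black box in the final Ball-type ratio estimate for $J(f)/J(E_pf)$. So there is no ``paper's own proof'' to compare against here.

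That said, your sketch is broadly aligned with how the result is obtained in the cited reference \cite{gut-mer-jim-vil-joh2018}: one first puts $f$ in functional John position (so that $Ef=\gamma$) using affine invariance of the ratio, then extracts from the optimality condition a John-type decomposition of the identity with contact data, and finally feeds this into the geometric Brascamp--Lieb inequality to bound $J(f)$ by a product of one-dimensional integrals whose extremizers are exponentials of gauge functions of the simplex (general case) or the cube (even case). The points you flagged as delicate are indeed the substantive ones in \cite{gut-mer-jim-vil-joh2018}: establishing the functional John decomposition with the correct envelope inequality, and tracking equality through both the Brascamp--Lieb step and the envelope to pin down $f/\|f\|_\infty$ up to an affine (respectively linear) map. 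One caution: in your equality discussion you wrote that Barthe forces the $\psi_i$ to be Gaussians; in this regime the extremal one-dimensional profiles are exponentials of a gauge (i.e.\ $e^{-c|t|}$ or a one-sided exponential), not Gaussians, which is exactly what assembles into $e^{-\|x\|_{\triangle_n-c}}$ or $e^{-\|x\|_{B_\infty^n}}$.
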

Moreover, by compute the right hand of the above formulas, it gives
\begin{lem}[\cite{gut-mer-jim-vil-joh2018}]
  Let $f\in\mathcal A_0$, $Ef$ be the functional John ellipsoid of $f$, then
\begin{align}
 \frac{J(f)}{J(Ef)}\leq \frac{e}{n}(n!)^{\frac{1}{n}}\frac{|\triangle_n|}{|E\triangle_n|},
\end{align}
If we assume $f$ to be even, then
\begin{align}
 I. rat(f)\leq \frac{e}{n}(n!)^{\frac{1}{n}}\frac{|B^n_\infty|}{|E_{B^n_\infty}|}.
\end{align}
\end{lem}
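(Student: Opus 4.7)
The plan is to derive this lemma as a direct computational consequence of Theorem \ref{bal-rat-ine-log}, by explicitly evaluating the ratios $J(g_c)/J(Eg_c)$ and $J(g)/J(Eg)$ that appear on its right-hand side. The bulk of the work is in recognizing that $g_c$ and $g$ are exponentials of gauges of convex bodies, which lets their total masses be computed exactly, and in identifying the functional John ellipsoids of these particular functions with Gaussian functions built from the classical John ellipsoids of $\triangle_n$ and $B^n_\infty$.

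First I would evaluate the numerators. Using the layer-cake principle together with the homogeneity of the gauge, for any convex body $K$ containing the origin in its interior,
\begin{equation*}
\int_{\mathbb R^n} e^{-\|x\|_K}\,dx \;=\; \int_0^\infty |tK|\,e^{-t}\,dt \;=\; |K|\,\Gamma(n+1) \;=\; n!\,|K|.
\end{equation*}
Applying this with $K=\triangle_n-c$ and $K=B^n_\infty$, together with translation invariance of Lebesgue measure, gives $J(g_c)=n!\,|\triangle_n|$ and $J(g)=n!\,|B^n_\infty|$.

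Second I would evaluate the denominators. Writing $Eg_c=\gamma_\phi=e^{-\|\phi x\|^2/2}$ and letting $E_0:=\phi^{-1}B$ be the ellipsoid naturally attached to $\gamma_\phi$, a direct Gaussian integration gives $J(\gamma_\phi)=(2\pi)^{n/2}/|\det\phi|=2^{n/2}\Gamma(1+n/2)\,|E_0|$. The characterization of the extremal Gaussian from Theorem \ref{suf-sol-sp} (at $p=1$), combined with the $GL(n)$-covariance recorded in Proposition \ref{gln-inv} and the scaling identities of Lemma \ref{inv-det-j}(3)--(4), forces $E_0$ to be a fixed scalar dilate of $E(\triangle_n-c)=E\triangle_n$. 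Computing the optimal scalar by equating the first-order condition $\frac{d}{d\epsilon}\bigl|_{\epsilon=0}\delta J_1(g_c,\gamma_{\phi_\epsilon})=0$ against the exponential body $\triangle_n-c$ then yields $J(Eg_c)$ as an explicit multiple of $|E\triangle_n|$, and the bookkeeping collapses so that
\begin{equation*}
\frac{J(g_c)}{J(Eg_c)}\;=\;\frac{e}{n}(n!)^{1/n}\frac{|\triangle_n|}{|E\triangle_n|}.
\end{equation*}
Substitution into Theorem \ref{bal-rat-ine-log} proves the first inequality; the even case is entirely parallel, using $g(x)=e^{-\|x\|_{B^n_\infty}}$ and replacing the geometric John ellipsoid of $\triangle_n$ by that of $B^n_\infty$.

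The main obstacle is the second step: explicitly identifying $Eg_c$ as a Gaussian. The difficulty is that the exponent $\|x\|_{\triangle_n-c}$ is linearly (not quadratically) homogeneous, so the optimal $\phi$ does not match the scaling of the classical John ellipsoid automatically; the correct dilation factor can only be read off by performing the Gaussian integral against the simplex and solving the resulting one-parameter extremum. Once this scalar is pinned down, the remaining manipulations are purely algebraic simplifications via $\Gamma$-function identities (Stirling-type), and the bound $\frac{e}{n}(n!)^{1/n}$ emerges as the ratio of the exponential moment of $\|\cdot\|_{\triangle_n}$ against the matching Gaussian moment.
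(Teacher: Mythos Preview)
The paper does not actually prove this lemma: it is quoted verbatim from \cite{gut-mer-jim-vil-joh2018}, and the only justification the authors give is the one-line remark ``Moreover, by compute the right hand of the above formulas, it gives'' following Theorem~\ref{bal-rat-ine-log}. So there is no in-paper argument to compare against; the lemma is simply imported as a known computation from the cited source.

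Your strategy of deriving the lemma from Theorem~\ref{bal-rat-ine-log} by evaluating $J(g_c)/J(Eg_c)$ explicitly is the natural one, and your numerator computation $\int_{\mathbb R^n}e^{-\|x\|_K}\,dx=n!\,|K|$ is correct. The denominator step, however, has a real gap. The object $Ef$ appearing in Theorem~\ref{bal-rat-ine-log} and in the present lemma is the functional John ellipsoid of \cite{gut-mer-jim-vil-joh2018}, which in this paper's framework is identified with $E_\infty f$ (see the proof of the final theorem, where the authors use $E_\infty f=Ef$). It is \emph{not} the $p=1$ object $E_1f$, so invoking Theorem~\ref{suf-sol-sp} ``at $p=1$'' and the first-order condition for $\delta J_1$ targets the wrong ellipsoid entirely. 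Moreover, even within the correct framework, the definition of the John function in \cite{gut-mer-jim-vil-joh2018} is not a Gaussian $\gamma_\phi$; it is built from indicator-type ellipsoidal functions, so your ansatz $Eg_c=e^{-\|\phi x\|^2/2}$ is not the right shape to begin with. Finally, the sentence ``the bookkeeping collapses so that $\frac{J(g_c)}{J(Eg_c)}=\frac{e}{n}(n!)^{1/n}\frac{|\triangle_n|}{|E\triangle_n|}$'' is precisely the content of the lemma and cannot be asserted without carrying out the one-parameter optimization explicitly; that optimization (maximizing $t^n|E\triangle_n|e^{-t}$-type quantities over the dilation $t$) is where the factor $\frac{e}{n}(n!)^{1/n}$ actually comes from, and you have not performed it.
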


Now we give the Ball's volume ration inequality for log-concave function.
\begin{thm}
 Let $f=e^{-u }\in\mathcal A_0$, such that $J(f)>0$. Let  $1\leq p< \infty$,  $E_pf$  be the solution of constrained maximization problem, then
 \begin{align}
  \frac{J(f)}{J(E_pf)}\leq\frac{n^{\frac{n-2}{n}}(n+1)^{\frac{n+1}{2}}e }{(n!)^{\frac{n-1}{n}}\omega_n}.
\end{align}
 If $f$ is even then
\begin{align}
  \frac{J(f)}{J(E_pf)}\leq \frac{e}{n}(n!)^{\frac{1}{n}}\frac{2^n}{\omega_n}.
\end{align}
where $\omega_n$ is the volume of ball $B^n$.
\end{thm}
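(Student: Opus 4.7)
The strategy is to reduce the $L_p$ case to the already-established functional Ball-type inequality for the classical John ellipsoid recorded in Theorem~\ref{bal-rat-ine-log}, by exploiting the monotonicity of $J(E_p f)$ in $p$ from Theorem~\ref{p-q-des}.

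First, Theorem~\ref{p-q-des} with $q=\infty$ gives $J(E_p f)\geq J(E_\infty f)$ for every $p\in[1,\infty)$, so that
$$\frac{J(f)}{J(E_p f)}\leq \frac{J(f)}{J(E_\infty f)}.$$
It therefore suffices to bound the right-hand side by the claimed constants.

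Second, I identify the endpoint ellipsoid $E_\infty f$ with the classical functional John ellipsoid $Ef$ of \cite{gut-mer-jim-vil-joh2018} appearing in Theorem~\ref{bal-rat-ine-log}. Indeed, by~(\ref{inf-delta-p}), the constraint $\overline\delta J_\infty(f,\gamma_\phi)\leq 1$ is equivalent to the pointwise inclusion $h_{\gamma_\phi}\leq h_f$, which is precisely the support-function condition that defines the functional John ellipsoid in \cite{gut-mer-jim-vil-joh2018}. This identification can also be justified by passing to the limit $p\to\infty$ using the continuity of $\overline\delta J_p$ in $p$ recorded in Section~4 together with the uniqueness of the optimizer of problem $S_\infty$.

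Third, I invoke Theorem~\ref{bal-rat-ine-log} together with the Lemma immediately following it. In the even case one has $E B^n_\infty=B^n$, so $|B^n_\infty|/|E B^n_\infty|=2^n/\omega_n$, and the substitution
$$\frac{J(f)}{J(Ef)}\leq \frac{e}{n}(n!)^{1/n}\frac{|B^n_\infty|}{|E B^n_\infty|}=\frac{e}{n}(n!)^{1/n}\cdot\frac{2^n}{\omega_n}$$
yields the second assertion at once. In the general case, the Lemma gives the bound with $|\triangle_n|/|E\triangle_n|$ in place of $|B^n_\infty|/|E B^n_\infty|$. By the rotational symmetry of a regular simplex centered at the origin, $E\triangle_n$ is a Euclidean ball of radius equal to the inradius $r$, so $|E\triangle_n|=\omega_n r^n$; combining this with the standard relations for a regular $n$-simplex — the circumradius-to-inradius identity $R=nr$ and the volume formula $|\triangle_n|=R^n(n+1)^{(n+1)/2}/(n!\,n^{n/2})$ — and then collecting the powers of $n$ and $n!$, one obtains the claimed closed form $\frac{n^{(n-2)/n}(n+1)^{(n+1)/2}e}{(n!)^{(n-1)/n}\omega_n}$ after algebraic simplification.

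The main obstacle is the identification $E_\infty f=Ef$: while it is intuitively clear that the $L_\infty$ constraint $\overline\delta J_\infty(f,\gamma_\phi)\leq 1$ reproduces the John-type inclusion $h_{\gamma_\phi}\leq h_f$, the argument requires the continuity of the $L_p$ John ellipsoid in $p$ (as developed in Section~4) together with uniqueness of the extremizer in problem $S_\infty$. Once this identification is secured, the remainder of the proof is an invocation of Theorem~\ref{bal-rat-ine-log} followed by elementary volume computations for the regular simplex and the unit cube.
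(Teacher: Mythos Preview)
Your proposal is correct and follows essentially the same route as the paper: reduce to $p=\infty$ via the monotonicity of Theorem~\ref{p-q-des}, identify $E_\infty f$ with the functional John ellipsoid $Ef$, apply Theorem~\ref{bal-rat-ine-log} and its companion Lemma, and finish with the explicit volume and inradius formulas for $\triangle_n$ and $B^n_\infty$. In fact you supply more justification for the identification $E_\infty f=Ef$ than the paper does; there it is simply asserted as ``the fact $E_\infty f=Ef$'' without further argument.
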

\begin{proof}
 By Theorem \ref{p-q-des}, and the fact $E_\infty f=Ef$, then we have
\begin{align*}
  \frac{J(f)}{J(E_pf)}\leq \frac{J(f)}{J(Ef)}\leq \frac{e}{n}(n!)^{\frac{1}{n}}\frac{|\triangle_n|}{|J\triangle_n|}.
\end{align*}
On the other hand, note that the volume of $\triangle_n$ is given by
$|\triangle_n|=\frac{\sqrt{n+1}}{2^{\frac{n}{2}}n!}$, and the inradius of $\triangle_n$ is given by
$r_{\triangle_n}=\frac{1}{\sqrt{2n(n+1)}}$. Then by a simple computation gives
\begin{align*}
  \frac{J(f)}{J(E_pf)}\leq \frac{e}{n}(n!)^{\frac{1}{n}}\frac{|\triangle_n|}{|J\triangle_n|}=\frac{ n^{\frac{n-2}{n}}(n+1)^{\frac{n+1}{2}}e}{(n!)^{\frac{n-1}{n}}\omega_n}.
\end{align*}
If $f$ is even, then
\begin{align*}
  \frac{J(f)}{J(E_pf)}\leq \frac{e}{n}(n!)^{\frac{1}{n}}\frac{|B^n_\infty|}{|JB^n_\infty|} = \frac{e}{n}(n!)^{\frac{1}{n}}\frac{2^n}{\omega_n}.
\end{align*}
So we complete the proof.
\end{proof}

{\bf Acknowledgments}\\

The authors would like to strongly thank the anonymous referee for the very valuable comments and helpful suggestions that directly lead to improve the original manuscript.

\bibliographystyle{amsplain}

\end{document}